\newtheorem{thm}{Theorem}[section]
\newtheorem{mrem}{Remark}
\newtheorem{mthm}{Theorem}
\newtheorem{mconjecture}{Conjecture}
\newtheorem{lem}[thm]{Lemma}
\theoremstyle{definition}
\theoremstyle{remark}
\numberwithin{equation}{section}
\begin{document}
\title[The fractional p-Laplacian]{Maximum principles and monotonicity of solutions for fractional p-equations in unbounded domains}

\author{Zhao Liu$^\dag$, Wenxiong Chen $^*$$^\ddag$}

\address{$^\dag$ School of Mathematics and Computer Science\\Jiangxi Science and Technology Normal University\\Nanchang 330038, P. R. China
and Department of Mathematical Sciences\\Yeshiva University\\New York, NY, USA}
\email{liuzhao@mail.bnu.edu.cn}

\address{$^\ddag$Department of Mathematical Sciences\\Yeshiva University\\New York, NY,10033 USA}
\email{wchen@yu.edu}

\thanks{$^*$ Corresponding author: Wenxiong Chen at wchen@yu.edu. \\
Zhao Liu is supported by the NNSF of China (No. 11801237).}

\begin{abstract}
In this paper, we consider the following non-linear equations in unbounded domains $\Omega$ with exterior Dirichlet condition:
\begin{equation*}\begin{cases}
(-\Delta)_p^s u(x)=f(u(x)), & x\in\Omega,\\
u(x)>0, &x\in\Omega,\\
u(x)\leq0, &x\in \mathbb{R}^n\setminus \Omega,
\end{cases}\end{equation*}
where $(-\Delta)_p^s$
is the fractional p-Laplacian defined as
\begin{equation}
(-\Delta)_p^s u(x)=C_{n,s,p}P.V.\int_{\mathbb{R}^n}\frac{|u(x)-u(y)|^{p-2}[u(x)-u(y)]}{|x-y|^{n+s p}}dy
\label{0}
\end{equation}
with $0<s<1$ and $p\geq 2$.

We first establish a {\em maximum principle in unbounded domains involving the fractional p-Laplacian} by estimating the singular integral in (\ref{0}) along a sequence of approximate maximum points.  Then, we
obtain the asymptotic behavior of solutions far away from the boundary. Finally, we develop a sliding method for the fractional p-Laplacians and apply it to derive the monotonicity and uniqueness of solutions.

There have been similar results for the regular Laplacian \cite{BCN1} and for the fractional Laplacian \cite{DSV}, which are linear operators. Unfortunately, many approaches there no longer work for the fully non-linear fractional p-Laplacian here. To circumvent these difficulties, we introduce several new ideas,
which enable us not only to deal with non-linear non-local equations, but also to remarkably weaken the conditions on $f(\cdot)$ and on the domain $\Omega$.

We believe that the new methods developed in our paper can be widely applied to many problems in unbounded domains involving non-linear non-local operators.
\end{abstract}
\maketitle {\small {\bf Keywords:} Fractional p-Laplacians, maximum principles, unbounded domains, sliding methods, asymptotic behavior, monotonicity of solutions, uniqueness.\\

{\bf 2010 MSC} Primary: 35R11, 35J91; Secondary: 35B06, 35B65.}

\section{Introduction}

In this paper, we investigate qualitative properties of solutions for the nonlinear problem
\begin{equation}\label{fractional000}\begin{cases}
(-\Delta)_p^s u(x)=f(u(x)), &x\in\Omega,\\
u(x)>0, &x\in\Omega,\\
u(x)\leq0, &x\in \mathbb{R}^n\setminus \Omega,
\end{cases}\end{equation}
where $\Omega$ is the region above the graph of a continuous function $\varphi:\mathbb{R}^{n-1}\rightarrow\mathbb{R}$, i.e.
\begin{equation*}
\Omega:=\{x=(x',x_n)\in\mathbb{R}^n\mid x_n>\varphi(x')\} \ \ \text{with}\ \ x'=(x_1,x_2,\cdots,x_{n-1})\in \mathbb{R}^{n-1}.
\end{equation*}

$(-\Delta)_p^s$  is the fractional p-Laplacian defined as
\begin{equation*}\begin{split}
(-\Delta)_p^s u(x)&=C_{n,s,p}P.V.\int_{\mathbb{R}^n}\frac{|u(x)-u(y)|^{p-2}[u(x)-u(y)]}{|x-y|^{n+s p}}dy\\
&=C_{n,s,p}\lim_{\varepsilon\rightarrow0}\int_{\mathbb{R}^n\setminus {B_\varepsilon(x)}}\frac{|u(x)-u(y)|^{p-2}[u(x)-u(y)]}{|x-y|^{n+s p}}dy,
\end{split}\end{equation*}
where P.V. stands for the Cauchy principal value, and throughout this paper, we assume that $s\in(0,1)$ and $p\geq 2$.

In order the integral to make sense, we require that
$$u\in C_{loc}^{1,1}\cap \mathcal{L}_{sp}$$
with
$$\mathcal{L}_{sp}:=\{u\in L_{loc}^{p-1}\mid \int_{\mathbb{R}^n}\frac{|u(x)|^{p-1}}{1+|x|^{n+sp}}dx<\infty\}.$$

In the special case when $p=2$, $(-\Delta)^s_p$
becomes the well-known fractional Laplacian
 $(-\Delta)^s$. And one can show that, as $s \rightarrow 1$, the fractional p-Laplacian converges to the regular p-Laplacian:
 $$ (-\Delta)^s_p u(x) \rightarrow -\Delta_p u(x) := - div(|\bigtriangledown u(x)|^{p-2} \bigtriangledown u(x)).$$

 The non-local nature of these operators make them difficult to study. To circumvent this,
 Caffarelli and Silvestre \cite{CS} introduced the extension method which turns the
non-local problem involving the fractional Laplacian into a local one in higher dimensions.
This method has been applied successfully to study equations involving the fractional
Laplacian, and a series of fruitful results have been obtained (see \cite{BCDS,CZ} and the references therein).
One can also use the integral equations method, such as the method of
moving planes in integral forms and regularity lifting to investigate equations involving
the fractional Laplacian by first showing that they are equivalent to the corresponding
integral equations (see \cite{CFY,CLO,CLO1}).

\medskip
However,
so far as we know, besides the fractional Laplacian, there has not been any extension
methods that works for other non-local operators, such as the uniformly elliptic
non-local operators and fully non-linear non-local operators
(see \cite{CS1} for the introductions of these operators) including
the fractional p-Laplacian.
In \cite{CLL}, Chen, Li and Li introduced the direct method of moving planes for the fractional Laplacian which has been applied to obtain symmetry, monotonicity, and
non-existence of solutions for various semi-linear equations involving the fractional Laplacian.
In \cite{CLLG}, Chen, Li and Li refined this direct approach, so
that it can be applied to fully nonlinear nonlocal problem in the case the operator is
non-degenerate in certain sense. In order to investigate the degenerate fractional
p-Laplacian, Chen and Li \cite{CL2} introduced some new ideas, among which
a significant one is a variant of the Hopf Lemma, the key boundary estimate,
which plays the role of the narrow region principle in the second step of the method of
moving planes.
For more applications about this direct method for various non-local problems,
please see \cite{CLL1,CLZ} and the references therein.

\medskip

It is well-known that \emph{maximum principles} play  fundamental roles in the study of elliptic
partial differential equations, it is also a powerful tool in carrying out
the \emph{method of moving planes} to derive symmetry, monotonicity, and non-existence
of solutions. Recently, due to their broad applications to various branches of
sciences, a lot of attention has been turned to the
non-linear equations involving fractional Laplacians and other non-local operators, including the fully non-linear non-local fractional p-Lapcians. In order to further investigate these non-local equations, here we establish a fractional p-Laplacian version of the \emph{maximum principle}
in unbounded domains without assuming any asymptotic behavior of the solutions near infinity, which can be applied to establish
qualitative properties, such as symmetry and monotonicity for solutions of
fractional p-Laplacian equations.

\medskip
Our first result is the following.
\begin{mthm}\label{unbonudedthm}
Let $D$ be a open set in $\mathbb{R}^n$, possibly unbounded and disconnected. Assume that $\overline{D}$ is disjoint
from the closure of an infinite open domain $\Sigma$ satisfying
\begin{equation}\label{jixian11}
\underline{\lim}_{j\rightarrow\infty}
\frac{\big|\big(B_{2^{j+1}r}(x)\setminus{B_{2^{j}r}(x)}\big)\cap\Sigma\big|}{|B_{2^{j+1}r}(x)\setminus B_{2^{j}r}(x)|}\geq c_0,\ \ \forall x\in\mathbb{R}^n,
\end{equation}
for some $c_0>0$ and $r>0$. Suppose  $u(x)$ is in $C_{loc}^{1,1}\cap \mathcal{L}_{sp}$, bounded from above,
and satisfies
\begin{equation}\begin{cases}
(-\Delta)_p^s u(x)+c(x)u(x)\leq0,&x\in D,\\
u(x)\leq0, & x\in \mathbb{R}^n\setminus D,
\end{cases}
\label{a2}
\end{equation}
for some nonnegative function $c(x)$.

Then $u(x)\leq0$ in $D$.
\end{mthm}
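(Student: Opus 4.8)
The plan is to argue by contradiction. Suppose $\sup_D u =: A > 0$. Since $u$ is bounded above and in general the supremum need not be attained, I will work along a maximizing sequence $\{x_k\}\subset D$ with $u(x_k)\to A$. The heart of the matter is to estimate the singular integral $(-\Delta)_p^s u(x_k)$ from below: if I can show that $(-\Delta)_p^s u(x_k) + c(x_k) u(x_k)$ is bounded below by a \emph{strictly positive} constant (independent of $k$) for $k$ large, this contradicts the differential inequality in \eqref{a2}. Because $c(x)\geq 0$ and $u(x_k)\to A>0$, the term $c(x_k)u(x_k)$ is harmless (it is $\geq 0$ once $u(x_k)>0$), so the entire burden falls on showing $(-\Delta)_p^s u(x_k)\geq \delta>0$ eventually. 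Since the $x_k$ may run off to infinity, I should not expect a genuine maximum; instead I will use the boundedness of $u$ together with a translation-normalization: replacing $u$ by $u(\cdot + z_k)$ for suitable shifts keeps us in $C^{1,1}_{loc}\cap\mathcal L_{sp}$ with uniform control, but a cleaner route is to stick with the $x_k$ directly and split the integral.

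The key estimate: write, with $x=x_k$,
\begin{equation*}
(-\Delta)_p^s u(x) = C_{n,s,p}\,\mathrm{P.V.}\!\int_{\mathbb R^n}\frac{|u(x)-u(y)|^{p-2}[u(x)-u(y)]}{|x-y|^{n+sp}}\,dy,
\end{equation*}
and split $\mathbb R^n$ into dyadic annuli $A_j := B_{2^{j+1}r}(x)\setminus B_{2^j r}(x)$ together with the ball $B_r(x)$. On the ball $B_r(x)$ the integrand is controlled using $u\in C^{1,1}_{loc}$ near $x$, and since $u(x_k)$ is \emph{approaching} the supremum, the negative contributions there are small — more precisely, for any $\eta>0$ one has $u(y)\le A \le u(x_k)+\eta$ for all $y$, so $u(x_k)-u(y)\ge -\eta$ on the portions where $u(y)$ is large, while the $C^{1,1}$ bound controls the principal value near the singularity; the net contribution from $B_r(x)$ is bounded below by $-C\eta$ plus a harmless $o(1)$. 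On each far annulus $A_j\cap\Sigma$ we exploit that $u\le 0$ on $\mathbb R^n\setminus D\supset\Sigma$ (since $\overline D\cap\overline\Sigma=\emptyset$), hence $u(x_k)-u(y)\ge u(x_k)\ge A/2>0$ there, giving a \emph{positive} contribution bounded below by
\begin{equation*}
c_{n,s,p}\Big(\tfrac{A}{2}\Big)^{p-1}\sum_j \frac{|A_j\cap\Sigma|}{(2^{j+1}r)^{n+sp}} \;\ge\; c_{n,s,p}\Big(\tfrac{A}{2}\Big)^{p-1}\, c_0 \sum_j \frac{|A_j|}{(2^{j+1}r)^{n+sp}},
\end{equation*}
where the last step uses hypothesis \eqref{jixian11}, and $|A_j|/(2^{j+1}r)^{n+sp}\sim 2^{-jsp}$, so the sum converges to a strictly positive constant $\delta_0=\delta_0(n,s,p,c_0,r,A)>0$. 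The contributions from annuli intersected with $D^c\setminus\Sigma$ (where still $u\le 0$) are nonnegative and can be discarded; the contributions from $A_j\cap D$ could be negative but are bounded below by $-C_{n,s,p}(2\|u^+\|_\infty)^{p-1}\sum_j |A_j|/(2^{j}r)^{n+sp}$, a \emph{finite} constant depending only on $\|u^+\|_\infty$ — and here is where I must be careful: this lower bound is finite but not obviously small, so I need it to be dominated by $\delta_0$. This is arranged by choosing $r$ large (the hypothesis gives $c_0,r$ but the measure-density condition is scale-robust, and enlarging $r$ only shrinks the "bad" $D$-contribution since $u^+$ is bounded while the $\Sigma$-contribution retains its positive lower bound) — or, if $r$ is fixed, by noting the bad term is actually $o(1)$ after the translation normalization because only finitely many annuli are problematic. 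Adding up: $(-\Delta)_p^s u(x_k)\ge \delta_0 - C\eta - o(1)$, and taking $\eta$ small then $k$ large gives $(-\Delta)_p^s u(x_k)\ge \delta_0/2>0$, while $c(x_k)u(x_k)\ge 0$, contradicting \eqref{a2}.

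The main obstacle I anticipate is precisely the rigorous handling of the "approximate maximum point" step: since $u$ need not attain its supremum on the unbounded set $D$, one cannot simply evaluate $(-\Delta)_p^s u$ at a maximum and conclude it is $\ge 0$. One must either (i) build a perturbed function $u_\eps(x) = u(x) - \eps\langle x\rangle$-type that does attain an interior maximum and then pass to the limit, controlling how $(-\Delta)_p^s$ of the perturbation behaves (delicate because $\langle x\rangle$ is not in $\mathcal L_{sp}$), or (ii) use a genuine sequence $x_k$ and make the "$u(y)\le u(x_k)+\eta$ for all $y$" estimate do the work of the maximum-point inequality — this is the route sketched above and is, I believe, the one the authors intend, since the abstract explicitly mentions "estimating the singular integral in \eqref{0} along a sequence of approximate maximum points." The second delicate point is ensuring the possibly-negative contribution from $A_j\cap D$ does not swamp the positive $\Sigma$-contribution; the measure-density hypothesis \eqref{jixian11} holding \emph{for all $x\in\mathbb R^n$} with \emph{uniform} $c_0$ is exactly what makes the positive lower bound uniform along $x_k$, and the boundedness of $u$ from above is what keeps the bad term controlled — these two hypotheses are used in tandem here.
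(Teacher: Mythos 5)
Your overall strategy (contradiction, maximizing sequence $\{x_k\}$, decompose the singular integral into a contribution near $x_k$ and a far contribution over dyadic annuli, extract positivity from $\Sigma$ via the density hypothesis) is the right skeleton and matches the paper's intent, and your estimate of the far annular contribution is essentially correct — in fact your worry about the $A_j\cap D$ term is unfounded: since $u(y)\le A\le u(x_k)+\eta_k$ there, one gets $G(u(x_k)-u(y))\ge -\eta_k^{p-1}$, and $\sum_j |A_j|/(2^jr)^{n+sp}\lesssim r^{-sp}$ is finite, so that contribution is $O(\eta_k^{p-1})$ and vanishes along the sequence; no enlargement of $r$ or translation renormalization is needed.

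The genuine gap is exactly where you flagged it, in the inner ball $B_r(x_k)$. Your claim that this contribution is ``bounded below by $-C\eta$ plus a harmless $o(1)$'' does not follow from what is given. The pointwise bound $u(x_k)-u(y)\ge -\eta_k$ only gives $G(u(x_k)-u(y))\ge -\eta_k^{p-1}$, which, multiplied by the non-integrable kernel $|x_k-y|^{-n-sp}$, produces a \emph{divergent} lower bound over $B_r(x_k)$. The principal value rescues integrability, but only at the price of a second-order Taylor error whose size is governed by $|\nabla u(x_k)|^{p-2}\,[u]_{C^{1,1}(B_r(x_k))}$. The hypothesis is only $u\in C^{1,1}_{loc}$, so there is no uniform bound on this quantity along $x_k$, which may run off to infinity; the $o(1)$ you invoke is not justified. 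This is precisely the ``second difficulty'' the authors discuss in the introduction: fractional $p$-equations lack the regularity theory that would make such a direct estimate along a sequence viable.

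Your alternative (i) — perturb so that a genuine maximum is attained — is the correct idea, but your proposed perturbation $-\varepsilon\langle x\rangle$ is the wrong one, as you yourself suspect. The paper instead adds a \emph{compactly supported} bump $\varepsilon_k\Phi_{r_k}(\cdot)=\varepsilon_k\Phi((\cdot-x^k)/r_k)$ with $\Phi\in C^\infty_0(B_2)$ radially decreasing, choosing $\varepsilon_k\to 0$ just large enough that $u+\varepsilon_k\Phi_{r_k}$ attains a global maximum at some $\bar x^k\in B_{r_k}(x^k)$. At a true maximum, the pointwise inequality $u(\bar x^k)+\varepsilon_k\Phi_{r_k}(\bar x^k)\ge u(y)+\varepsilon_k\Phi_{r_k}(y)$ combined with monotonicity of $G(t)=|t|^{p-2}t$ makes the \emph{integrand} $G(u(\bar x^k)-u(y))+G(\varepsilon_k\Phi_{r_k}(\bar x^k)-\varepsilon_k\Phi_{r_k}(y))$ pointwise nonnegative on $B_{2r_k}(x^k)$ — no principal-value cancellation and no uniform $C^{1,1}$ bound are needed for the inner part. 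The outer part is then bounded below by $A^{p-1}\int_{\Sigma\setminus B_{2r_k}}|{\bar x^k}-y|^{-n-sp}\,dy\gtrsim r_k^{-sp}$ using the elementary inequality $G(t_1)+G(t_2)\ge 2^{2-p}G(t_1+t_2)$ for $t_1+t_2>0$, and the price paid is the perturbation term $\frac{\varepsilon_k^{p-1}}{r_k^{sp}}(-\Delta)^s_p\Phi$, which is $O(\varepsilon_k^{p-1}/r_k^{sp})$ and so gets dwarfed as $\varepsilon_k\to 0$; this yields the contradiction. If you replace the growing perturbation in your option (i) by this compactly supported one, the argument closes.
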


\medskip
 A similar \emph{maximum principle} in unbounded domains in the classical case involving the regular  Laplacian (when $s=1$, $p=2$) was obtained by Berestycki, Caffarelli and Nirenberg \cite{BCN1}.
Birindelli and Prajapat \cite{BP} extended the {\em maximum principle} to the Heisenberg group.
For the fractional Laplacian (when $s\in(0,1)$, $p=2$), Dipierro, Soave and Valdinoci \cite{DSV} proved the same results based on \emph{growth lemmas} established by De Giorgi \cite{De} and Silvestre \cite{S}
respectively.
\medskip

In both of the above articles \cite{BCN1, DSV}, the authors assumed that the complement of $\overline{D}$ contains  an infinite open cone $\Sigma$. One may call this an {\em exterior cone condition}. It is
easy to check that the infinite open cone satisfies \eqref{jixian11} in our Theorem \ref{unbonudedthm}. Actually, one can see that our condition  \eqref{jixian11} is much weaker than the {\em exterior cone condition}. There are many domains $\overline{D}$ whose complement do not contain an infinite cone. To illustrate this, we list the following two simple examples of such domains.
\medskip

(a) $D=\{x\in\mathbb{R}^n \mid 2i<x_n<2i+1,\ i=0,\pm1,\pm2,\cdots \}$.
\medskip

(b) $D =\{x\in\mathbb{R}^n \mid 2i<|x|<2i+1,\ i=0,1,2,\cdots \}$.
\medskip

Obviously, none of the above two domains $D$ satisfy the {\em exterior cone condition}. Since our Theorem \ref{unbonudedthm} includes the case when $p=2$, it improves the result in \cite{DSV} by weakening the condition on the domains.
\medskip

We would like to mention that the operators $-\Delta$ and $(-\Delta)^s$ considered in \cite{BCN1} and \cite{DSV} respectively are linear ones, while the fractional p-Laplacian $(-\Delta)^s_p$ in this paper is fully non-linear. Hence the methods in \cite{BCN1} and \cite{DSV} can no longer be applied here. To deal with such non-local non-linear operators, we introduce new ideas.
\medskip

Usually, to prove a maximum principle on bounded domains, or on unbounded domains assuming that the solutions vanishes near infinity, one derives contradictions at a maximum point. However, on unbounded domains without imposing any asymptotic conditions on the solution $u$, the maximum value of $u$ may not be attained, and a maximizing sequence may tend to infinity.
To circumvent this difficulty, we estimate the singular integral defining $(-\Delta)^s_p u$ along a sequence of approximate maximum points to derive a contradiction if, in Theorem \ref{unbonudedthm}, $\sup_D u(x) > 0$. It turns out that this approach is quite simple, and it also applies to the case of fractional Laplacian (when $p=2$).
We believe that this method will become a very useful tool to investigate  many other non-linear equations involving general non-linear non-local operators.
\medskip

The \emph{moving plane method} and the \emph{sliding method} are techniques that have been used
in recent years to establish qualitative properties of solutions of
non-linear elliptic equations such as symmetry, monotonicity, and non-existence. In fact,
the \emph{method of moving planes} was initially invented by Alexanderoff in the early 1950s. Later, it was further developed by Serrin \cite{Se}, Gidas, Ni and Nirenberg \cite{GNN,GNN1}, Caffarelli, Gidas and Spruck \cite{CGS}, Chen and Li \cite{CL}, Li and Zhu \cite{LZ}, Chang and Yang \cite{CY}, Lin \cite{Lin} and many others. For more literatures about the method of moving planes,
please refer to \cite{CFY,CL1,CLZ,DFHQW,DFQ,DLL,DQ,FC,LD,LZ1,LZ2,LZ3,MZ,WX,ZCCY} and the references therein.
The \emph{sliding method} was introduced by Berestycki, Caffarelli and Nirenberg \cite{BCN1,BCN2,BN}, which is slightly
different from the\emph{ method of moving plane}, it is used to compare the solution with its translation
rather than its reflection.
The \emph{sliding method} was also successful in obtaining symmetry and monotonicity of solutions for many kind of domains (see \cite{BCN2,BHM}).
\medskip

We consider the following fractional p-Laplacian equation
\begin{equation}\label{fractionalbianjiewai}\begin{cases}
(-\Delta)_p^s u(x)=f(u(x)),&x\in\Omega,\\
u(x)>0, &x\in\Omega.\\
\end{cases}\end{equation}

\medskip

As preparations to carry out the {\em sliding method} along $x_n$-direction, we obtain the following two theorems, which may also be applied to other situations.
\begin{mthm}\label{xiaoyu1}
Let $u\in C_{loc}^{1,1}\cap \mathcal{L}_{sp}$ be a bounded solution of \eqref{fractionalbianjiewai}.
Assume that $f$ is continuous and satisfies

(a)  There exists $\mu>0$ such that $f(t)>0$
on $(0,\mu)$, and $f(t)\leq0$ for $t\geq\mu$.

Suppose that $u(x)<\mu$, $x\in \mathbb{R}^n\setminus \Omega$, then $$u(x) <\mu  \mbox{ for all } x \in \Omega.$$
\end{mthm}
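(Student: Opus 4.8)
The plan is to deduce the statement from the maximum principle in unbounded domains, Theorem~\ref{unbonudedthm}, applied to the function $u-\mu$, and then to promote the resulting non‑strict bound to a strict one by a strong‑maximum‑principle argument.

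First observe that $(-\Delta)_p^s$ is unchanged when a constant is added to its argument, since only the differences $u(x)-u(y)$ enter its definition; hence $(-\Delta)_p^s(u-\mu)=(-\Delta)_p^s u$. Suppose, for contradiction, that $D:=\{x\in\mathbb{R}^n\mid u(x)>\mu\}$ is nonempty. It is open because $u$ is continuous, and $D\subseteq\Omega$ because $u<\mu$ on $\mathbb{R}^n\setminus\Omega$ by hypothesis. Put $w:=u-\mu$. On $D$ we have $u>\mu$, so assumption~(a) gives $f(u)\le 0$ there, whence $(-\Delta)_p^s w=(-\Delta)_p^s u=f(u)\le 0$ in $D$; on $\mathbb{R}^n\setminus D$ we have $w\le 0$ by the definition of $D$; and $w$ is bounded from above and lies in $C_{loc}^{1,1}\cap\mathcal{L}_{sp}$ since $u$ does and $\mathcal{L}_{sp}$ is stable under adding a constant. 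Thus $w$ and $D$ meet all the hypotheses of Theorem~\ref{unbonudedthm} with $c\equiv 0$, provided we exhibit the required exterior set $\Sigma$.

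Here the geometry of $\Omega$ enters. Since $D\subseteq\Omega=\{x_n>\varphi(x')\}$ with $\varphi$ continuous, $\overline{D}\subseteq\{x_n\ge\varphi(x')\}$. When $\varphi$ is bounded from below, say $\varphi\ge -M$, the half‑space $\Sigma:=\{x_n<-M-1\}$ is contained in $\{x_n<\varphi(x')\}\subseteq\mathbb{R}^n\setminus\Omega$, so $\overline{\Sigma}$ is disjoint from $\overline{D}$; and a half‑space manifestly satisfies \eqref{jixian11}, because the fraction of a large annulus lying on a fixed side of a hyperplane tends to $1/2$ as the radius grows. (In general one takes $\Sigma$ to be $\{x_n<\varphi(x')\}$ itself, or any open subset of $\mathbb{R}^n\setminus\Omega$ that keeps positive distance from $\overline D$ and is ``fat'' in the sense of \eqref{jixian11}.) Theorem~\ref{unbonudedthm} then yields $w\le 0$ in $D$, i.e. $u\le\mu$ on $\{u>\mu\}$, which is absurd unless $D=\emptyset$. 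Therefore $u\le\mu$ throughout $\Omega$.

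It remains to rule out equality. Suppose $u(x^0)=\mu$ for some $x^0\in\Omega$. By the previous paragraph $x^0$ is then a point of global maximum of $u$ on $\mathbb{R}^n$, so $\nabla u(x^0)=0$ and, using $u\in C_{loc}^{1,1}$, the difference $0\le u(x^0)-u(y)$ is bounded by a multiple of $|y-x^0|^2$ near $x^0$. Since $p\ge 2$, the integrand
\[
\frac{|u(x^0)-u(y)|^{p-2}[u(x^0)-u(y)]}{|x^0-y|^{n+sp}}=\frac{\big(u(x^0)-u(y)\big)^{p-1}}{|x^0-y|^{n+sp}}\ge 0
\]
is absolutely integrable near $x^0$ (the local exponent $2(p-1)-n-sp$ exceeds $-n$, as $p\ge 2$ and $s<1$), so $(-\Delta)_p^s u(x^0)$ is an honest nonnegative integral, strictly positive unless $u\equiv\mu$ — impossible, because $u<\mu$ on the nonempty open set $\mathbb{R}^n\setminus\Omega$. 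Hence $(-\Delta)_p^s u(x^0)>0$, contradicting $(-\Delta)_p^s u(x^0)=f(\mu)\le 0$. This forces $u<\mu$ in all of $\Omega$.

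The genuine difficulty is outsourced to Theorem~\ref{unbonudedthm}: there one must estimate the singular integral defining $(-\Delta)_p^s w$ along a sequence of approximate maxima which may escape to infinity and along which $\sup w$ need not be attained, the density condition \eqref{jixian11} being precisely what prevents the favorable contribution of $\Sigma$ from degenerating. Within the present argument, the only step requiring care is the construction of $\Sigma$ for $D=\{u>\mu\}$ — the geometric input about $\Omega$; and since Theorem~\ref{unbonudedthm} delivers only a non‑strict inequality, the contact‑point argument of the last paragraph is indispensable for the strict conclusion.
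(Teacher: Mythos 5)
Your proposal is correct and follows essentially the same route as the paper's proof (Theorem~\ref{mainthm11aa} in Section~2): write $w=u-\mu$, note that $(-\Delta)_p^s$ is invariant under adding constants, observe that on $D=\{u>\mu\}\subset\Omega$ one has $f(u)\leq 0$ so $(-\Delta)_p^s w\leq 0$, and apply Theorem~\ref{unbonudedthm} to conclude $u\leq\mu$, after which a strong-maximum-principle step upgrades this to $u<\mu$. The only real difference is cosmetic: the paper cites its Lemma~\ref{Strongmax} for the strict inequality, whereas you reproduce that lemma's proof inline, specialized to the comparison function $v\equiv\mu$ — an interior contact point $x^0$ of a global maximum forces $(-\Delta)_p^s u(x^0)\geq 0$, strictly unless $u\equiv\mu$, contradicting $(-\Delta)_p^s u(x^0)=f(\mu)\leq 0$. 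Your extra remark on local integrability near $x^0$ (exponent $2(p-1)-sp>0$ since $p\geq 2$, $s<1$) is a correct and welcome detail.

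One point where you are actually more careful than the paper: Theorem~\ref{unbonudedthm} needs a set $\Sigma$ disjoint from $\overline D$ satisfying the density condition \eqref{jixian11}, and neither the paper's statement of Theorem~\ref{xiaoyu1} nor its proof exhibits such a $\Sigma$. You correctly observe that when $\varphi$ is bounded below a half-space $\Sigma$ works, but that for a general continuous epigraph the complement of $\Omega$ need not satisfy \eqref{jixian11} (e.g.\ $\varphi(x')=-|x'|^2$), so some geometric hypothesis on $\Omega$ is tacitly needed. Your parenthetical ``in general one takes $\Sigma=\{x_n<\varphi(x')\}$ itself'' should not be read as a claim that this always works — it doesn't — but as you phrase it, conditioned on \eqref{jixian11} holding, the argument is sound, and you are flagging a genuine gap in the paper's presentation rather than introducing one of your own.
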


\begin{mthm}\label{mainthm11}
Let $u\in C_{loc}^{1,1}\cap \mathcal{L}_{sp}$ be a bounded solution of \eqref{fractionalbianjiewai} with
$$0\leq u(x)<\mu, \;\; x\in \mathbb{R}^n\setminus \Omega.$$
Assume that $f$ is continuous, satisfies condition (a)
and for some $0<t_0<t_1<\mu$,

(b) $f(t)\geq\delta_0t$ on $[0,t_0]$ for some $\delta_0>0$, and

(c) $f(t)$ is nonincreasing on $(t_1,\mu)$.

Then $u(x)\rightarrow\mu$ uniformly in $\Omega$ as $dist(x,\partial\Omega)\rightarrow\infty$.
\end{mthm}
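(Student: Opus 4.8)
The plan is to show that along any sequence $x^k \in \Omega$ with $\mathrm{dist}(x^k,\partial\Omega)\to\infty$, one has $u(x^k)\to\mu$; by a standard argument this gives the claimed uniform convergence. Suppose not: then (passing to a subsequence) $u(x^k)\leq \mu-\eta$ for some $\eta>0$. Since $\mathrm{dist}(x^k,\partial\Omega)\to\infty$, one can fit balls $B_{R_k}(x^k)\subset\Omega$ with $R_k\to\infty$. The first step is to translate: set $u_k(x):=u(x+x^k)$, which still solves $(-\Delta)_p^s u_k = f(u_k)$ on the enlarging balls $B_{R_k}(0)$, is bounded between $0$ and $\mu$ on all of $\mathbb{R}^n$ (using the bound $0\le u<\mu$ outside $\Omega$ together with Theorem \ref{xiaoyu1}, which gives $u<\mu$ in $\Omega$ as well, and positivity inside $\Omega$). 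By $C^{1,1}_{loc}$ estimates (or the Hölder/compactness theory available for the fractional $p$-Laplacian) we can extract a locally uniformly convergent subsequence $u_k\to u_\infty$, where $u_\infty$ is an entire solution of $(-\Delta)_p^s u_\infty = f(u_\infty)$ on $\mathbb{R}^n$, with $0\le u_\infty\le \mu$ everywhere and $u_\infty(0)\le \mu-\eta<\mu$.

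The second step is to derive a contradiction by showing that the only entire solution with $0\le u_\infty\le\mu$ is $u_\infty\equiv\mu$ — i.e. a Liouville-type statement forcing $u_\infty$ up to the level $\mu$. Here I would use a \emph{sliding} (or sub-solution comparison) argument built from hypotheses (b) and (c). The idea is to construct a compactly perturbed barrier: since $f(t)\ge \delta_0 t$ near $0$, the value $u_\infty$ cannot stay near $0$ anywhere — one slides a fixed positive bump (a solution or subsolution of $(-\Delta)_p^s w = \delta_0 w$ on a large ball, vanishing outside, small enough to sit under $u_\infty$ initially) and increases its amplitude or translates it, using the maximum principle Theorem \ref{unbonudedthm} (applied on the complement of the bump's support, which easily satisfies \eqref{jixian11}) to conclude the bump stays below $u_\infty$; this yields a uniform lower bound $u_\infty\ge t_0$. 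Then, on the range $(t_1,\mu)$ where $f$ is nonincreasing, $v:=\mu-u_\infty\ge 0$ satisfies a differential inequality of the form $(-\Delta)_p^s$-type expression controlled by $c(x)v$ with $c\ge 0$ (exploiting monotonicity of $f$, i.e. $f(\mu)-f(u_\infty)=f(\mu)-f(\mu-v)\le 0$ reversed appropriately, together with $f(\mu)\le 0$ from (a)); applying the maximum principle Theorem \ref{unbonudedthm} to $v$ on $\mathbb{R}^n$ forces $v\le 0$, hence $u_\infty\equiv\mu$, contradicting $u_\infty(0)\le\mu-\eta$.

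The main obstacle I anticipate is the second step, specifically passing from the pointwise lower bound $u_\infty\ge t_0$ up to $u_\infty\equiv\mu$ and making the final maximum-principle comparison rigorous: the nonlocal, nonlinear, and \emph{degenerate} nature of $(-\Delta)_p^s$ means the difference $(-\Delta)_p^s\mu - (-\Delta)_p^s u_\infty$ is not linear in $v=\mu-u_\infty$, so one cannot directly write it as $(-\Delta)_p^s(\cdot)+c(x)v$. Instead one must exploit the monotonicity/convexity structure of the map $t\mapsto |t|^{p-2}t$ to bound the relevant integral differences — this is precisely the kind of estimate (a fractional-$p$ analogue of the narrow-region/key boundary estimate of Chen–Li \cite{CL2}) that the authors presumably have in hand. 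A secondary technical point is ensuring the translated solutions $u_k$ converge strongly enough to pass the (principal-value) singular integral to the limit; this requires the $C^{1,1}_{loc}$ regularity and a uniform tail bound coming from $u_\infty\in\mathcal{L}_{sp}$ (immediate here since $u_\infty$ is bounded).
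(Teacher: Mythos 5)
Your overall strategy --- translate to the far-away points, extract an entire limit solution, and then prove a Liouville-type statement forcing the limit to equal $\mu$ --- is genuinely different from the paper's, and it has two gaps, one of which the paper explicitly identifies as the reason it does \emph{not} take this route. First, to say that $u_\infty$ solves $(-\Delta)_p^s u_\infty=f(u_\infty)$ you need $(-\Delta)_p^s u_k\to(-\Delta)_p^s u_\infty$, which requires uniform $C^{1,1}_{loc}$ bounds on the translates $u_k$; for the fractional $p$-Laplacian the best available regularity for bounded solutions with bounded right-hand side is uniform H\"older continuity (Lemma \ref{holderestimate}), which gives local uniform convergence of $u_k$ but not convergence of the singular integrals. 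The introduction of the paper states this obstruction verbatim and explains that the whole point of the method is to avoid forming a limit equation. Second, even granting an entire limit solution with $t_0\le u_\infty\le\mu$, your concluding step applies Theorem \ref{unbonudedthm} to $v=\mu-u_\infty$ ``on $\mathbb{R}^n$''; but that theorem requires $v\le0$ outside $D$ and that $\overline D$ be disjoint from an infinite set $\Sigma$ satisfying the density condition \eqref{jixian11} --- with $D=\mathbb{R}^n$ (or $D=\{u_\infty<\mu\}$, which may be all of $\mathbb{R}^n$) both hypotheses fail, so no contradiction is reached. The Liouville statement you need is plausible but is left unproved, and you yourself flag that the nonlinear structure prevents writing the comparison in the form $(-\Delta)_p^s(\cdot)+c(x)v$.

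For contrast, the paper's proof never passes to a limit equation. It first proves (Lemma \ref{leme}) the uniform lower bound $u>\varepsilon_1$ at distance $>R_0$ from $\partial\Omega$ by sliding the rescaled principal eigenfunction $\psi_{\varepsilon,R}$, which is a subsolution by hypothesis (b), and ruling out a first touching point by comparing the two singular integrals there (close in spirit to the first half of your step two, but carried out on $u$ itself, not on a limit). Then, for each point $x_R$ with $\mathrm{dist}(x_R,\partial\Omega)>2R$, it takes a smooth bump $\phi_R$ of height $\mu$ supported in $B_R(x_R)$ with $(-\Delta)_p^s\phi_R\le C R^{-sp}$, locates the minimum point $\bar x_R$ of $u-\phi_R$ in $B_R(x_R)$, and shows by monotonicity of $G(t)=|t|^{p-2}t$ that $(-\Delta)_p^s u(\bar x_R)-(-\Delta)_p^s\phi_R(\bar x_R)<0$, whence $f(u(\bar x_R))<C R^{-sp}\to0$. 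Hypotheses (a), (b) and the lower bound exclude $u(\bar x_R)\in[\varepsilon_1,t_1]$ for large $R$, and the monotonicity of $f$ on $(t_1,\mu)$ then forces $u(\bar x_R)\to\mu$, hence $u(x_R)\ge u(\bar x_R)\to\mu$. If you want to salvage your architecture, you would need either a uniform $C^{1,1}$ estimate (not available) or to replace both the compactness step and the Liouville step by pointwise singular-integral estimates of this kind.
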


Let us point out that Theorem \ref{xiaoyu1} and Theorem \ref{mainthm11} are closely related to  the well-known {\em De Giorgi conjecture}:

\begin{mconjecture}
(De Giorgi \cite{De2}). If $u$ is a solution of
$$- \Delta u = u-u^3,$$  such that
 \begin{equation}  |u|\leq1  \mbox{ in } \mathbb{R}^n, \lim_{x_n\rightarrow\pm\infty} u(x',x_n)=\pm 1
\mbox{ for all } x'\in \mathbb{R}^{n-1}, \label{a3}
\end{equation} and
\begin{equation}
\frac{\partial u}{\partial x_n}> 0.
\label{a4}
\end{equation}

Then
there exists a vector $a\in \mathbb{R}^{n-1}$ and a function $u_1$: $\mathbb{R}\rightarrow\mathbb{R}$ such that
$$u(x',x_n)=u_1(a\cdot x'+x_n) \mbox{ in } \mathbb{R}^n.$$
\end{mconjecture}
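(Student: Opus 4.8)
\emph{Strategy.} The plan is to deduce the one-dimensional symmetry of $u$ from its monotonicity \eqref{a4}, in the spirit of the \emph{sliding method} developed in this paper, by showing that for each $i\in\{1,\dots,n-1\}$ the quotient $\sigma_i:=\partial_i u/\partial_n u$ is constant. This ratio is well defined and smooth since $\partial_n u>0$, and once $\partial_i u=c_i\,\partial_n u$ for constants $c_i$ it follows at once that $u(x)=u_1(a\cdot x'+x_n)$ with $a=(c_1,\dots,c_{n-1})$. The constancy of $\sigma_i$ will come from a Liouville-type estimate built on the \emph{stability} of $u$ — a consequence of \eqref{a4} — together with an energy bound for $u$ on balls. (If one strengthens \eqref{a3} by requiring the limits to be \emph{uniform} in $x'$, then a sliding/moving-plane argument alone already yields the conclusion — the resolved ``Gibbons conjecture'' of Berestycki--Hamel--Monneau and Farina — but the pointwise hypothesis \eqref{a3} genuinely requires the following argument.)

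\emph{Step 1 (stability).} Differentiating $-\Delta u=u-u^3$ in an arbitrary direction $e$ shows that $v_e:=\partial_e u$ solves the linearized equation $-\Delta v_e=(1-3u^2)v_e$. In particular $v:=\partial_n u$ is a \emph{positive} solution, so by the Allegretto--Piepenbrink / Fischer-Colbrie--Schoen principle the associated quadratic form is nonnegative:
\begin{equation*}
\int_{\mathbb{R}^n}\big(|\nabla\xi|^2-(1-3u^2)\xi^2\big)\,dx\;\ge\;0\qquad\text{for all }\xi\in C_c^\infty(\mathbb{R}^n).
\end{equation*}

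\emph{Step 2 (ground-state substitution).} Writing $\partial_i u=\sigma_i v$ and using that both $v$ and $\sigma_i v$ solve the linearized equation, one obtains the divergence identity $\operatorname{div}(v^2\nabla\sigma_i)=0$. Inserting $\xi=\sigma_i\,v\,\eta_R$ into the stability inequality and integrating by parts via this identity cancels the zero-order term and leaves, for any Lipschitz cutoff $\eta_R$,
\begin{equation*}
\int_{\mathbb{R}^n}v^2\,|\nabla\sigma_i|^2\,\eta_R^2\,dx\;\le\;\int_{\mathbb{R}^n}\sigma_i^2 v^2\,|\nabla\eta_R|^2\,dx\;=\;\int_{\mathbb{R}^n}(\partial_i u)^2\,|\nabla\eta_R|^2\,dx\;\le\;\int_{\mathbb{R}^n}|\nabla u|^2\,|\nabla\eta_R|^2\,dx .
\end{equation*}
The problem is thus reduced to choosing cutoffs $\eta_R\uparrow1$ that make the right-hand side vanish.

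\emph{Step 3 (closing the estimate; the main obstacle).} Interior elliptic estimates together with $|u|\le1$ give a uniform gradient bound $|\nabla u|\le C$. When $n=2$, take $\eta_R$ equal to $1$ on $B_{\sqrt R}$, equal to $0$ off $B_R$, and harmonic (logarithmic) in between, so that $\int|\nabla\eta_R|^2\le C/\log R$; then the right-hand side above is $\le C\|\nabla u\|_\infty^2/\log R\to0$, forcing $\nabla\sigma_i\equiv0$. When $n=3$ one instead uses the energy estimate $\int_{B_R}|\nabla u|^2\,dx\le C R^{n-1}$ for monotone bounded solutions (Ambrosio--Cabré, resting on the Modica inequality $\tfrac12|\nabla u|^2\le W(u)$ with $W(t)=\tfrac14(1-t^2)^2$ and on \eqref{a4}); with a standard linear cutoff supported in $B_{2R}$ the right-hand side is then $\le CR^{n-1}/R^2\to0$. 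The genuine difficulty — and what makes the statement deep — lies in dimensions $n\ge4$: the required energy bound is obtained only by blowing $u$ down, showing the limit is a locally minimizing solution whose level sets are area-minimizing hypersurfaces, and invoking De~Giorgi's flatness/regularity theory for such hypersurfaces, which is valid only through $n=8$; and for $n\ge9$ the statement \emph{as worded} is in fact false, via the del~Pino--Kowalczyk--Wei solution modeled on the nonplanar Bombieri--De~Giorgi--Giusti minimal graph. Accordingly I would present this scheme as a complete proof for $n\le3$, and for $4\le n\le8$ under the additional uniform-limit hypothesis (Savin's theorem), while flagging the blow-down / minimal-surface step as the essential obstruction.
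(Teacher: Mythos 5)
This statement is not proved in the paper at all: it is quoted verbatim as \emph{De Giorgi's conjecture}, purely to motivate Theorems 2--4 (the authors only observe that their hypotheses (a)--(c) are satisfied by $f(u)=u-u^3$, and then prove monotonicity/asymptotics for a \emph{different} problem, the fractional $p$-Laplacian on an epigraph with exterior Dirichlet data). So there is no proof of the conjecture in the paper to compare yours against, and you should not present an argument for it as if it were one of the paper's results.

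On the merits, your proposal is an accurate reconstruction of the stability route from the literature (Berestycki--Caffarelli--Nirenberg, Ghoussoub--Gui, Ambrosio--Cabr\'e): Steps 1 and 2 — positivity of $\partial_n u$ implying nonnegativity of the linearized quadratic form, the identity $\operatorname{div}(v^2\nabla\sigma_i)=0$, and the Caccioppoli inequality obtained from $\xi=\sigma_i v\eta_R$ — are correct. But two points prevent this from being a proof of the statement as written. First, in Step 3 your $n=3$ computation is off: with $\int_{B_R}|\nabla u|^2\le CR^{n-1}=CR^2$ and a linear cutoff, the right-hand side is $\le CR^{n-1}/R^2=C$, i.e.\ \emph{bounded}, not vanishing; one must first conclude $\int v^2|\nabla\sigma_i|^2<\infty$ and then rerun the estimate with the Cauchy--Schwarz factor localized to the annulus $B_{2R}\setminus B_R$ (the standard hole-filling iteration in the Liouville theorem) to force $\nabla\sigma_i\equiv 0$. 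Second — and as you yourself concede — the scheme proves the conjecture only for $n\le 3$; for $4\le n\le 8$ it requires the blow-down/minimal-surface machinery and (in Savin's theorem) the extra uniform-limit hypothesis, and for $n\ge 9$ the statement is false by del Pino--Kowalczyk--Wei. A writeup that establishes the claim only in low dimensions, and flags the rest as an obstruction, cannot be accepted as a proof of the statement; it is a (good) survey of why the statement is a conjecture, which is exactly the role it plays in this paper.
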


If we replace $-\Delta$ by $(-\Delta)^s_p$, take $\mu=1$, and $f(u)=u-u^3$ as in the {\em De Giorgi's conjecture},
then conditions (a)-(c) in Theorem \ref{mainthm11} are satisfied. Hence we
derive that
$$u<1 \mbox{ in } \Omega \mbox{ and }  u(x)\rightarrow1 \mbox{ uniformly in } \Omega \mbox{ as } dist(x,\partial\Omega)\rightarrow\infty.$$
Therefore, we can replace condition (\ref{a3}) by
$$u>0 \mbox{ for } x_n>M \;\;  (\mbox{take } \varphi(x')\equiv M).$$

Based on the above two theorems, we will apply the {\em sliding method} to obtain the monotonicity of solutions for the following problem.

\begin{equation}\label{fractional}\begin{cases}
(-\Delta)_p^s u(x)=f(u(x)),&x\in\Omega,\\
u(x)>0,&x\in\Omega,\\
u(x)=0, &x\in \mathbb{R}^n\setminus \Omega,
\end{cases}
\end{equation}
where $\Omega$ satisfies the uniform two-sided ball condition (the exterior and interior ball conditions).
\medskip

We prove

\begin{mthm}\label{mainthm}
Let $u\in C_{loc}^{1,1}\cap \mathcal{L}_{sp}$ be a bounded solution of \eqref{fractional}.
Assume that $f$ is a continuous function and satisfies conditions (a)-(c) for some $0<t_0<t_1<\mu$.

Then $u$ is strictly monotone increasing in $x_n$.

Furthermore, the bounded solution of (\ref{fractional}) is unique.
\end{mthm}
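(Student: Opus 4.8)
The plan is to establish the monotonicity by a \emph{sliding} argument in the $x_n$-direction, with Theorem~\ref{mainthm11} (and, via Theorem~\ref{xiaoyu1}, the bounds $0\le u<\mu$) and the approximate–extremal–point technique behind Theorem~\ref{unbonudedthm} as the main tools; uniqueness will follow by sliding one solution against the vertical translates of another. For $\tau\ge0$ write $u_\tau(x):=u(x',x_n+\tau)$ and $w_\tau:=u_\tau-u$. Since $\Omega$ is an epigraph, $\Omega+\tau e_n\subseteq\Omega$, so $u_\tau$ solves $(-\Delta)_p^s u_\tau=f(u_\tau)$ on $\Omega$; and for $x\notin\Omega$ we have $u(x)=0\le u(x+\tau e_n)=u_\tau(x)$, whence $w_\tau\ge0$ on $\mathbb{R}^n\setminus\Omega$ for every $\tau\ge0$. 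It therefore suffices to prove $w_\tau\ge0$ on $\mathbb{R}^n$ for every $\tau>0$; the strict inequality $\partial_{x_n}u>0$ will be recovered afterward from a strong maximum principle.

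\emph{Step 1 (starting the slide).} I would first show $w_\tau\ge0$ for all large $\tau$. By Theorem~\ref{mainthm11} there is $R>0$ with $u>t_1$ on $\{\mathrm{dist}(\cdot,\partial\Omega)>R\}$; since translating the inscribed ball $B_{\mathrm{dist}(x,\partial\Omega)}(x)\subseteq\Omega$ upward stays in $\Omega$, one has $\mathrm{dist}(x+\tau e_n,\partial\Omega)\ge\mathrm{dist}(x,\partial\Omega)$, so there both $u$ and $u_\tau$ lie in $(t_1,\mu)$, the range on which $f$ is nonincreasing by (c). Using in addition the controlled vanishing of $u$ at $\partial\Omega$ afforded by the uniform two–sided ball condition, one checks that for $\tau$ large the set $D_\tau:=\{x\in\Omega:w_\tau(x)<0\}$ is confined to $\{\mathrm{dist}(\cdot,\partial\Omega)>R\}$; thus $\overline{D_\tau}$ is disjoint from $\partial\Omega$, hence from $\overline{\Sigma}$, where $\Sigma$ is the open subgraph of $\varphi$, which satisfies the thickness condition \eqref{jixian11}. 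Assume $m:=-\inf_{\mathbb{R}^n}w_\tau>0$ and pick an approximate extremal sequence $x_k\in D_\tau$: $w_\tau(x_k)\to-m$ and $w_\tau(y)\ge w_\tau(x_k)-\varepsilon_k$ for all $y$, with $\varepsilon_k\to0$. Write $g(t)=|t|^{p-2}t$; since $p\ge2$, $g$ is increasing with $g'(t)=(p-1)|t|^{p-2}$. On $\Sigma$ we have $u=0$ and $u_\tau\ge0$, so $u_\tau(x_k)-u_\tau(y)\le u(x_k)-u(y)-m+\varepsilon_k$; globally $u_\tau(x_k)-u_\tau(y)\le u(x_k)-u(y)+\varepsilon_k$; and near $x_k$ one uses $u\in C^{1,1}_{loc}$. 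Inserting these into the singular integral, and using $u(x_k)\in(t_1,\mu)$, yields
\begin{equation*}
(-\Delta)_p^s u_\tau(x_k)-(-\Delta)_p^s u(x_k)\ \le\ -\,c\int_{\Sigma}\frac{dy}{|x_k-y|^{n+sp}}+o_k(1)\ \le\ -c\,\delta+o_k(1),
\end{equation*}
with $c=c(m,t_1,\mu,n,s,p)>0$ and $\delta>0$ independent of $k$ by \eqref{jixian11}. For $k$ large the right-hand side is negative, contradicting $(-\Delta)_p^s u_\tau(x_k)-(-\Delta)_p^s u(x_k)=f(u_\tau(x_k))-f(u(x_k))\ge0$. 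Hence $m=0$, i.e. $w_\tau\ge0$.

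\emph{Step 2 (sliding down) and strictness.} Let $\tau_0:=\inf\{\tau>0:\ w_{\tau'}\ge0\text{ on }\mathbb{R}^n\ \text{for all }\tau'\ge\tau\}\in[0,\infty)$; by continuity $w_{\tau_0}\ge0$. If $\tau_0>0$, then $w_{\tau_0}\not\equiv0$ (else $u$ would be $\tau_0$-periodic in $x_n$, against $u\to\mu$), so the strong maximum principle gives $w_{\tau_0}>0$ in $\Omega$, and the variant of the Hopf lemma for $(-\Delta)_p^s$ of \cite{CL2} (using the interior ball condition) gives a quantitative lower bound for $w_{\tau_0}$ near $\partial\Omega$. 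Feeding this boundary estimate, the interior positivity, the decay $w_{\tau_0}\to0$ at infinity, and the monotonicity of $f$ on $(t_1,\mu)$ back into the argument of Step~1 shows $w_{\tau_0-\varepsilon}\ge0$ for small $\varepsilon>0$, contradicting the definition of $\tau_0$; hence $\tau_0=0$ and $u_\tau\ge u$ for all $\tau>0$, so $u$ is nondecreasing in $x_n$. If $\partial_{x_n}u(x_0)=0$ somewhere, then some $w_\tau$ ($\tau$ small) would have an interior zero, forcing $w_\tau\equiv0$ and contradicting $u\to\mu$; hence $u$ is strictly increasing in $x_n$. For uniqueness, given two bounded solutions $u,v$ of \eqref{fractional}, run Steps~1–2 with $u_\tau$ replaced by $v(\cdot+\tau e_n)$: this gives $v(\cdot+\tau e_n)\ge u$ for $\tau$ large and then, sliding down, $v\ge u$ (the stopping value must be $0$ since $v\equiv u(\cdot-\tau_0 e_n)$ with $\tau_0>0$ would force $u>0$ on part of $\{x_n\le\varphi(x')\}$); interchanging $u$ and $v$ gives $u\equiv v$.

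\emph{Main obstacle.} The crux is the singular–integral estimate in Step~1: one must show that the near–diagonal part of $(-\Delta)_p^s u_\tau(x_k)-(-\Delta)_p^s u(x_k)$ is genuinely $o_k(1)$, using only the $C^{1,1}$ bound on $u$ around the approximate extremal point (so the integrand there is $O(|x_k-y|^{\,p(1-s)-n})$, integrable), while the tail over $\Sigma$ contributes a strictly negative amount bounded away from $0$ via \eqref{jixian11}, all uniformly as $x_k$ escapes to infinity — which it may do along points whose distance to $\partial\Omega$ stays bounded. A second delicate point is the comparison near $\partial\Omega$ "at horizontal infinity" in Steps~1–2 when $\varphi$ varies rapidly, where the uniform two–sided ball condition and the Hopf–type estimate of \cite{CL2} are indispensable.
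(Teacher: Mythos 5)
Your overall strategy — slide in the $x_n$-direction, start from large $\tau$ using Theorem~\ref{mainthm11} and an approximate-extremal-point estimate of the singular integral, decrease $\tau$ to its critical value $\tau_0$, then slide one solution against the other for uniqueness — is the same as the paper's. However, two of the mechanisms you invoke to close the argument do not actually work, and the paper resolves precisely these two points by different means.

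The first gap is in the near-diagonal estimate in Step~1. You propose to control the local part of $(-\Delta)_p^s u_\tau(x_k)-(-\Delta)_p^s u(x_k)$ ``using only the $C^{1,1}$ bound on $u$ around the approximate extremal point.'' But $u\in C^{1,1}_{loc}$ gives no uniform bound as $x_k\to\infty$; the only \emph{uniform} regularity available for fractional $p$-equations is H\"older (Lemma~\ref{holderestimate}), and this was explicitly flagged in the introduction as one of the obstacles the paper had to circumvent. The paper's fix is to add a compactly supported bump, $u+\varepsilon_k\Phi$, so that a genuine maximum $\bar x^k$ exists; at $\bar x^k$ the near-diagonal integrand has a definite sign by the max property — no regularity is used there — and the only place $C^{1,1}_{loc}$ enters is Lemma~\ref{appendix2}, which quantifies the error introduced by the bump. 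Your version, which works at a merely approximate extremum and leans on a local $C^{1,1}$ bound, does not produce an estimate uniform along the sequence.

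The second and more serious gap is in Step~2. You argue that if $\tau_0>0$, a Hopf lemma near $\partial\Omega$ plus interior positivity plus ``the decay $w_{\tau_0}\to 0$ at infinity'' let you feed the Step~1 argument back in and deduce $w_{\tau_0-\varepsilon}\ge0$. This does not cover the genuinely hard scenario: $w_{\tau_0}$ may tend to $0$ along a sequence $\{x^k\}$ escaping to infinity \emph{at bounded distance from $\partial\Omega$} (so $u(x^k)$ and $u_{\tau_0}(x^k)$ need not be in $(t_1,\mu)$ and the monotonicity of $f$ is unavailable), and neither the boundary Hopf estimate nor the behavior far from $\partial\Omega$ provides a positive uniform lower bound on the intermediate strip $\Omega_M$ as $|x'|\to\infty$. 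This is exactly Case~1 in the paper's Step~2, and the paper resolves it by a compactness argument: the uniform H\"older estimate (for which the two-sided ball condition is needed) plus Arzel\`a–Ascoli gives $w_{\tau_0}^k\to w_{\tau_0}^\infty$ uniformly, the tail inequality forces $u_{\tau_0}^\infty\equiv u^\infty$ off a ball, and iterating the translation yields $u^\infty(x^o+m\tau_0 e_n)=0$ for all $m$, contradicting $u^\infty\to\mu$. Your sketch has no substitute for this compactness step, and in fact the observation you place under ``Main obstacle'' — that $x_k$ may escape to infinity while $\mathrm{dist}(x_k,\partial\Omega)$ stays bounded — is precisely the case your argument cannot close.

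The uniqueness step inherits the same issue: asserting that the stopping value gives ``$v\equiv u_{\tau_0}$'' presupposes that the critical touching actually occurs, which again needs the compactness argument. Moreover, the paper does not deduce the contradiction from soft considerations about the subgraph; it constructs an explicit subsolution $u_D+\varepsilon\psi$ (using $\psi(x)=(1-|x|^2)_+^s$, Lemma~\ref{lemlizhang}, and Lemma~\ref{leme}) to obtain a quantitative lower bound $u(x)\ge c\,\mathrm{dist}(x,\partial\Omega)^s$ near $\partial\Omega$, and feeds that bound into the limiting configuration to get $u_{\tau_0}^\infty(x^0)>0$ while $v^\infty(x^0)=0$. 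Your proposal does not contain an analogue of this quantitative boundary lower bound.

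In short: the skeleton is right, but the bump-function device to replace approximate extrema by true ones, the uniform H\"older + Arzel\`a–Ascoli compactness to handle the escaping sequence in Step~2, and the explicit subsolution for the boundary lower bound in Step~3 are the three mechanisms that actually make the proof work, and all three are missing from, or replaced by insufficient substitutes in, your proposal.
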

\medskip

\begin{mrem}\label{shiyongp=2}
Our results in this paper adapt to the case of the fractional Laplacian where $p=2$.
As prototype in Theorem \ref{mainthm}, we may take $f(u)=u-u^3$ or $f(u)=u-u^2$. Then equation \eqref{fractional}
is the well-known fractional Allen-Cahn equation or the fractional Fisher-Kolmogorov equation,
which have been widely studied by many authors (please see \cite{CW,DGYW,SV} and the references therein).
\end{mrem}
\medskip
\begin{mrem}\label{remarkaa}
Theorem \ref{mainthm} was proved by Berestycki, Caffarelli and Nirenberg \cite{BCN1} for $s=1$ and $p=2$,
and Dipierro, Soave and Valdinoci \cite{DSV} for $s\in(0,1)$ and $p=2$ respectively.
They all assumed $f(\cdot)$ to be globally Lipschitz continuous. In this paper we only require $f(\cdot)$ to be continuous, which is weaker than the condition in
the classical results established by
Berestycki, Caffarelli and Nirenberg \cite{BCN1} and Dipierro, Soave and Valdinoci \cite{DSV}.
This is mainly because we employ a new and different idea here.
\end{mrem}

To illustrate the major differences between the traditional approach and our approach, let
$$ x =(x', x_n), \; u_\tau (x) = u(x',  x_n+\tau), \; \mbox{ and } w_\tau (x) = u(x) - u_\tau (x).$$
To obtain the result in Theorem \ref{mainthm}, one first needs to show that
$$w_\tau (x) \leq 0, \;\;  \forall \, \tau > 0, x \in \Omega.$$
This is achieved via a contradiction argument.
Suppose $\sup w_\tau = A > 0$, then there exists a sequence $\{x^k\}$, such that
$$w_\tau (x^k) \rightarrow A, \; \mbox{ as } k \rightarrow \infty.$$
Making the translation $w_\tau^k (x) = w_\tau (x+x^k)$,
in the linear operator case as in \cite{BCN1} and \cite{DSV},  they obtained
$$ (-\Delta)^s w_\tau^k(x) = c_k(x) w_\tau^k (x). $$
Here $c_k(x)$ are uniformly bounded due to the global Lipshitz continuity assumption on $f$. Based on this, they were able to show that
$$ w_\tau^k(x) \rightarrow w_\tau^\infty (x) \mbox{ and } (-\Delta)^s w_\tau^k (x) \rightarrow (-\Delta)^s w_\tau^\infty (x),$$
and therefore
$$ (-\Delta)^s w_\tau^\infty (x) = c_\infty(x) w_\tau^\infty (x) \mbox{ with } w_\tau^\infty (0) = A >0. $$
Then, they were able to derive a contradiction based on the properties of the solutions of the above equation. 

In our nonlinear operator case, the first difficulty is
$$(-\Delta)^s_p u(x) - (-\Delta)^s_p u_\tau (x) \neq (-\Delta)^s_p w_\tau (x).$$
Hence the simple \emph{maximum principle} such as Theorem \ref{unbonudedthm} can not be applied directly.
We will modify it in the proof of Theorem \ref{mainthm}.

The second difficulty is more subtle. So far, there have been very few regularity results on the solutions for fractional p-equations, the best we know is that the solutions $u$ are uniformly H\"{o}lder continuous if both $u$ and $f(u)$ are bounded. These are far from sufficient to guarantee the convergence of
$$ (-\Delta)^s_p u^k (x) - (-\Delta)^s_p u_\tau^k (x),$$
which requires $\{u^k\}$ to be uniformly $C^{1,1}$.

To circumvent this difficulty, instead of estimating along a sequence of equations in the whole domain $\Omega$, we estimate the singular integrals defining $(-\Delta)^s_p u (x) - (-\Delta)^s_p u_\tau (x)$ only along a sequence of points, the approximate maximum points $x^k$. This new idea not only enable us to deal with the situation where the lack of the regularity result is known, but also enable us to weaken the condition on the nonlinearity $f(u)$.

Finally, we consider a special case where $\Omega$ is an upper half space:
\begin{equation}\label{shangbankongjian}\begin{cases}
(-\Delta)_p^s u(x)=f(u(x)), &x\in\mathbb{R}^n_+,\\
u(x)>0, &x\in\mathbb{R}^n_+,\\
u(x)=0, &x\in \mathbb{R}^n\setminus {\mathbb{R}^n_+}.
\end{cases}\end{equation}
\medskip

For this particular domain, we are able to use the \emph{sliding method} in any direction to obtain a stronger result.

\begin{mthm}\label{bankongjian}
Suppose that $u\in C_{loc}^{1,1}\cap \mathcal{L}_{sp}$ be a bounded solution of \eqref{shangbankongjian}.
Assume that  $f$ is continuous and satisfies conditions (a)-(c) for some $0<t_0<t_1<\mu$.

Then $u$ is strictly monotone increasing in $x_n$, and moreover it depends on $x_n$ only.

Furthermore the bounded solution of \eqref{shangbankongjian} is unique.
\end{mthm}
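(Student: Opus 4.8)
\emph{Proof proposal.}

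The plan is to derive Theorem \ref{bankongjian} from Theorem \ref{mainthm} together with the extra translational symmetry of the half space. Note first that $\mathbb{R}^n_+=\{x_n>0\}$ is exactly the region above the graph of $\varphi\equiv 0$ and satisfies the uniform two-sided ball condition, so problem \eqref{shangbankongjian} is a particular instance of \eqref{fractional}. Hence Theorem \ref{mainthm} applies directly and gives at once that every bounded solution $u$ is strictly monotone increasing in $x_n$ and that the bounded solution of \eqref{shangbankongjian} is unique. Along the way one also has, from Theorems \ref{xiaoyu1} and \ref{mainthm11}, that $0<u<\mu$ in $\mathbb{R}^n_+$ and $u(x)\to\mu$ uniformly as $x_n\to\infty$; these facts are available below.

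It remains to show that $u$ depends on $x_n$ only. Fix a vector $e\in\mathbb{R}^{n-1}\times\{0\}$ parallel to $\partial\mathbb{R}^n_+$ and $\tau\in\mathbb{R}$, and set $u_\tau(x):=u(x+\tau e)$. Since the translation $x\mapsto x+\tau e$ maps $\mathbb{R}^n_+$ onto itself and $\mathbb{R}^n\setminus\mathbb{R}^n_+$ onto itself, and since $(-\Delta)^s_p$ commutes with translations, $u_\tau$ is again a bounded function in $C_{loc}^{1,1}\cap\mathcal{L}_{sp}$ solving \eqref{shangbankongjian} with the same $f$: indeed $(-\Delta)^s_p u_\tau(x)=\big[(-\Delta)^s_p u\big](x+\tau e)=f(u(x+\tau e))=f(u_\tau(x))$ for $x\in\mathbb{R}^n_+$, while $u_\tau>0$ in $\mathbb{R}^n_+$ and $u_\tau=0$ on $\mathbb{R}^n\setminus\mathbb{R}^n_+$. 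By the uniqueness already established, $u_\tau\equiv u$ on $\mathbb{R}^n$. Since $e$ and $\tau$ are arbitrary, $u$ is invariant under every translation parallel to $\partial\mathbb{R}^n_+$, i.e. $u(x)=u(x_n)$; combined with the first part, $u=u_1(x_n)$ with $u_1$ strictly increasing.

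Alternatively---and this is the route suggested by the phrase ``sliding in any direction''---one may bypass uniqueness and argue directly. For $e$ parallel to $\partial\mathbb{R}^n_+$ and $\tau>0$ put $w_\tau=u-u_\tau$; then $w_\tau=0$ on $\mathbb{R}^n\setminus\mathbb{R}^n_+$, and since $u,u_\tau\to\mu$ uniformly as $x_n\to\infty$, also $w_\tau\to 0$ there. Running the contradiction argument of Theorem \ref{mainthm}---assume $A:=\sup_{\mathbb{R}^n_+}w_\tau>0$, take approximate maximum points $x^k$ with $w_\tau(x^k)\to A$ (confined to a bounded strip $0<x_n<R$ by the asymptotics), translate parallel to $\partial\mathbb{R}^n_+$ and extract a limit using the uniform H\"older bound available when $u$ and $f(u)$ are bounded, and estimate the singular integral defining $(-\Delta)^s_p u(x^k)-(-\Delta)^s_p u_\tau(x^k)$ using the monotonicity of $t\mapsto|t|^{p-2}t$ (for $p\ge 2$) together with condition (c) to control $f(u)-f(u_\tau)$---forces $w_\tau\le 0$ in $\mathbb{R}^n_+$ for every $\tau>0$. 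Applying this with $e$ replaced by $-e$ yields $u(x)\le u(x+\tau e)\le u(x)$, so $u$ is constant in the direction $e$, and again $u=u(x_n)$.

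The monotonicity and uniqueness parts need nothing beyond Theorem \ref{mainthm}. The only genuinely new point is the $x_n$-only dependence; in the direct approach the delicate step is precisely the one already met in the proof of Theorem \ref{mainthm}, namely passing to the limit in $(-\Delta)^s_p u(x^k)-(-\Delta)^s_p u_\tau(x^k)$ along the approximate maxima without a $C^{1,1}$ compactness estimate, and also ensuring the limiting values lie in the range where condition (c) applies. If instead one invokes uniqueness, that difficulty is entirely absorbed into Theorem \ref{mainthm} and the deduction is immediate; this is the cleaner presentation, and I would adopt it as the main proof.
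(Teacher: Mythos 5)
Your main argument (Theorem \ref{mainthm} applied to $\Omega=\mathbb{R}^n_+$, then uniqueness plus invariance of the problem under horizontal translations to get $u_\tau\equiv u$ for every $\tau$ parallel to the boundary) is correct, and it is a genuinely different route from the paper's. The paper does not invoke uniqueness for this part. Instead it slides in an arbitrary oblique direction $\nu$ with $\nu_n>0$: such a translation still pushes $\mathbb{R}^n_+$ into itself and carries points away from the boundary, so the starting strip of Step 1 and condition (c) remain available, and the full sliding argument gives $u(x+\tau\nu)>u(x)$ for all $\tau>0$. It then fixes $x$, sends $\nu_n\to 0$, and uses the continuity of $\nabla u$ (available since $u\in C^{1,1}_{loc}$) to conclude $\partial_\nu u(x)\geq 0$ for every horizontal $\nu$; replacing $\nu$ by $-\nu$ gives $\partial_\nu u\equiv 0$, hence $u=u(x_n)$. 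Your route is cleaner and avoids the $\nu_n\to 0$ passage, at the cost of leaning on the heavier Step 3 (uniqueness) of Theorem \ref{mainthm}; the paper's route derives the one-dimensionality directly from sliding, without appealing to uniqueness at all.

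Your sketched alternative (sliding directly in a horizontal direction $e$) is closer in spirit to what the phrase ``sliding in any direction'' might suggest, but it has exactly the gap you flag: for horizontal $e$ there is no ``large $\tau$'' regime in which $u_\tau$ is forced into $(t_1,\mu)$ on the strip containing the approximate maxima, so the step that invokes condition (c) to control $f(u)-f(u_\tau)$ (or equivalently the Step 1 starting point) is not available. The paper's choice of oblique $\nu_n>0$ with the limit $\nu_n\to 0$ taken only at the very end is precisely the device that keeps condition (c) in play throughout. You were therefore right to regard route 2 as heuristic and adopt the uniqueness argument as the main proof.
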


\medskip

The rest of our paper is organized as follows. In section 2, we prove the \emph{maximum principles} in unbounded domains
and hence establish Theorem \ref{unbonudedthm}. Based on the \emph{maximum principles}, we obtain Theorem \ref{xiaoyu1}.
In section 3, we carry out our proof of Theorem \ref{mainthm11} by using a \emph{sliding method} on ball regions.
In section 4, we prove the monotonicity and uniqueness by estimating the singular integrals along the
\emph{approximate maximum points} in the process of \emph{sliding} and
thus obtain Theorem \ref{mainthm}.
Section 5 is devoted to proving Theorem \ref{bankongjian}.

\medskip
In the following, we will use $C$ to denote a general positive constant that may depend on $n$, $s$ and $p$, and whose value may differ from line to line.

\section{The proof of Theorem \ref{unbonudedthm} and Theorem \ref{xiaoyu1}}
In this section, we establish the following \emph{maximum principles} in unbounded domains.

\begin{thm}\label{unbonudedthma}
Let $D$ be a open set in $\mathbb{R}^n$, possibly unbounded and disconnected. Assume that $\overline{D}$ is disjoint
from the closure of an infinite open domain $\Sigma$ satisfying
\begin{equation}\label{jixian}
\underline{\lim}_{j\rightarrow\infty}\frac{\big|\big(B_{2^{j+1}r}(x)\setminus{B_{2^{j}r}(x)}\big)\cap\Sigma\big|}
{|B_{2^{j+1}r}(x)\setminus B_{2^{j}r}(x)|}\geq c_0,\ \ \forall x\in\mathbb{R}^n,
\end{equation}
for some $c_0>0$ and $r>0$. Suppose  $u(x)$ is in $C_{loc}^{1,1}\cap \mathcal{L}_{sp}$, bounded from above,
and satisfies
\begin{equation}\label{unbdd}\begin{cases}
(-\Delta)_p^s u(x)+c(x)u(x)\leq0,& x\in D,\\
u(x)\leq0, &x\in \mathbb{R}^n\setminus D,
\end{cases}
\end{equation}
for some nonnegative function $c(x)$.

Then $u(x)\leq0$ in $D$.
\end{thm}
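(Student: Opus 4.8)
The plan is to argue by contradiction. Suppose $A := \sup_D u(x) > 0$. Since $u$ is bounded from above, $A < \infty$, so there is a sequence $\{x^k\} \subset D$ with $u(x^k) \to A$. The maximum may not be attained, and the $x^k$ could escape to infinity, so the main idea is to estimate the principal-value integral defining $(-\Delta)_p^s u(x^k)$ directly and extract a contradiction with the differential inequality $(-\Delta)_p^s u(x^k) + c(x^k)u(x^k) \leq 0$. Since $c(x^k) \geq 0$ and $u(x^k) \to A > 0$, for large $k$ we have $(-\Delta)_p^s u(x^k) \leq -c(x^k)u(x^k) \leq 0$; so it suffices to show $\liminf_k (-\Delta)_p^s u(x^k) > 0$, which is the crux.

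To do this, first choose $x^k$ so that $u(x^k) > A - \eps_k$ with $\eps_k \to 0$; by a standard device (e.g. using that $u(x^k)$ is nearly the supremum) I would arrange, after a small perturbation of the point, that $x^k$ is an \emph{approximate maximum} in the quantitative sense that $u(x^k) - u(y) \geq -\eps_k$ for all $y$, i.e. $u(y) \leq u(x^k) + \eps_k$ everywhere. Then split the integral
\begin{equation*}
(-\Delta)_p^s u(x^k) = C_{n,s,p}\,\mathrm{P.V.}\int_{\mathbb{R}^n}\frac{|u(x^k)-u(y)|^{p-2}[u(x^k)-u(y)]}{|x^k-y|^{n+sp}}\,dy
\end{equation*}
into the region near $x^k$ and the region far from $x^k$. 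Near $x^k$ (inside $B_r(x^k)$, say), the $C^{1,1}_{loc}$ regularity controls the singular part: the principal value over $B_r(x^k)$ is bounded below by $-C$ uniformly in $k$ (this uses a uniform $C^{1,1}$ bound, which in the setting of Theorem~\ref{unbonudedthma} must be obtained from the Hölder/interior estimates together with the hypotheses — this is a point to be careful about, but for the stated maximum principle the assumption $u \in C^{1,1}_{loc}$ plus boundedness suffices on a fixed ball after translating). On the far region $\mathbb{R}^n \setminus B_r(x^k)$, since $u(y) \leq u(x^k) + \eps_k$, the integrand $|u(x^k)-u(y)|^{p-2}[u(x^k)-u(y)]$ is bounded below by $-\eps_k^{p-1}$, but more importantly we gain a genuinely positive contribution from $\Sigma$.

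The decisive step is the contribution of the exterior set. On $\mathbb{R}^n \setminus D \supset \Sigma$ we have $u(y) \leq 0$, hence $u(x^k) - u(y) \geq u(x^k) \geq A - \eps_k$, so the integrand there is at least $(A-\eps_k)^{p-1} > 0$. Using the dyadic density hypothesis \eqref{jixian}: for each annulus $B_{2^{j+1}r}(x^k)\setminus B_{2^j r}(x^k)$, a fraction $\geq c_0$ (in measure, for $j$ large along a subsequence) lies in $\Sigma$, and on that piece $|x^k - y|^{n+sp} \leq (2^{j+1}r)^{n+sp}$, so the contribution of that annulus is at least
\begin{equation*}
(A-\eps_k)^{p-1}\,\frac{c_0\,|B_{2^{j+1}r}\setminus B_{2^j r}|}{(2^{j+1}r)^{n+sp}} \;\geq\; c\,(A-\eps_k)^{p-1}\,(2^j r)^{-sp},
\end{equation*}
and summing over $j \geq 1$ gives a positive lower bound $\geq c_1 (A-\eps_k)^{p-1} r^{-sp}$ independent of $k$. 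Meanwhile the negative part of the far integral is $O(\eps_k^{p-1})$ plus the $\mathcal{L}_{sp}$-tail which can be made small, and the near-$x^k$ part is $\geq -C$; but rescaling the ball $B_\rho(x^k)$ with $\rho$ small shows the near part is in fact $o(1)$ or at worst bounded, and one compares constants. Choosing the splitting radius and using $\eps_k \to 0$, we conclude $\liminf_k (-\Delta)_p^s u(x^k) \geq c_1 A^{p-1} r^{-sp} > 0$, contradicting $(-\Delta)_p^s u(x^k) \leq 0$. Hence $A \leq 0$ and $u \leq 0$ in $D$.

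I expect the main obstacle to be making the ``approximate maximum point'' argument fully rigorous — specifically, controlling the singular integral near $x^k$ uniformly in $k$ (so that the positive gain from $\Sigma$ genuinely dominates) while only knowing $u \in C^{1,1}_{loc}$ rather than uniformly $C^{1,1}$. The standard fix is to not insist $x^k$ be a near-maximizer of $u$ itself but to perturb $u$ by a small smooth bump concentrated at scale $\to\infty$ or, more simply, to work with a cutoff and exploit that the $C^{1,1}_{loc}$ seminorm on a \emph{fixed-size} ball around the translated point is what enters, which can be kept bounded by choosing $x^k$ as a true maximizer of $u$ restricted to an exhausting sequence of large balls and controlling the boundary terms via the $\mathcal{L}_{sp}$ condition. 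The positivity gain from \eqref{jixian} is robust, so once the near-field estimate is pinned down the contradiction follows cleanly.
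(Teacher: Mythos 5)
Your overall strategy --- contradiction along a sequence of approximate maximum points, with the decisive positive contribution coming from the integral over $\Sigma$ via the dyadic density condition \eqref{jixian} --- is exactly the paper's strategy, and your far-field analysis is sound: on $\Sigma$ the integrand is at least $(A-\varepsilon_k)^{p-1}$, the dyadic sum gives a gain bounded below by a positive constant, and the negative far-field part is $O(\varepsilon_k^{p-1})$ because $u(y)\leq A\leq u(x^k)+\varepsilon_k$ everywhere. The genuine gap is the one you yourself flag: the near-field part of the principal value at $x^k$. Knowing only $u\in C^{1,1}_{loc}$ gives no control of $\mathrm{P.V.}\int_{B_\rho(x^k)}G(u(x^k)-u(y))\,|x^k-y|^{-n-sp}\,dy$ that is uniform as $x^k\to\infty$, and ``bounded below by $-C$'' would in any case not suffice, since the gain from $\Sigma$ is a fixed constant and nothing forces $C$ to be smaller than it. Your fallback (take $x^k$ a true maximizer of $u$ over an exhausting sequence of large balls) does not repair this: such a maximizer need not be an interior local maximum, and even when it is, the $C^{1,1}$ seminorm of $u$ on a unit ball around it is still not uniformly bounded in $k$.

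The paper closes exactly this gap with the device you mention only in passing. It adds a small scaled bump $\varepsilon_k\Phi_{r_k}$, with $\Phi_{r_k}(x)=\Phi((x-x^k)/r_k)$ and $r_k=\mathrm{dist}(x^k,\partial\Sigma)$, chosen so that $u+\varepsilon_k\Phi_{r_k}$ attains a genuine \emph{global} maximum at some $\bar x^k\in B_{r_k}(x^k)$. At such a point the near-field integrand of $(-\Delta)_p^s u+(-\Delta)_p^s(\varepsilon_k\Phi_{r_k})$ is pointwise nonnegative by the monotonicity of $G(t)=|t|^{p-2}t$ alone, so no regularity of $u$ near $\bar x^k$ is needed at all; the price paid, $(-\Delta)_p^s(\varepsilon_k\Phi_{r_k})(\bar x^k)=\varepsilon_k^{p-1}r_k^{-sp}\,[(-\Delta)_p^s\Phi]\bigl((\bar x^k-x^k)/r_k\bigr)=O(\varepsilon_k^{p-1}r_k^{-sp})$ by scaling, is dominated by the gain of order $r_k^{-sp}$ from $\Sigma$ once $\varepsilon_k$ is small, which is achievable because $u(x^k)\geq\gamma_k A$ with $\gamma_k\to1$. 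One also needs the elementary inequality $G(t_1)+G(t_2)\geq 2^{2-p}G(t_1+t_2)$ (Lemma \ref{appendix}) to combine the two far-field terms into $G$ of the sum before invoking $u\leq 0$ on $\Sigma$. Until you actually execute this (or an equivalent) perturbation argument, the near-field estimate --- and hence the proof --- is incomplete.
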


\medskip

\begin{proof}
Suppose on the contrary, there is some points $x$ such that $u(x)>0$ in $D$, then
\begin{equation}\label{larg}
0<A:=\sup_{x\in\mathbb{R}^n}u(x)<\infty.
\end{equation}
There exists sequences $x^k\in D$ and $\gamma_k\rightarrow1$ $(\gamma_k\in (0,1))$ as $k\rightarrow\infty$ such that
\begin{equation}\label{key}
u(x^k)\geq \gamma_k A.
\end{equation}
Let
\begin{equation}
\Phi(x)=\begin{cases}
&c_ne^{\frac{1}{|x|^2-4}}, \ \ |x|< 2,\\
&0,\ \ \ \ \ \ \ \ \ \ \ |x|\geq 2.
\end{cases}\end{equation}
It is easy to check that $\Phi$ is radially decreasing from the origin, and is in $C^\infty_0(B_2(0))$.

Define
\begin{equation}\label{fai}
\Phi_{r_k}(x):=\Phi(\frac{x-x^k}{r_k}).
\end{equation}
For any $x\in B_{{2r_k}}(x^k)\setminus{B_{r_k}(x^k)},$ we can take $\varepsilon_k>0$ such that
\begin{equation}\label{z1}
u(x^k)+\varepsilon_k\Phi_{r_k}(x^k)\geq A+\varepsilon_k\Phi_{r_k}(x^k+r_ke)\geq u(x)+\varepsilon_k\Phi_{r_k}(x).
\end{equation}
where $e$ is any unit vector in $\mathbb{R}^n$.

Therefore, there exists $\bar{x}^k\in B_{r_k}(x^k)$ such that
\begin{equation}\label{z2}
u(\bar{x}^k)+\varepsilon_k\Phi_{r_k}(\bar{x}^k)=\max_{x\in B_{2r_k}(x^k)}[u(x)+\varepsilon_k\Phi_{r_k}(x)].
\end{equation}
As a consequence,
$$u(\bar{x}^k)+\varepsilon_k\Phi_{r_k}(\bar{x}^k)\geq u(x^k)+\varepsilon_k\Phi_{r_k}(x^k),$$
which implies
$$u(\bar{x}^k)\geq u(x^k)+\varepsilon_k\Phi_{r_k}(x^k)-\varepsilon_k\Phi_{r_k}(\bar{x}^k)\geq u(x^k).$$

It follows from \eqref{key} that
\begin{equation}\label{zzz}
u(\bar{x}^k)\geq \gamma_k A.
\end{equation}
From \eqref{z1} and \eqref{z2}, we deduce that

\begin{equation}\label{z33}
u(\bar{x}^k)+\varepsilon_k\Phi_{r_k}(\bar{x}^k)\geq A\geq u(x),\ \forall x\in \mathbb{R}^n.
\end{equation}
Hence $\bar{x}^k$ is a maximum of the function $u+\varepsilon_k\Phi_{r_k}$ in $\mathbb{R}^n$.

Let $G(t)=|t|^{p-2}t$, we calculate
\begin{equation}\label{iiii}\begin{split}
&(-\Delta)_p^s u(\bar{x}^k)+\frac{\varepsilon_k^{p-1}}{r_k^{sp}}[(-\Delta)_p^s \Phi](\frac{\bar{x}^k-x^k}{r_k})\\
&=(-\Delta)_p^s u(\bar{x}^k)+(-\Delta)_p^s [\varepsilon_k\Phi_{r_k}(\bar{x}^k)]\\
&=C_{n,s,p} P.V.\int_{\mathbb{R}^n}\frac{G(u(\bar{x}^k)-u(y))+G(\varepsilon_k\Phi_{r_k}(\bar{x}^k)-\varepsilon_k\Phi_{r_k}(y))}{|\bar{x}^k-y|^{n+sp}}dy\\
&=C_{n,s,p} P.V.\int_{B_{2r_k}(x^k)}\frac{G(u(\bar{x}^k)-u(y))+G(\varepsilon_k\Phi_{r_k}(\bar{x}^k)-\varepsilon_k\Phi_{r_k}(y))}{|\bar{x}^k-y|^{n+sp}}dy\\
&\ \ \ +C_{n,s,p} \int_{\mathbb{R}^n\setminus{B_{2r_k}(x^k)}}\frac{G(u(\bar{x}^k)-u(y))+G(\varepsilon_k\Phi_{r_k}(\bar{x}^k))}{|\bar{x}^k-y|^{n+sp}}dy\\
&=I_1+I_2.
\end{split}\end{equation}
For $I_1$, we first notice that
$$G(u(\bar{x}^k)-u(y))+G(\varepsilon_k\Phi_{r_k}(\bar{x}^k)-\varepsilon_k\Phi_{r_k}(y))\geq 0$$
due to the strict monotonicity of $G$ and the fact
$$u(\bar{x}^k)+\varepsilon_k\Phi_{r_k}(\bar{x}^k)-u(y)-\varepsilon_k\Phi_{r_k}(y)\geq0,$$
for any $y\in B_{2r_k}(x^k)$. Thus
\begin{equation}\label{ccc}
I_1\geq0.
\end{equation}
Now we estimate $I_2$, it follows from Lemma \ref{appendix} in Appendix and \eqref{z33} that
\begin{equation}\label{ddd}\begin{split}
I_2&= C_{n,s,p}\int_{\mathbb{R}^n\setminus{B_{2r_k}(x^k)}}\frac{G(u(\bar{x}^k)-u(y))+G(\varepsilon_k\Phi_{r_k}(\bar{x}^k))}{|\bar{x}^k-y|^{n+sp}}dy\\
&\geq 2^{2-p}C_{n,s,p}\int_{\mathbb{R}^n\setminus{B_{2r_k}(x^k)}}\frac{G[u(\bar{x}^k)+\varepsilon_k\Phi_{r_k}(\bar{x}^k)-u(y)]}{|\bar{x}^k-y|^{n+sp}}dy\\
&\geq 2^{2-p}C_{n,s,p}\int_{\big(\mathbb{R}^n\setminus{B_{2r_k}(x^k)}\big)\cap\Sigma}\frac{G[u(\bar{x}^k)+\varepsilon_k\Phi_{r_k}(\bar{x}^k)-u(y)]}
{|\bar{x}^k-y|^{n+sp}}dy\\
&\geq A^{p-1}2^{2-p}C_{n,s,p}\int_{\big(\mathbb{R}^n\setminus{B_{2r_k}(x^k)}\big)\cap\Sigma}\frac{1}{|\bar{x}^k-y|^{n+sp}}dy\\
&\geq c_1\int_{\Sigma\setminus{B_{2r_k}(x^k)}}\frac{1}{|x^k-y|^{n+sp}}dy,
\end{split}\end{equation}
where the last inequality we have used the fact
\begin{equation*}
|\bar{x}^k-y|\leq |\bar{x}^k-x^k|+|x^k-y|
\leq \frac{3}{2}|x^k-y|.
\end{equation*}
We choose $r_k=dist(x^k,\partial\Sigma)$, by \eqref{jixian}, there exists $j_0\geq 1$ such that
\begin{equation}\label{dddff}\begin{split}
I_2&\geq c_1\int_{\Sigma\setminus{B_{2r_k}(x^k)}}\frac{1}{|x^k-y|^{n+sp}}dy\\
&\geq c_1\sum_{j=j_0}^{\infty}\frac{\big|\big(B_{2^{j+1}r_k}(x^k)\setminus{B_{2^{j}r_k}(x^k)}\big)\cap\Sigma\big|}{(2^{j+1}r_k)^{n+sp}}\\
&\geq c' \sum_{j=j_0}^{\infty} \frac{1}{(2^{j+1}r_k)^{sp}}= \frac{2^{-j_0}c'}{r_k^{sp}}.
\end{split}\end{equation}
where $c'>0$ depending on $c_0$ and $c_1$.

On the other hand, by \eqref{unbdd} and \eqref{zzz}, we deduce that
$$(-\Delta)_p^su(\bar{x}^k)\leq0,$$
which combining with \eqref{iiii}, \eqref{ccc} and \eqref{dddff}, yields
\begin{equation*}\begin{split}
&\frac{\varepsilon_k^{p-1}}{r_k^{sp}}[(-\Delta)_p^s \Phi](\frac{\bar{x}^k-x^k}{r_k})\\
&\geq(-\Delta)_p^s u(\bar{x}^k)+\frac{\varepsilon_k^{p-1}}{r_k^{sp}}[(-\Delta)_p^s \Phi](\frac{\bar{x}^k-x^k}{r_k})\\
&=(-\Delta)_p^s u(\bar{x}^k)+(-\Delta)_p^s [\varepsilon_k\Phi_{r_k}(\bar{x}^k)]\\
&\geq \frac{2^{-j_0}c'}{r_k^{sp}}.
\end{split}\end{equation*}
In fact, it is easy to check $\big|[(-\Delta)_p^s \Phi](\frac{\bar{x}^k-x^k}{r_k})\big|\leq c$ for $p>2$ (see Lemma 5.2 in Chen and Li \cite{CL2}).
Then we arrive at
\begin{equation}\label{contradiction}
\varepsilon_k^{p-1}\geq 2^{-j_0} c_1.
\end{equation}

Recalling \eqref{key} and \eqref{z1}, we can take $\varepsilon_k$ sufficiently small provided $\gamma_k$ is close to 1 to derive a contradiction with \eqref{contradiction}, and thus complete the proof of Theorem \ref{unbonudedthma}.
\end{proof}

\medskip

To prove Theorem \ref{mainthm11aa}, we also need the following \emph{strong maximum principle}.
\begin{lem}\label{Strongmax}(Strong maximum principle)
Let $D$ be an open set in $\mathbb{R}^n$, possibly unbounded and disconnected. Assume that both $u$ and $v$ are continuous functions
in $C_{loc}^{1,1}(D)\cap \mathcal{L}_{sp}$
and satisfy
\begin{equation}\label{Strong}\begin{cases}
(-\Delta)_p^s u(x)-(-\Delta)_p^s v(x)=f(u(x))-f(v(x)), \ x\in D,\\
u(x)\geq v(x),\ \ \  \ \ \ \ \ \ \ \ \ \ \ \ \ \ \ \ \ \ \ x\in \mathbb{R}^n.
\end{cases}\end{equation}
where $f$ is a continuous function.

Then either $u(x)> v(x)$, or $u(x)\equiv v(x)$ in $\mathbb{R}^n$.
\end{lem}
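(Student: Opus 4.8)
The plan is a touching-point (strong maximum principle) argument, carried out directly on the singular integrals, since for the fractional $p$-Laplacian one does not have $(-\Delta)_p^s u-(-\Delta)_p^s v=(-\Delta)_p^s(u-v)$. Set $w:=u-v\geq 0$ on $\mathbb{R}^n$, and recall $G(t)=|t|^{p-2}t$, a strictly increasing bijection of $\mathbb{R}$. Assume $u\not\equiv v$. If $u>v$ everywhere in $D$ there is nothing to prove, so suppose there is a point $x_0\in D$ with $w(x_0)=0$. Since $w\geq 0$ on all of $\mathbb{R}^n$, $x_0$ is a global minimum of $w$, so $w(y)\geq 0=w(x_0)$ for every $y\in\mathbb{R}^n$, and in particular
\[
\big(u(x_0)-u(y)\big)-\big(v(x_0)-v(y)\big)=w(x_0)-w(y)=-w(y)\leq 0,\qquad \forall\,y\in\mathbb{R}^n .
\]

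Because $u,v\in C^{1,1}_{loc}$ near $x_0$ and $u,v\in\mathcal{L}_{sp}$, the principal value integrals defining $(-\Delta)_p^s u(x_0)$ and $(-\Delta)_p^s v(x_0)$ are well defined, and for each fixed $\eps>0$ the two integrals over $\mathbb{R}^n\setminus B_\eps(x_0)$ are absolutely convergent (using $u,v\in\mathcal{L}_{sp}$ to control the tails), so they may be combined:
\[
\frac{1}{C_{n,s,p}}\big[(-\Delta)_p^s u(x_0)-(-\Delta)_p^s v(x_0)\big]
=\lim_{\eps\to 0^+}\int_{\mathbb{R}^n\setminus B_\eps(x_0)}
\frac{G\big(u(x_0)-u(y)\big)-G\big(v(x_0)-v(y)\big)}{|x_0-y|^{n+sp}}\,dy .
\]
By the displayed inequality above together with the monotonicity of $G$, the integrand is $\leq 0$ for every $y$. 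On the other hand, the equation in \eqref{Strong} and $u(x_0)=v(x_0)$ give $(-\Delta)_p^s u(x_0)-(-\Delta)_p^s v(x_0)=f(u(x_0))-f(v(x_0))=0$, so the limit on the right is $0$.

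Now observe that the map $\eps\mapsto\int_{\mathbb{R}^n\setminus B_\eps(x_0)}(\cdots)\,dy$ is non-increasing as $\eps\downarrow 0$, since the integrand is nonpositive and the domain grows; as its limit is $0$, it must equal $0$ for every $\eps>0$, forcing the nonpositive integrand to vanish for a.e.\ $y$, first on each $\mathbb{R}^n\setminus B_\eps(x_0)$ and hence a.e.\ on $\mathbb{R}^n$. Therefore $G(u(x_0)-u(y))=G(v(x_0)-v(y))$ a.e.; since $G$ is injective, $u(x_0)-u(y)=v(x_0)-v(y)$, i.e.\ $w(y)=w(x_0)=0$, for a.e.\ $y\in\mathbb{R}^n$. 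By continuity of $w=u-v$ this gives $u\equiv v$ on $\mathbb{R}^n$, contradicting $u\not\equiv v$. Hence either $u\equiv v$ in $\mathbb{R}^n$, or $u>v$ throughout $D$, which is the asserted dichotomy.

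The two points that require care — and which are the main obstacles — are (i) legitimately rewriting the difference of the two principal-value integrals as the principal-value integral of the difference: this is handled by freezing $\eps>0$, where each piece is absolutely integrable away from the singularity thanks to $u,v\in\mathcal{L}_{sp}$, and only then letting $\eps\to 0$; and (ii) deducing a.e.\ vanishing of a sign-definite integrand from the vanishing of its principal value, for which one cannot argue naively but must use the monotonicity in $\eps$ noted above. The remaining ingredients — the reduction to an interior touching point, the strict monotonicity (injectivity) of $G$, and upgrading an a.e.\ identity to an everywhere identity via continuity — are routine and require no regularity beyond $u,v\in C^{1,1}_{loc}\cap\mathcal{L}_{sp}$; in particular, unlike in Theorem~\ref{unbonudedthma}, no auxiliary cutoff/perturbation $\eps_k\Phi_{r_k}$ is needed here because the minimum is genuinely attained at $x_0\in D$.
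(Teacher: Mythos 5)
Your proof is correct and follows essentially the same touching-point argument as the paper: evaluate the difference of the two singular integrals at a point where $w=u-v$ attains its global minimum $0$, observe the integrand $G(u(x_0)-u(y))-G(v(x_0)-v(y))$ is nonpositive by monotonicity of $G$, and conclude from the equation that it must vanish identically. The only difference is that you carefully justify the two steps the paper leaves implicit (combining the principal values and passing from a vanishing principal value of a sign-definite integrand to its a.e.\ vanishing), and you correctly note that the touching point must lie in $D$ for the equation to apply — both welcome refinements of the same argument.
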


\begin{proof}
Let
$$w(x)=u(x)-v(x).$$
Assume that there exists $x^0$ in $\mathbb{R}^n$ such that
$$w(x^0)=\min_{x\in \mathbb{R}^n}w(x)=0.$$
It follows from \eqref{Strong} that
\begin{equation}\label{Strongs}\begin{split}
&(-\Delta)_p^s u(x^0)-(-\Delta)_p^s v(x^0)\\
&=C_{n,s,p} P.V.\int_{\mathbb{R}^n}\frac{G(u(x^0)-u(y))-G(v(x^0)-v(y))}{|x^0-y|^{n+sp}}dy.\\
\end{split}\end{equation}
Since
$$[u(x^0)-u(y)]-[v(x^0)-v(y)]=w(x^0)-w(y)=-w(y)\leq0,$$
and due to the monotonicity of $G$, we have

\begin{equation}\label{Strongstr}
G(u(x^0)-u(y))-G(v(x^0)-v(y))\leq0.
\end{equation}
Therefore, by \eqref{Strong}, \eqref{Strongs} and \eqref{Strongstr}, we must have
$$w(y)=0,\ \ \ \text{for any}\ \ y\in\mathbb{R}^n.$$

This completes the proof of the lemma.
\end{proof}

We consider
\begin{equation}\label{fractionalbianjiewaiaa}\begin{cases}
(-\Delta)_p^s u(x)=f(u(x)),& x\in\Omega,\\
u(x)>0, & x\in\Omega.\\
\end{cases}\end{equation}

Based on the above two \emph{maximum principles}, we prove the following theorem.

\begin{thm}\label{mainthm11aa}
Let $u\in C_{loc}^{1,1}(\Omega)\cap \mathcal{L}_{sp}$ be a bounded solution of \eqref{fractionalbianjiewaiaa} and
$$u(x)<\mu, \;\; x\in \mathbb{R}^n\setminus \Omega.$$

Assume that $f$ is a continuous function satisfying

(a) There exists $\mu>0$ such that $f(t)>0$
on $(0,\mu)$, and $f(t)\leq0$ for $t\geq\mu$.

Then $u<\mu$ in $\Omega$.
\end{thm}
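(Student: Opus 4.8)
The plan is to argue by contradiction using the maximum principle in unbounded domains (Theorem \ref{unbonudedthma}). Suppose the conclusion fails, so that $\sup_\Omega u \ge \mu$. Since $u$ is bounded, set $M := \sup_{\mathbb{R}^n} u$; by hypothesis $u < \mu$ outside $\Omega$, so the supremum $M \ge \mu$ is ``attained'' only (in the limiting sense) at points of $\Omega$. The key observation I want to exploit is the sign condition (a): where $u$ is close to or above $\mu$, the right-hand side $f(u)$ is nonpositive, so $u$ behaves like a subsolution of $(-\Delta)^s_p u \le 0$ there. The obstacle is that $M$ may not equal $\mu$ and may exceed it, and the set $\{u \ge \mu\}$ need not be a domain with a good exterior structure, so I cannot apply Theorem \ref{unbonudedthma} directly to $u$; instead I would apply the approximate-maximum-point technique to a truncation.

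First I would introduce $v(x) := u(x) - \mu$ and consider the domain $D := \{x \in \mathbb{R}^n : u(x) > \mu\}$. On $D$ we have $u \ge \mu$, hence $f(u(x)) \le 0$ by (a), so $(-\Delta)^s_p u(x) = f(u(x)) \le 0$ in $D$; and on $\mathbb{R}^n \setminus D$ we have $v = u - \mu \le 0$. However, to feed this into Theorem \ref{unbonudedthma} I need $(-\Delta)^s_p v \le 0$ on $D$, not $(-\Delta)^s_p u \le 0$; since $(-\Delta)^s_p$ does not commute with adding constants, these differ. So the cleaner route is to re-run the approximate-maximum-point argument of Theorem \ref{unbonudedthma} directly on $u$ itself: assume $A := \sup_{\mathbb{R}^n} u = \sup_\Omega u \ge \mu$, pick $x^k \in \Omega$ with $u(x^k) \to A$, build the perturbed maximizers $\bar x^k$ exactly as in the proof above (using the cutoff $\Phi_{r_k}$ with $r_k = \mathrm{dist}(x^k,\partial\Omega)$, noting $\bar x^k \in \Omega$ since $u < \mu \le A$ off $\Omega$), and estimate the singular integral defining $(-\Delta)^s_p u(\bar x^k)$ from below along these points. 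The exterior integral $I_2$ is now estimated over $\mathbb{R}^n \setminus \Omega$ (which plays the role of $\Sigma$), using $u < \mu$ there so that $u(\bar x^k) + \eps_k \Phi_{r_k}(\bar x^k) - u(y) \ge A - \mu + (\text{positive})$; the decisive point is whether $A - \mu$ gives a strictly positive gap. If $A > \mu$, the argument of \eqref{ddd}--\eqref{dddff} goes through verbatim with $\Sigma = \mathbb{R}^n \setminus \Omega$ — here I need that the complement of $\Omega$ (the region below the graph of $\varphi$) satisfies the density condition \eqref{jixian}, which it does since a halfspace-like region below a graph contains, for every $x$, a fixed proportion of each annulus. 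Combined with $(-\Delta)^s_p u(\bar x^k) = f(u(\bar x^k)) \le 0$ (because $u(\bar x^k) \ge u(x^k) \to A > \mu$, so $f(u(\bar x^k)) \le 0$ for large $k$), this yields $\eps_k^{p-1} \ge c > 0$, contradicting that $\eps_k \to 0$.

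The remaining borderline case $A = \mu$ needs the strong maximum principle, Lemma \ref{Strongmax}. If $\sup_\Omega u = \mu$ but the sup is not attained, a minor variant of the above still produces a contradiction because $f(u(\bar x^k)) \le 0$ requires $u(\bar x^k) \ge \mu$ which fails; so in fact one should split more carefully: either $u < \mu$ everywhere (done), or there is a point $x_0 \in \Omega$ with $u(x_0) = \mu = A$, i.e. $A$ is attained. In the attained case, compare $u$ with the constant $v \equiv \mu$: both satisfy $(-\Delta)^s_p \cdot = 0$ trivially for the constant and $f(u) - f(\mu) = f(u) - 0$... this does not immediately fit \eqref{Strong}, so instead I would observe that at an interior maximum $x_0$ with $u(x_0) = \mu$, the defining integral $(-\Delta)^s_p u(x_0) = C_{n,s,p}\,\mathrm{P.V.}\!\int \frac{G(u(x_0)-u(y))}{|x_0-y|^{n+sp}}\,dy \ge 0$, with strict inequality unless $u \equiv \mu$, while $f(u(x_0)) = f(\mu) \le 0$; forcing $u \equiv \mu$, which contradicts $u > 0 = $ boundary-incompatibility or $u < \mu$ outside $\Omega$.

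I expect the main obstacle to be the careful bookkeeping in the attained-versus-not-attained dichotomy and, within the contradiction argument, verifying that the gap controlling $I_2$ is genuinely bounded below by a positive multiple of $A - \mu$ (so the argument degenerates precisely as $A \downarrow \mu$, which is exactly why the strong maximum principle must be invoked separately to close that gap). The non-commutativity of $(-\Delta)^s_p$ with constants is the reason one cannot simply quote Theorem \ref{unbonudedthma} as a black box and must instead reprise its proof with $\mu$ built into the threshold.
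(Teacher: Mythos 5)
Your proposal is driven by a false premise. You write that ``$(-\Delta)^s_p$ does not commute with adding constants,'' but it does: the integrand depends only on the difference $u(x)-u(y)$, so $G\bigl((u(x)+c)-(u(y)+c)\bigr)=G\bigl(u(x)-u(y)\bigr)$ and hence $(-\Delta)^s_p(u-\mu)=(-\Delta)^s_p u$. This is exactly what the paper's proof uses: set $w_1=u-\mu$ and $D=\{u>\mu\}$ (which lies in $\Omega$, with $\overline D$ disjoint from $\overline{\mathbb{R}^n\setminus\overline\Omega}$, since $u<\mu$ off $\Omega$); on $D$ one has $(-\Delta)^s_p w_1=(-\Delta)^s_p u=f(u)\le 0$ by condition (a), while $w_1\le 0$ on $\mathbb{R}^n\setminus D$, so Theorem \ref{unbonudedthma} with $c(x)\equiv 0$ gives $u\le\mu$ everywhere. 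The strict inequality $u<\mu$ in $\Omega$ then follows from the strong maximum principle, Lemma \ref{Strongmax}, applied to $u$ and the constant $\mu$ (note $f(\mu)=0$ by continuity and (a)); your ``attained interior maximum'' computation at the end is precisely the proof of that lemma. In short, the black-box route you rejected is the paper's proof, and the reason you gave for rejecting it is not valid.

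That said, the substitute you build — reprising the approximate-maximum-point argument with the threshold $\mu$ wired in, splitting into $A>\mu$ (quantitative contradiction with gap $A-\mu$) and $A=\mu$ (either the supremum is unattained and you are done, or it is attained and the sign of the singular integral forces $u\equiv\mu$, contradicting $u<\mu$ off $\Omega$) — is sound and would yield a correct proof; it simply re-derives Theorem \ref{unbonudedthma} and Lemma \ref{Strongmax} by hand. One caveat: your assertion that the subgraph $\mathbb{R}^n\setminus\overline\Omega$ automatically ``contains a fixed proportion of each annulus'' is not true for an arbitrary continuous $\varphi$ (e.g.\ $\varphi(x')=-|x'|^2$ makes the subgraph asymptotically negligible in large annuli), so condition \eqref{jixian} for this $\Sigma$ needs either a hypothesis on $\varphi$ or a separate verification; the paper's own invocation of Theorem \ref{unbonudedthma} rests on the same implicit assumption, so this is a shared issue rather than a defect specific to your argument.
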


\begin{proof}
Without loss of generality, we always assume that $\mu=1$ in conditions (a)--(c) in the rest of our paper.

Now we first show that $u(x)\leq 1,$ for all $x\in\Omega$.
Indeed, if $u>1$ somewhere, let $D$ be a component of the set where $u>1$. Notice that
$u<1$ in $\mathbb{R}^n\setminus\Omega$, let
$$w_1=u-1.$$
Since $f(1)=0$, $f(u(x))\leq0$, $x\in D$, we have
$$(-\Delta)^s_pw_1(x)\leq 0,\ \ x\in D.$$
It follows from Theorem \ref{unbonudedthma} that
$$w_1(x)\leq 0,\ \ x\in D.$$
Thus $u\leq 1$ in $D$, which contradicts the assumption $u>1$ somewhere.
So we derive that
$$u(x)\leq 1, \ \  x\in\Omega.$$
By \emph{strong maximum principle} (Lemma \ref{Strongmax}), we conclude that $u<1$ in $\Omega$.

This completes the proof of Theorem \ref{mainthm11aa}.
\end{proof}

\medskip

\section{The proof of Theorem \ref{mainthm11}}
In this section, we consider
\begin{equation}\label{fractionalbianjiewai3}\begin{cases}
(-\Delta)_p^s u(x)=f(u(x)),&x\in\Omega,\\
u(x)>0, &x\in\Omega.\\
\end{cases}\end{equation}

We prove
\begin{thm}\label{mainthm1133}
Let $u\in C_{loc}^{1,1}(\Omega)\cap \mathcal{L}_{sp}$ be a bounded solution of \eqref{fractionalbianjiewai3} with
$$0\leq u(x)<\mu, \;\; x\in \mathbb{R}^n\setminus \Omega.$$

Assume that $f$ is  continuous and satisfy condition (a),
and for some $0<t_0<t_1<\mu$,

(b) $f(t)\geq\delta_0t$ on $[0,t_0]$ for some $\delta_0>0$, and

(c) $f(t)$ is nonincreasing on $(t_1,\mu)$.

Then $u(x)\rightarrow\mu$ uniformly in $\Omega$ as $dist(x,\partial\Omega)\rightarrow\infty$.\\
\end{thm}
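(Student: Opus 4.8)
\medskip

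\noindent\textbf{Proof plan.} Recall $\mu=1$ (as normalised in the proof of Theorem~\ref{mainthm11aa}); write $G(t)=|t|^{p-2}t$ and $\Omega_\rho=\{x\in\mathbb{R}^n\mid \operatorname{dist}(x,\partial\Omega)>\rho\}$. By Theorem~\ref{mainthm11aa} we already know $0<u<1$ in $\Omega$ and $0\le u<1$ outside, so $f(u)$ is bounded. The plan is to run a \emph{sliding--on--large--balls} scheme in the spirit of Berestycki--Caffarelli--Nirenberg \cite{BCN1}, but carried out entirely through the pointwise maximum principle, so that no uniform $C^{1,1}$ estimate along sequences of solutions is ever invoked. \emph{Step 1 (a universal lower bound far from $\partial\Omega$).} Let $\lambda_1>0$ and $\phi_1$ be the first Dirichlet eigenvalue and the positive, radially nonincreasing first eigenfunction of $(-\Delta)_p^s$ on $B_1$, normalised so that $\phi_1(0)=\max\phi_1=1$ and extended by $0$; put $\phi_R(x)=\phi_1(x/R)$, so that $(-\Delta)_p^s(\varepsilon\phi_R)=\varepsilon^{p-1}\lambda_1R^{-sp}\phi_R^{\,p-1}$ in $B_R$. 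Fix $R_0=R_0(f,n,s,p)$ with $\lambda_1R_0^{-sp}<\delta_0$ (and, if $p>2$, also $(\delta_0R_0^{sp}/\lambda_1)^{1/(p-2)}>t_0$). For $x^0$ with $\operatorname{dist}(x^0,\partial\Omega)>R_0$ one has $u>0$ on $\overline{B_{R_0}(x^0)}\subset\Omega$, so there is a largest $\varepsilon^*\in(0,\infty)$ with $\varepsilon^*\phi_{R_0}(\cdot-x^0)\le u$ on $\mathbb{R}^n$, attained at a contact point $\bar x\in B_{R_0}(x^0)$. Since $u-\varepsilon^*\phi_{R_0}(\cdot-x^0)$ has a global minimum, equal to $0$, at $\bar x$, monotonicity of $G$ gives
\[
f(u(\bar x))=(-\Delta)_p^su(\bar x)\le(-\Delta)_p^s\bigl(\varepsilon^*\phi_{R_0}(\cdot-x^0)\bigr)(\bar x)=\lambda_1R_0^{-sp}\,u(\bar x)^{p-1};
\]
if $u(\bar x)\le t_0$, condition~(b) would force $\delta_0\le\lambda_1R_0^{-sp}u(\bar x)^{p-2}$, contradicting the choice of $R_0$. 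Hence $u(\bar x)>t_0$, so $\varepsilon^*\ge u(\bar x)>t_0$ and in particular $u(x^0)\ge\varepsilon^*\phi_{R_0}(0)>t_0$. Thus $u>t_0$ on $\Omega_{R_0}$, with $R_0$ depending only on $f,n,s,p$.

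\smallskip

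\noindent\emph{Step 2 (the ball solutions saturate).} For $R$ large let $\theta_R$ be a radial, radially nonincreasing solution of $(-\Delta)_p^s\theta_R=f(\theta_R)$ in $B_R$, $\theta_R=0$ off $B_R$, $0\le\theta_R\le1$ (take a symmetric--decreasing energy minimiser; continuity of $f$ suffices, $0$ and $1$ are ordered sub/supersolutions, and $\theta_R\not\equiv0$ for $R$ large by condition~(b)). I claim $\theta_R(0)\to1$. Otherwise $\theta_R(0)\le1-\varepsilon_0$ for some $\varepsilon_0>0$ along $R\to\infty$; applying Step~1 to $\theta_R$ gives $\theta_R>t_0$ on $B_{R-R_0}$, and since $\theta_R\le\theta_R(0)\le1-\varepsilon_0$ there, $f(\theta_R)\ge m_0:=\min_{[t_0,\,1-\varepsilon_0]}f>0$ on $B_{R-R_0}$. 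Let $\tau_\rho$ solve $(-\Delta)_p^s\tau_\rho=1$ in $B_\rho$, $\tau_\rho=0$ off $B_\rho$ (so $\tau_\rho(0)\to\infty$, by scaling). Comparing $\theta_R$ with $m_0^{1/(p-1)}\tau_{R-R_0}$ on $B_{R-R_0}$ gives $\theta_R(0)\ge m_0^{1/(p-1)}\tau_{R-R_0}(0)\to\infty$, which is absurd.

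\smallskip

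\noindent\emph{Step 3 (crossing $[t_0,t_1]$, and the final slide).} On $[t_0,t_1]$, where only condition~(a) is available, $f\ge m_1:=\min_{[t_0,t_1]}f>0$; hence, if $u\le t_1$ on some ball $B_\rho(y)$ with $\overline{B_\rho(y)}\subset\Omega_{R_0}$, then $(-\Delta)_p^su=f(u)\ge m_1$ there and comparison with $m_1^{1/(p-1)}\tau_\rho(\cdot-y)$ forces $u(y)\ge m_1^{1/(p-1)}\tau_\rho(0)\to\infty$ as $\rho\to\infty$. So $u$ cannot stay $\le t_1$ over large balls inside $\Omega_{R_0}$; this, together with the fact that by condition~(c) $(-\Delta)_p^su=f(u)$ is nonnegative and bounded wherever $t_1<u<1$, is then upgraded to: $u>t_1$ on $\Omega_{R_1}$ for a universal $R_1$. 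Now fix $\varepsilon>0$, pick $R$ with $\theta_R(0)>1-\varepsilon$, and take $x^0$ with $\operatorname{dist}(x^0,\partial\Omega)>R+R_1$, so that $u>t_1$ on $\overline{B_R(x^0)}$. Since $\theta_R(\cdot-x^0)\equiv0\le u$ off $B_R(x^0)$, the set $\{\theta_R(\cdot-x^0)>u\}$ lies in $B_R(x^0)$, where $u$ and $\theta_R(\cdot-x^0)$ take values in $(t_1,1)$. If $w:=\theta_R(\cdot-x^0)-u$ had a positive (necessarily interior) maximum at some $\bar x$, monotonicity of $G$ would give $(-\Delta)_p^s\theta_R(\cdot-x^0)(\bar x)\ge(-\Delta)_p^su(\bar x)$, i.e.\ $f(\theta_R(\bar x-x^0))\ge f(u(\bar x))$ with $t_1<u(\bar x)<\theta_R(\bar x-x^0)<1$; condition~(c) then forces $f(\theta_R(\bar x-x^0))=f(u(\bar x))$ and hence equality in that inequality, which by strict monotonicity of $G$ means $\theta_R(\cdot-x^0)-u\equiv w(\bar x)>0$ a.e.\ --- impossible, since $\theta_R(\cdot-x^0)-u=-u\le0$ off $B_R(x^0)$. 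Therefore $u\ge\theta_R(\cdot-x^0)$ on $\mathbb{R}^n$, so $u(x^0)\ge\theta_R(0)>1-\varepsilon$. As $\varepsilon>0$ is arbitrary and $u<1$, this is exactly $u(x)\to\mu$ uniformly in $\Omega$ as $\operatorname{dist}(x,\partial\Omega)\to\infty$.

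\smallskip

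\noindent\emph{Main obstacle.} I expect the hard part to be Step~2 (saturation of the ball solutions) together with the upgrade in Step~3 from ``$u$ cannot linger in $[t_0,t_1]$'' to ``$u>t_1$ far from $\partial\Omega$.'' Classically these rest on the ODE phase--plane for radial solutions and on linearising the comparison via the \emph{global Lipschitz} continuity of $f$, as in \cite{BCN1}; neither tool is available here, because $(-\Delta)_p^s$ is non--linear and $f$ is only assumed continuous. This is precisely why condition~(c) is indispensable for the range near $\mu$, why the range $[t_0,t_1]$ requires the separate torsion--barrier argument, and why one must work with \emph{fixed} barriers ($\varepsilon\phi_R$, $\theta_R$, $\tau_R$): evaluating $(-\Delta)_p^s$ only at isolated contact points via monotonicity of $G$ sidesteps the fact that no uniform $C^{1,1}$ estimate --- hence no convergence of $(-\Delta)_p^su^k$ along translates of $u$ --- is known for fractional $p$--solutions.
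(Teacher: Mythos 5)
Your Step~1 is a correct variant of the paper's Lemma~\ref{leme}: the paper slides a copy of $\psi_{\varepsilon_1,R}$ continuously along a segment and derives a contradiction at a first touching point, while you take the largest $\varepsilon^*$ with $\varepsilon^*\phi_{R_0}(\cdot-x^0)\le u$; both hinge on the same scaling $\lambda_1 R_0^{-sp}<\delta_0$ and monotonicity of $G$. But from Step~2 on you depart from the paper's argument, and the route you take contains a genuine gap.

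The paper never constructs ball \emph{solutions} $\theta_R$ of the nonlinear problem. Instead it fixes a smooth bump $\phi$ with $\phi(0)=1$, sets $\phi_R(x)=\phi((x-x_R)/R)$, and uses only the crude scaling bound $(-\Delta)_p^s\phi_R\le C R^{-sp}$. Since $u\ge0=\phi_R$ outside $B_R(x_R)$ and $u(x_R)<1=\phi_R(x_R)$, the minimum of $u-\phi_R$ over $B_R(x_R)$ is attained at some $\bar x_R$ with $u(\bar x_R)\le u(x_R)$, and it is a \emph{global} minimum. Monotonicity of $G$ at this global minimum gives $f(u(\bar x_R))=(-\Delta)_p^s u(\bar x_R)<(-\Delta)_p^s\phi_R(\bar x_R)\le C R^{-sp}\to0$. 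Combined with Lemma~\ref{leme} and (a)--(c), this forces $u(\bar x_R)\to1$ (once $C R^{-sp}$ drops below $\delta_0\varepsilon_1$ and $\min_{[t_0,t_1]}f$, the value $u(\bar x_R)$ is pushed into $(t_1,1)$, where monotonicity of $f$ turns $f(u(\bar x_R))\to0$ into $u(\bar x_R)\to1$); then $u(x_R)\ge u(\bar x_R)\to1$. The intermediate statement ``$u>t_1$ on $\Omega_{R_1}$'' is simply never needed.

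By contrast, your Step~3 hinges on exactly that statement, and your argument does not establish it. The torsion barrier only rules out $u\le t_1$ \emph{on a whole large ball}: it says nothing about the possibility that $u(y)\le t_1$ at isolated points $y$ arbitrarily far from $\partial\Omega$ while $u$ crosses $t_1$ nearby. The phrase ``this \ldots\ is then upgraded to $u>t_1$ on $\Omega_{R_1}$'' is precisely where the argument is missing, and you flag it yourself as the main obstacle --- correctly. (Note that the natural way to fill it in \emph{is} the paper's bump-function estimate applied around $y$, since that produces a point $\bar x$ with $u(\bar x)\le u(y)$ and $f(u(\bar x))<CR^{-sp}$; but then the whole $\theta_R$-machinery becomes unnecessary.) Without the upgrade, your final slide comparing $u$ with $\theta_R(\cdot-x^0)$ cannot invoke condition (c), since you cannot guarantee that $u(\bar x)\in(t_1,1)$ at the contact point.

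There is also a secondary concern in Step~2: the existence of a radial, radially nonincreasing, nontrivial energy minimiser $\theta_R$ is plausible but not routine here (symmetric decreasing rearrangement in $W^{s,p}$, nontriviality via (b), boundedness by a truncation argument all need to be checked), and to compare $\theta_R$ with $u$ or with the torsion function pointwise via Lemma~\ref{lemll} you would need $\theta_R\in C^{1,1}_{loc}$, which the available regularity theory for $(-\Delta)_p^s$ ($C^\alpha$ up to the boundary, cf.\ Lemma~\ref{holderestimate}) does not provide. The paper's fixed smooth bump $\phi_R$ avoids all of this, which is part of why its argument is so much shorter.
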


We first prove that the bounded solution of  \eqref{fractionalbianjiewai3} is bounded away from zero at points far away from the boundary.
\begin{lem}\label{leme}
There exist $\varepsilon_1,$ $R_0>0$ with $R_0$ depending only on $n$ and $\delta_0$
(recall condition (b)) such that
$$u(x)>\varepsilon_1 \ \ \ \text{if}\ \ \ dist(x,\partial\Omega)>R_0.$$
\end{lem}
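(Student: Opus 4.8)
The plan is to construct an explicit positive subsolution supported in a ball $B_{R_0}(x_0)$ of a \emph{fixed} radius and then slide its height upward until it reaches the level $t_0$; this yields a lower bound for $u$ at the centre $x_0$ of every such ball contained in $\Omega$, and since $x_0$ is arbitrary subject to $dist(x_0,\partial\Omega)>R_0$ (so that $\overline{B_{R_0}(x_0)}\subset\Omega$), the lemma follows. For the barrier, let $\phi$ be the first Dirichlet eigenfunction of $(-\Delta)_p^s$ on the unit ball $B_1$: it is radial, radially decreasing, and we normalise it by $\phi(0)=\max_{\overline{B_1}}\phi=1$ and extend it by $0$ outside $B_1$; denote by $\lambda_1=\lambda_1(B_1)>0$ the corresponding eigenvalue, a constant depending only on $n,s,p$. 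By the $(p-1)$-homogeneity of $(-\Delta)_p^s$ and the change of variables $x\mapsto x_0+R_0x$ in the Rayleigh quotient, the rescaled function $\phi_{R_0}(x):=\phi\big((x-x_0)/R_0\big)$ satisfies $(-\Delta)_p^s\phi_{R_0}=\lambda_1R_0^{-sp}\phi_{R_0}^{\,p-1}$ in $B_{R_0}(x_0)$. Fix once and for all $R_0>0$, depending only on $n,\delta_0$ (and on the fixed parameters $s,p$), so large that $\lambda_1R_0^{-sp}<\delta_0$.

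Following the normalisation fixed in the proof of Theorem \ref{mainthm11aa}, we may assume $\mu=1$, so that $t_0<1$. For $t\in(0,t_0]$ put $\underline u_t:=t\,\phi_{R_0}$, so that $0\le\underline u_t\le t\le t_0$. Since $p\ge2$ and $0<t\phi_{R_0}<1$ in $B_{R_0}(x_0)$, we have $(t\phi_{R_0})^{p-1}\le t\phi_{R_0}$, and hypothesis (b) gives
\[
(-\Delta)_p^s\underline u_t=\lambda_1R_0^{-sp}(t\phi_{R_0})^{p-1}\le\lambda_1R_0^{-sp}\,t\phi_{R_0}<\delta_0\,t\phi_{R_0}\le f(\underline u_t)\qquad\text{in }B_{R_0}(x_0).
\]
Thus $\underline u_t$ is a strict subsolution in $B_{R_0}(x_0)$; it vanishes outside $B_{R_0}(x_0)$, and since $u>0$ on $\overline{B_{R_0}(x_0)}\subset\Omega$ and $u\ge0$ on $\mathbb{R}^n\setminus\Omega$, we have $\underline u_t\le u$ on $\mathbb{R}^n\setminus B_{R_0}(x_0)$.

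Now I slide the height $t$ upward. For $t>0$ small, $\underline u_t\le t<\min_{\overline{B_{R_0}(x_0)}}u$ (the minimum being attained and positive, by continuity of $u$ and compactness of $\overline{B_{R_0}(x_0)}\subset\Omega$), hence $\underline u_t\le u$ on all of $\mathbb{R}^n$. Let $\bar t:=\sup\{t\in(0,t_0]:\underline u_\tau\le u\text{ in }\mathbb{R}^n\text{ for all }\tau\le t\}$; by continuity, $\underline u_{\bar t}\le u$ in $\mathbb{R}^n$. If $\underline u_{\bar t}(\tilde x)=u(\tilde x)$ at some $\tilde x\in B_{R_0}(x_0)$, then $\tilde x$ is a global maximum point of $\underline u_{\bar t}-u$, so $\underline u_{\bar t}(\tilde x)-\underline u_{\bar t}(y)\ge u(\tilde x)-u(y)$ for every $y$, and the monotonicity of $G(t)=|t|^{p-2}t$ yields $(-\Delta)_p^s\underline u_{\bar t}(\tilde x)\ge(-\Delta)_p^s u(\tilde x)=f(u(\tilde x))=f(\underline u_{\bar t}(\tilde x))$, contradicting the strict subsolution inequality. (With a non-strict subsolution one instead gets equality in $G$ for a.e.\ $y$, forcing $\underline u_{\bar t}\equiv u$, which is impossible since these functions differ on $\partial B_{R_0}(x_0)$; this is precisely the mechanism in the proof of Lemma \ref{Strongmax}.) Hence $\underline u_{\bar t}<u$ on the compact set $\overline{B_{R_0}(x_0)}$ --- strictly so on $\partial B_{R_0}(x_0)$ as well, because $u>0$ there --- so $\max_{\overline{B_{R_0}(x_0)}}(\underline u_{\bar t}-u)=-c<0$ for some $c>0$. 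Then for $\eta\in(0,c)$ we get $\underline u_{\bar t+\eta}-u\le(\underline u_{\bar t}-u)+\eta\phi_{R_0}\le -c+\eta<0$ on $\overline{B_{R_0}(x_0)}$ and $\le 0$ off it, so $\bar t+\eta$ lies in the defining set --- impossible unless $\bar t=t_0$. Therefore $t_0\phi_{R_0}\le u$ in $\mathbb{R}^n$, and evaluating at $x_0$ gives $u(x_0)\ge t_0\phi_{R_0}(x_0)=t_0$. As $x_0$ was an arbitrary point with $dist(x_0,\partial\Omega)>R_0$, the lemma holds with, e.g., $\varepsilon_1:=t_0/2$.

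The only delicate point --- and the one I expect to be the main obstacle --- is the pointwise evaluation of $(-\Delta)_p^s\underline u_{\bar t}$ at the interior contact point $\tilde x$: the principal-value integral must converge there, which requires $\phi$ to be smooth enough inside $B_1$, say $\phi\in C_{loc}^{1,1}(B_1)$. For $p=2$ the eigenfunction is $C^\infty$ in $B_1$ and there is nothing to verify; for $p>2$ one invokes the known interior $C^{1,\alpha}_{loc}$ regularity of fractional $p$-eigenfunctions (sufficient together with $\phi\in\mathcal{L}_{sp}$), or, to be completely safe, replaces $\phi$ by a slightly smaller $C_{loc}^{1,1}$ function supported in $\overline{B_1}$, positive in $B_1$, with $(-\Delta)_p^s(\cdot)\le\lambda(\cdot)^{p-1}$ in $B_1$ for some $\lambda<\delta_0$; the rest of the argument is unchanged. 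I also stress that sliding the height, rather than comparing $t_0\phi_{R_0}$ with $u$ in one step, is essential: only at the first contact do $\underline u_{\bar t}$ and $u$ agree at $\tilde x$, which is exactly what makes the absence of any monotonicity assumption on $f$ harmless.
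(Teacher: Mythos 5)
Your proof is correct, and it takes a genuinely different route from the paper's, although both hinge on the same comparison object: a rescaled first Dirichlet eigenfunction of $(-\Delta)_p^s$ on a ball of radius $R_0$ chosen so large that $\lambda_1 R_0^{-sp}<\delta_0$. The paper keeps the \emph{amplitude} $\varepsilon_1$ fixed (chosen below $\inf_{B_R(y^0)}u$ for one base point $y^0$) and slides the \emph{centre} of the ball along the segment $y_t=ty+(1-t)y^0$, reaching the contradiction at a first-touching point. You instead keep the \emph{centre} $x_0$ fixed and slide the \emph{amplitude} $t$ upward from $0$ to $t_0$, using a strict subsolution inequality at the contact point. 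This buys two things. First, every admissible centre $x_0$ is handled independently, so you never need the segment $\{y_t\}$ to stay at distance greater than $R$ from $\partial\Omega$ --- a tacit star-shapedness/connectedness assumption on $\{x:\mathrm{dist}(x,\partial\Omega)>R\}$ that the paper's sliding-the-centre scheme implicitly uses and which is not obvious for a general epigraph $\Omega$. Second, you obtain the uniform and explicit bound $u\ge t_0$ at the centre, rather than $u>\varepsilon_1$ with $\varepsilon_1=\min\{\inf_{B_R(y^0)}u,\,t_0\}$ depending on the chosen base point. Two further points in your favour: you correctly track the $(p-1)$-homogeneity of $(-\Delta)_p^s$ under dilation (so the scaling factor is $\varepsilon^{p-1}R^{-sp}$ paired with $\psi^{p-1}$; the paper's displayed computation omits these $(p-1)$-powers, a slip that does not harm the argument but is worth noticing), and you flag the interior $C^{1,1}_{loc}$ regularity needed for the eigenfunction so that the principal value at the contact point is classical --- the paper assumes this silently, and your fallback of replacing $\psi$ by an explicit smooth radially decreasing bump with the same subsolution property is a clean fix.
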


\medskip
\begin{proof}

Let $\lambda_1=\lambda_1(B_1(0))$ be the principle eigenvalue of $(-\Delta)_p^s $ in $B_1(0)$ with
Dirichlet boundary condition, assume that $\psi$ be the eigenfunction of
$(-\Delta)_p^s $ in $B_1(0)$, i.e.,

\begin{equation*}\begin{cases}
&(-\Delta)_p^s\psi(x)=\lambda_1\psi(x),\ \psi(x)>0,\ x\in B_1(0),\\
&\psi(x)=0,\ \ \ \ \ \ \ \ \ \ \ x\in \mathbb{R}^n\setminus{B_1(0)},
\end{cases}\end{equation*}
with $\max_{x\in B_1}\psi(x)=\psi(0)=1.$

Define
$$\psi_R(x)=\psi(\frac{x}{R}),\ \  \psi_{\varepsilon,R}(x)=\varepsilon\psi_R(x).$$
Then, it is obvious that $\psi_{\varepsilon,R}(0)=\varepsilon\psi(0)=\varepsilon.$ For $\varepsilon\in(0,t_0]$, there exists $R_0$
sufficiently large such that $\frac{\lambda_1}{R_0^{sp}}<\delta_0$. For simplicity, we use $R$ instead of $R_0$, then
\begin{equation*}\begin{split}
(-\Delta)_p^s\psi_{\varepsilon,R}(x)&=\frac{\varepsilon}{R^{sp}}[(-\Delta)_p^s\psi](\frac{x}{R})\\
&=\frac{\lambda_1\varepsilon}{R^{sp}}\psi_R(x)\\
&\leq \delta_0(\varepsilon\psi_R(x))\\
&\leq f(\psi_{\varepsilon,R}(x)),
\end{split}\end{equation*}
where the last inequality is due to the condition $f(t)\geq \delta_0t$ for some $t\in [0,t_0]$.

It follows from \eqref{fractionalbianjiewai3} that
\begin{equation}\label{fffaa}\begin{split}
(-\Delta)_p^su(x)-(-\Delta)_p^s \psi_{\varepsilon,R}(x)&=f(u(x))-\frac{\lambda_1}{R^{sp}} \psi_{\varepsilon,R}(x)\\
&\geq f(u(x))-f(\psi_{\varepsilon,R}(x)).
\end{split}\end{equation}
For $y^0\in\Omega$ with $dist(y^0,\partial\Omega)>R$, we choose $\varepsilon_0$ small enough such that
$$\varepsilon_0<\inf_{x\in B_R(y^0)}u(x).$$
Then, set $\varepsilon_1=\min\{\varepsilon_0,t_0\}$, we have
\begin{equation}\label{fffff}
u(x)>\varepsilon_1\psi_R(x-y^0)=\psi_{\varepsilon_1,R}(x-y^0),\ \ x\in \overline{B_R(y^0)}.
\end{equation}
For $t\in[0,1]$ and $y\in\Omega$ with $dist(y,\partial\Omega)>R$,  let $y_t=ty+(1-t)y^0$ and
$$w_t(x)=u(x)-\psi_{\varepsilon_1,R}(x-y_t),\ \ x\in \overline{B_R(y_t)}.$$
It follows from \eqref{fffff} that
$$w_0(x)>0,\ \ x\in \overline{B_R(y^0)},$$
and
\begin{equation}\label{partial}
w_t(x)>0,\ \ x\in \partial B_R(y_t).
\end{equation}
Now we will prove that
\begin{equation}\label{gggd}
w_t(x)>0,\ \ \text{for any}\ \ x\in B_R(y_t).
\end{equation}
Suppose on the contrary that there is a fist $t$ such that the graph of $\psi_{\varepsilon_1,R}(\cdot-y_t)$ touches that of $u$
at some point $\bar{x}_R\in \overline{B_R(y_t)}$. Then, from \eqref{partial}, we deduce that $\bar{x}_R\in B_R(y_t)$ and
\begin{equation}\label{abc}
w_t(\bar{x}_R)=0.
\end{equation}
On the other hand,
\begin{equation*}\begin{split}
&(-\Delta)_p^su(\bar{x}_R)-(-\Delta)_p^s \psi_{\varepsilon_1,R}(\bar{x}_R-y_t)\\
&=C_{n,s,p} P.V.\int_{\mathbb{R}^n}\frac{G(u(\bar{x}_R)-u(z))-G(\psi_{\varepsilon_1,R}(\bar{x}_R-y_t)-\psi_{\varepsilon_1,R}(z-y_t))}{|\bar{x}_R-z|^{n+sp}}dz\\
&=C_{n,s,p} P.V.\int_{B_R(y_t)}\frac{G(u(\bar{x}_R)-u(z))-G(\psi_{\varepsilon_1,R}(\bar{x}_R-y_t)-\psi_{\varepsilon_1,R}(z-y_t))}{|\bar{x}_R-z|^{n+sp}}dz\\
&\ \  \ +C_{n,s,p} \int_{\mathbb{R}^n\setminus {B_R(y_t)}}\frac{G(u(\bar{x}_R)-u(z))-G(\psi_{\varepsilon_1,R}(\bar{x}_R-y_t))}{|\bar{x}_R-z|^{n+sp}}dz\\
&=C_{n,s,p}\{I_1+I_2\}.
\end{split}\end{equation*}

We first estimate $I_1$, for $z\in B_R(x_R)$, we have
$$G(u(\bar{x}_R)-u(z))-G(\psi_{\varepsilon_1,R}(\bar{x}_R-y_t)-\psi_{\varepsilon_1,R}(z-y_t))\leq 0\ \ \text{but}\ \  \not \equiv0,$$
due to the monotonicity of $G$ and the fact that
$$[u(\bar{x}_R)-u(z)]-[\psi_{\varepsilon_1,R}(\bar{x}_R-y_t)-\psi_{\varepsilon_1,R}(z-y_t))]=w_t(\bar{x}_R)-w_t(z)\leq0\ \ \text{but}\ \  \not \equiv0.$$
One immediately has
$$I_1<0.$$
For $I_2$, $z\in \mathbb{R}^n\setminus{B_R(x_R)}$, we also can deduce
\begin{equation*}
G(u(\bar{x}_R)-u(z))-G(\psi_{\varepsilon_1,R}(\bar{x}_R-y_t))\leq 0\ \ \text{but}\ \  \not \equiv0.
\end{equation*}
Thus $I_2<0$, it follows that
\begin{equation}\label{abcd}
(-\Delta)_p^su(\bar{x}_R)-(-\Delta)_p^s \psi_{\varepsilon_1,R}(\bar{x}_R-y_t)<0.
\end{equation}

On the other hand, by \eqref{fffaa}, we obtain
\begin{equation}\label{abcde}\begin{split}
(-\Delta)_p^su(\bar{x}_R)-(-\Delta)_p^s \psi_{\varepsilon_1,R}(\bar{x}_R-y_t)&\geq f(u(\bar{x}_R))-f(\psi_{\varepsilon_1,R}(\bar{x}_R-y_t))\\
&=0.
\end{split}\end{equation}
It follows from \eqref{abc}, \eqref{abcd} and \eqref{abcde} that \eqref{gggd} must be valid. Let $t=1$, we obtain
\begin{equation*}
u(x)>\varepsilon_1\psi_R(x-y),\ \ \text{for any}\ \ x\in \overline{B_R(y)}.
\end{equation*}
In particular, $x=y$, it yields that
$$u(y)>\varepsilon_1,\ \ \text{for all}\ y\in\Omega\ \ \text{with}\ \ dist(y,\partial\Omega)>R.$$
This completes the proof of the lemma.
\end{proof}

\medskip
Now, we prove Theorem \ref{mainthm1133}.
Let
\begin{equation*}
\phi(x)=\begin{cases}
c_ne^{\frac{1}{|x|^2-1}}, &|x|<1,\\
0,&|x|\geq1.
\end{cases}
\end{equation*}
We choose $c_n$ such that $\phi(0)=1$. Set
$$\phi_R(x)=\phi(\frac{x-x_R}{R}),$$
where $x_R$ satisfies $dist(x_R,\partial\Omega)>2R$ and $B_R(x_R)\subset \Omega$.

It is obvious that $\phi_R(x_R)=\max_{x\in B_R(x_R)}\phi_R(x)=\phi(0)=1$, and $\phi_R$ satisfies
\begin{equation*}\begin{cases}
(-\Delta)_p^s \phi_R(x)\leq\frac{C}{R^{sp}}, &x\in B_R(x_R),\\
\phi_R(x)=0,& x\in \mathbb{R}^n\setminus B_R(x_R).
\end{cases}\end{equation*}

As a consequence, it follows from \eqref{fractionalbianjiewai3} that
\begin{equation}\label{important}
(-\Delta)_p^su(x)-(-\Delta)_p^s \phi_R(x)\geq f(u(x))-\frac{C}{R^{sp}}.
\end{equation}

Let $w_R(x)=u(x)-\phi_R(x)$, since $\phi_R(x_R)=1$ and notice that $u(x)<1$, $x\in B_R(x_R)$. We infer that
there exists $\bar{x}_R$ such that
$$w_R(\bar{x}_R)=\min_{x\in B_R(x_R)}w_R(x)<0,$$
which implies
$$u(\bar{x}_R)-\phi_R(\bar{x}_R)\leq u(x_R)-\phi_R(x_R).$$
It follows immediately that
\begin{equation}\label{biger}
u(\bar{x}_R)\leq u(x_R)-(\phi_R(x_R)-\phi_R(\bar{x}_R))\leq u(x_R).
\end{equation}
On the other hand,
\begin{equation*}\begin{split}
&(-\Delta)_p^su(\bar{x}_R)-(-\Delta)_p^s \phi_R(\bar{x}_R)\\
&=C_{n,s,p} P.V.\int_{\mathbb{R}^n}\frac{G(u(\bar{x}_R)-u(y))-G(\phi_R(\bar{x}_R)-\phi_R(y))}{|\bar{x}_R-y|^{n+sp}}dy\\
&=C_{n,s,p} P.V.\int_{B_R(x_R)}\frac{G(u(\bar{x}_R)-u(y))-G(\phi_R(\bar{x}_R)-\phi_R(y))}{|\bar{x}_R-y|^{n+sp}}dy\\
&\ \  \ +C_{n,s,p} \int_{\mathbb{R}^n\setminus {B_R(x_R)}}\frac{G(u(\bar{x}_R)-u(y))-G(\phi_R(\bar{x}_R))}{|\bar{x}_R-y|^{n+sp}}dy\\
&=C_{n,s,p}\{I_1+I_2\}.
\end{split}\end{equation*}

For $I_1$, $y\in B_R(x_R)$, we have
$$G(u(\bar{x}_R)-u(y))-G(\phi_R(\bar{x}_R)-\phi_R(y))\leq 0\ \ \text{but}\ \  \not \equiv0,$$
due to the monotonicity of $G$ and the fact that
$$[u(\bar{x}_R)-u(y)]-[\phi_R(\bar{x}_R)-\phi_R(y))]=w(\bar{x}_R)-w(y)\leq0\ \ \text{but}\ \  \not \equiv0.$$
One immediately has
$$I_1<0.$$
For $I_2$, $y\in \mathbb{R}^n\setminus{B_R(x_R)}$, we also can deduce
$$G(u(\bar{x}_R)-u(y))-G(\phi_R(\bar{x}_R))\leq 0\ \ \text{but}\ \  \not \equiv0.$$
Thus $I_2<0$, it follows that
$$(-\Delta)_p^su(\bar{x}_R)-(-\Delta)_p^s \phi_R(\bar{x}_R)<0,$$
combining this with \eqref{important} gives that
$$f(u(\bar{x}_R))<\frac{C}{R^{sp}}.$$
Hence, we obtain that
\begin{equation}\label{mm}
f(u(\bar{x}_R))\rightarrow0, \ \text{as}\ \  R\rightarrow\infty.
\end{equation}
Now we claim that
\begin{equation}\label{limit}
u(\bar{x}_R)\rightarrow1,\ \ \text{as} \ \ R\rightarrow\infty.
\end{equation}
In fact, by Lemma \ref{leme}, we have $u(\bar{x}_R)\geq \varepsilon_1$ ($\varepsilon_1>0$) as $\bar{x}_R$ away from boundary,
assume that $u(\bar{x}_R)\in [0,t_0]$, we get from condition (b),
\begin{equation}\label{mmm}
f(u(\bar{x}_R))\geq \delta_0u(\bar{x}_R)\geq\delta_0\varepsilon_1
\end{equation}
for some $\delta_0>0$.
Meanwhile, by condition (a), $f$ is a continuous function in $\mathbb{R}$ and $f(t)>0$ in $(0,1)$, we have
\begin{equation}\label{mmmmk}
\inf_{t\in [t_0,t_1]}f(t)=c_0>0.
\end{equation}

Therefore, we derive from \eqref{mm}, \eqref{mmm} and \eqref{mmmmk} that $u(\bar{x}_R)$ must fall in open interval
$(t_1,1)$, in  which  $f(t)$ is nonincreasing due to condition (c). Hence \eqref{limit} must be valid.
It follows from \eqref{biger} that
$$1>u(x_R)\geq u(\bar{x}_R)\rightarrow1, \ \ \text{as}\ \  R\rightarrow\infty.$$
Thus, we obtain that $u(x_R)\rightarrow1$ as $R\rightarrow\infty$.

This completes the proof of Theorem \ref{mainthm1133}.

\section{The proof of Theorem \ref{mainthm}}

In \cite{Li}, Li considered the following equation,
\begin{equation}\label{fractionalhold}\begin{cases}
(-\Delta)_p^s u(x)=g(x,u),\ u>0,& x\in\Gamma,\\
u(x)\leq0, &x\in \mathbb{R}^n\setminus \Gamma.
\end{cases}\end{equation}
\medskip

Based on Jin and Li \cite{JL} (the \emph{boundary H\"{o}lder regularity}) and  Brasco, Lindgren and Schikorra \cite{BLS} (the \emph{interior H\"{o}lder regularity}),
Li \cite{Li} obtained the following \emph{uniform H\"{o}lder norm estimate} in $\mathbb{R}^n$ for the fractional p-Laplacian.
\begin{lem}\label{holderestimate}(The uniform H\"{o}lder norm estimate)
Assume that $\Gamma$ is a domain (possibly unbounded) with the uniform two-sided ball condition,
and $u\in C_{loc}^{1,1}\cap \mathcal{L}_{sp}$
is a bounded solution of \eqref{fractionalhold}.
If $g(x,u)$ is bounded, then there exists
$\alpha\in (0,s)$ such that $u\in C^\alpha(\mathbb{R}^n)$. Moreover,
$$[u]_{C^\alpha(\mathbb{R}^n)}\leq C\big(1+\|u\|_{L^\infty(\Gamma)}+\|g\|^{\frac{1}{p-1}}_{L^\infty(\Gamma)}\big),$$
where $C$ is a constant depending on $\alpha$, $s$, $p$, $\Gamma$.
\end{lem}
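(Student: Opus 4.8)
The plan is to derive the global estimate by \emph{patching} the interior H\"older estimate of Brasco, Lindgren and Schikorra \cite{BLS} to the boundary H\"older estimate of Jin and Li \cite{JL}, using the uniform two-sided ball condition to make each local estimate uniform in the base point, and then gluing the local seminorms into a single global one.

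First I would reduce to a bounded right-hand side: since $u$ and $g(x,u)$ are bounded, set $h(x):=g(x,u(x))$, so that $(-\Delta)_p^s u=h$ in $\Gamma$ with $\|h\|_{L^\infty(\Gamma)}<\infty$, and abbreviate $K:=\|u\|_{L^\infty(\mathbb{R}^n)}$ (controlled by $\|u\|_{L^\infty(\Gamma)}$, and equal to it when $u\equiv 0$ outside $\Gamma$ as in the applications) and $M:=\|h\|_{L^\infty(\Gamma)}^{1/(p-1)}$. Since $u>0$ in $\Gamma$, $u\le 0$ on $\mathbb{R}^n\setminus\Gamma$ and $u$ is continuous, $u$ vanishes on $\partial\Gamma$. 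Let $r_0>0$ be the radius of the two-sided ball condition. I would then record the two ingredients in rescaled form. \emph{Interior}: if $B_{2\rho}(z)\subset\Gamma$ with $\rho\le r_0$, then applying \cite{BLS} to $u_\rho(y):=u(z+\rho y)$, which solves $(-\Delta)_p^s u_\rho=\rho^{sp}h(z+\rho\,\cdot)$ on $B_2$, yields
\[
[u]_{C^\alpha(B_\rho(z))}\le C\,\rho^{-\alpha}\Big(K+\rho^{\frac{sp}{p-1}}M\Big),
\]
where the nonlocal tail of $u_\rho$ is absorbed into $K$ through $\int_{|y|>1}|u_\rho(y)|^{p-1}|y|^{-n-sp}\,dy\le C\,K^{p-1}$. \emph{Boundary}: if $z\in\partial\Gamma$, using the exterior ball of radius $r_0$ at $z$, $u(z)=0$, $u\le 0$ outside $\Gamma$ and $u\in\mathcal{L}_{sp}$, the estimate of \cite{JL} gives $[u]_{C^\alpha(B_{r_0}(z))}\le C(r_0,s,p,\Gamma)\,(K+M)$ for some $\alpha\in(0,s)$.

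Next comes the covering argument. Fix $x\in\mathbb{R}^n$; the region far outside $\Gamma$ is controlled by the exterior data ($u\equiv 0$ there in the applications), so assume $x$ lies in $\overline{\Gamma}$ or within $r_0$ of it. If $\mathrm{dist}(x,\partial\Gamma)\ge r_0$, then $B_{r_0/2}(x)\subset\Gamma$ and the interior estimate with $\rho=r_0/4$ gives $[u]_{C^\alpha(B_{r_0/4}(x))}\le C(1+K+M)$; if $\mathrm{dist}(x,\partial\Gamma)<r_0$, choose a boundary point $z$ with $|x-z|<r_0$ and invoke the boundary estimate on $B_{r_0}(z)\ni x$. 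Either way one obtains, uniformly in $x$, $[u]_{C^\alpha(B_{\rho_0}(x))}\le C(1+K+M)$ with $\rho_0:=r_0/4$ and $C=C(\alpha,s,p,\Gamma)$. Finally, to pass from local to global: for $|x-y|\le\rho_0$ the local bound applies, while for $|x-y|>\rho_0$ one has $|u(x)-u(y)|\,|x-y|^{-\alpha}\le 2K\rho_0^{-\alpha}$; combining these gives
\[
[u]_{C^\alpha(\mathbb{R}^n)}\le C\big(1+\|u\|_{L^\infty(\Gamma)}+\|g\|_{L^\infty(\Gamma)}^{1/(p-1)}\big).
\]

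The main obstacle is bookkeeping rather than conceptual. One must control the nonlocal tails of the rescaled interior problem uniformly, purely in terms of $\|u\|_{L^\infty(\mathbb{R}^n)}$: this is exactly where the lower bound $\rho\ge c\,r_0>0$ furnished by the two-sided ball condition is essential, as it prevents the tail from degenerating as the base ball shrinks toward a boundary point. One must also track the homogeneity carefully so that $\|h\|_{L^\infty}$ enters only through $\|h\|_{L^\infty}^{1/(p-1)}$, which is dictated by the $(p-1)$-homogeneity of $(-\Delta)_p^s$ under $u\mapsto\lambda u$ combined with its $\rho^{sp}$-scaling under $u\mapsto u(\rho\,\cdot)$. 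A further subtlety, already dealt with inside \cite{JL}, is the influence, through the singular kernel, of far-away regions on a near-boundary ball; the $\mathcal{L}_{sp}$ membership of $u$ together with its boundedness keeps this contribution bounded.
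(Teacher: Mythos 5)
Your patching argument --- the rescaled interior estimate of \cite{BLS}, the boundary estimate of \cite{JL} at points of $\partial\Gamma$ made uniform by the two-sided ball condition, and a standard covering/chaining step to globalize, with the $\|g\|^{1/(p-1)}$ dependence coming from the $(p-1)$-homogeneity --- is exactly the route the paper relies on: it does not prove the lemma itself but quotes it from the preprint \cite{Li}, which is described as obtained precisely by combining \cite{JL} and \cite{BLS} in this way. Your sketch is sound, and your parenthetical caveat that the global seminorm on all of $\mathbb{R}^n$ really requires the exterior data to be regular (e.g.\ $u\equiv 0$ outside $\Gamma$, as in every application in the paper) is a fair observation about the statement rather than a gap in your argument.
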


Based on this uniform estimate, we apply the sliding method to derive the monotonicity and uniqueness of solutions for
\begin{equation}\label{fractionalww}\begin{cases}
(-\Delta)_p^s u(x)=f(u(x)),&x\in\Omega,\\
u(x)>0,&x\in\Omega,\\
u(x)=0,&x\in \mathbb{R}^n\setminus \Omega,
\end{cases}\end{equation}
where $\Omega$ satisfies the uniform two-sided ball condition.

\medskip

\begin{thm}\label{mainthmaaa}
Let $u\in C_{loc}^{1,1}(\Omega) \cap \mathcal{L}_{sp}$ be a bounded solution of \eqref{fractionalww}.
Assume that $f$ is a continuous function and satisfies conditions (a)-(c) for some $0<t_0<t_1<\mu$.

Then $u$ is strictly monotone increasing in $x_n$.

Furthermore the bounded solution of \eqref{fractionalww} is unique.
\end{thm}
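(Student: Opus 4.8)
**The plan is to carry out the sliding method in the $x_n$-direction, using the preparatory theorems and the uniform Hölder estimate to control the sliding process.**

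Set $u_\tau(x) = u(x', x_n + \tau)$ and $w_\tau(x) = u_\tau(x) - u(x)$ for $\tau > 0$. The goal is to show $w_\tau \geq 0$ in $\mathbb{R}^n$ for every $\tau > 0$, which immediately gives monotonicity; strict monotonicity then follows from the strong maximum principle (Lemma \ref{Strongmax}) applied to $u$ and $u_\tau$, since they satisfy the difference equation $(-\Delta)_p^s u_\tau - (-\Delta)_p^s u = f(u_\tau) - f(u)$. The first step is to observe that $w_\tau \geq 0$ holds for $\tau$ sufficiently large: by Theorem \ref{mainthm1133}, $u(x) \to \mu$ uniformly as $\mathrm{dist}(x,\partial\Omega) \to \infty$, so there is $M$ with $u > t_1$ wherever $x_n > M$; meanwhile for $x$ with $x_n + \tau$ large and $x \notin \Omega$ (so $u(x) = 0$) or $x \in \Omega$ with small $x_n$, a sufficiently large upward shift lands $u_\tau$ in the region $(t_1,\mu)$ where $f$ is nonincreasing, and there $w_\tau$ cannot have a negative interior minimum by the usual monotonicity-of-$G$ argument combined with condition (c). One also needs that $\Omega$ being above a graph means a large enough $\tau$ forces $\{x_n > \varphi(x')\} + \tau e_n \supset \Omega$ up to the exterior set, handling the Dirichlet data. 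This gives $\tau_0 := \inf\{\tau > 0 : w_s \geq 0 \text{ for all } s \geq \tau\} < \infty$.

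The second step is to slide down: suppose for contradiction $\tau_0 > 0$. By continuity $w_{\tau_0} \geq 0$, and by the strong maximum principle $w_{\tau_0} > 0$ in $\Omega$ (it is not identically zero since, e.g., $u > 0 = u$ on part of the complement under the shift, or using the asymptotic behavior). Now one wants to show $w_{\tau_0 - \eta} \geq 0$ for small $\eta > 0$, contradicting minimality. Split into a region near the boundary and a region far from it. Far from $\partial\Omega$ — where $u$ and $u_{\tau_0}$ are both close to $\mu$ and hence in $(t_1,\mu)$ — condition (c) (monotonicity of $f$) makes the difference operator behave like a maximum-principle operator, and the argument used in the proof of Theorem \ref{mainthm1133} (estimating $I_1, I_2$ via strict monotonicity of $G$) shows no negative minimum can occur; a quantitative lower bound $w_{\tau_0} \geq c > 0$ on this far region, available by compactness of the "near-boundary" complement, survives a small perturbation $\eta$. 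In the compact near-boundary region, the classical sliding argument applies: $w_{\tau_0}$ is bounded below by a positive constant on any fixed compact set, and continuity in $\tau$ closes the gap.

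The main obstacle is the lack of $C^{1,1}$ regularity needed to pass $(-\Delta)_p^s u^k - (-\Delta)_p^s u_\tau^k$ to a limit along a maximizing sequence $\{x^k\}$ when $\sup(-w_\tau) > 0$ is attained only in the limit. This is exactly the difficulty flagged in the introduction, and the remedy is the paper's key new idea: rather than extracting a limiting equation, one estimates the singular integrals defining $(-\Delta)_p^s u(x^k) - (-\Delta)_p^s u_{\tau_0-\eta}(x^k)$ \emph{directly along the approximate maximum points} $x^k$, using an auxiliary cutoff $\Phi_{r_k}$ exactly as in the proof of Theorem \ref{unbonudedthma}, to force a contradiction with the equation $f(u_{\tau_0-\eta}(x^k)) - f(u(x^k)) \to 0$. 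Here the uniform Hölder estimate of Lemma \ref{holderestimate} is what guarantees $u$ does not oscillate wildly, so that the translated functions $w_\tau^k$ converge locally uniformly and the integral estimates are uniform in $k$. Finally, uniqueness: if $u, v$ are two bounded solutions, apply the monotonicity-based sliding comparison to $u$ against translates of $v$ (and vice versa) to get $u \leq v$ and $v \leq u$, using that both tend to $\mu$ at infinity and vanish on $\mathbb{R}^n \setminus \Omega$; the strong maximum principle upgrades $u \equiv v$.
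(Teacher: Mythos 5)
Your overall strategy (slide in the $x_n$-direction, handle the non-attained extremum by estimating the singular integrals along approximate maximum points with a cutoff, lean on the uniform H\"older estimate of Lemma \ref{holderestimate}) is the paper's strategy, and your Step 1 is essentially the paper's Step 1. But your Step 2 has a genuine gap. You reduce the passage from $\tau_0$ to $\tau_0-\eta$ to (i) a claimed quantitative bound $w_{\tau_0}\geq c>0$ on the far region and (ii) ``the classical sliding argument'' on a ``compact near-boundary region''. Both fail here. Far from $\partial\Omega$ both $u$ and $u_{\tau_0}$ converge uniformly to $\mu$ by Theorem \ref{mainthm1133}, so $u_{\tau_0}-u\to 0$ there and no positive lower bound exists. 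More importantly, the near-boundary strip $\Omega_M=\{\varphi(x')<x_n<\varphi(x')+M\}$ is unbounded in the $x'$ directions, hence not compact: $w_{\tau_0}>0$ pointwise on it does not give $\inf_{\Omega_M}w_{\tau_0}>0$, and continuity in $\tau$ alone cannot close the gap. This is exactly the case the paper must treat separately (its Case 1): if $\inf_{\Omega_M}(u_{\tau_0}-u)=0$ along a sequence $x^k$ escaping to infinity within the strip, the approximate-maximum estimate combined with Lemma \ref{appendix2} yields, after translating by $x^k$ and applying Arzel\`a--Ascoli via the uniform H\"older bound, that the limit functions satisfy $u^\infty\equiv u^\infty_{\tau_0}$ outside a fixed ball; one then iterates this identity to get $u^\infty(x^o+m\tau_0e_n)=u^\infty(x^o)=0$ for all $m$ at a point $x^o$ where $u^\infty$ vanishes, contradicting $u^\infty\to\mu$ from Theorem \ref{mainthm1133}. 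Your proposal invokes the translated limits but never extracts this identity nor the iteration that produces the contradiction, so the decisive step of the sliding argument is missing.

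The uniqueness sketch has a parallel omission. Comparing $v$ with $u_\tau$ and sliding down, the obstruction at $\tau_0>0$ is again the limiting identity $v^\infty\equiv u^\infty_{\tau_0}$ off a ball, and to contradict it one needs a positive lower bound for $u$ up to the boundary. The paper obtains $u(x)\geq\frac{\varepsilon_0}{2}\delta^s(x)$ near $\partial\Omega$ by constructing the subsolution $u_D+\varepsilon(1-|x|^2)_+^s$ in an interior tangent ball (Lemmas \ref{lemlizhang} and \ref{lemll}); this forces $u^\infty_{\tau_0}>0$ at boundary points of the limiting domain while $v^\infty=0$ there. Without some such boundary estimate, the observation that both solutions tend to $\mu$ at infinity and vanish outside $\Omega$ is not enough to conclude $\tau_0=0$.
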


\medskip

\begin{proof}
We will carry out the proof of Theorem \ref{mainthmaaa} in three steps.

For $\tau\geq0$, denote
$$u_\tau(x):=u(x+\tau e_n),\ \ w_\tau(x):=u(x)-u_\tau(x),$$
where $e_n=(0,0,\cdots,1)$.

In step 1, we will show that for $\tau$ sufficiently large, we
have
\begin{equation}\label{qidiansliding}
w_\tau(x)\leq0,\ \ x\in\mathbb{R}^n.
\end{equation}
This provides the starting point for the sliding method. Then in step 2, we decrease $\tau$ continuously as long
as \eqref{qidiansliding} holds to its limiting position. Define
$$\tau_0:=\inf\{\tau>0\mid \ w_\tau(x)\leq0,\  \forall x\in\mathbb{R}^n\}.$$
We will show that $\tau_0=0$. Then we deduce that
the solution $u$ must be strictly monotone increasing in $x_n$.
In step 3, we will prove the uniqueness by constructing
the sub-solution.
\medskip

We now show the details in the three steps.

\medskip
\emph{Step 1.} We show that for $\tau$ sufficiently large, we have
\begin{equation}\label{tao}
w_\tau(x)\leq0,\ \  x\in\mathbb{R}^n.
\end{equation}

For $h>0$, define
$$\Omega_h:=\{x\in\mathbb{R}^n\mid \varphi(x')<x_n<\varphi(x')+h\}.$$
By Theorem \ref{mainthm1133}, there exists an $M_0>0$ large enough such that for $\tau\geq M_0$,
$$u_\tau (x) \in(t_1,1),\ \forall x\in \Omega_h.$$
Suppose \eqref{tao} is violated, there exists a constant $A>0$ such that
$$\sup_{x\in \mathbb{R}^n}w_\tau(x)= A,$$
hence there exists a sequence $\{x^k\}$ in $\mathbb{R}^n$ such that
$$w_\tau(x^k)\rightarrow A,\ \text{as}\ k\rightarrow\infty.$$
Since $u=0$ in $\mathbb{R}^n\setminus{\overline{\Omega}}$, it yields that
$$w_\tau(x)\leq 0,\ \ \ \forall x\in\mathbb{R}^n\setminus{\overline{\Omega}}.$$
Moreover, thanks to Theorem \ref{mainthm1133}, there exists an $M>M_0>0$ such that the sequence $x^k\in\Omega_M$.
Similar to the argument as Theorem \ref{unbonudedthma}, there exists $\bar{x}^k\in B_1(x^k)$ such that
\begin{equation}\label{lastin}
w_\tau(\bar{x}^k)+\varepsilon_k\Phi_{1}(\bar{x}^k)=\max_{x\in B_1(x^k)}[w_\tau(x)+\varepsilon_k\Phi_{1}(x)]
=\max_{x\in\mathbb{R}^n}[w_\tau(x)+\varepsilon_k\Phi_{1}(x)]>A,
\end{equation}
where the definition of $\Phi_{r_k}$ is the same as \eqref{fai} with $r_k=1$.

It follows from Lemma \ref{appendix2} in Appendix
and the monotonicity of $f(t)$ for $t\in(t_1,1)$ that
\begin{equation}\label{qqffhhh}\begin{split}
&(-\Delta)_p^s (u+\varepsilon_k\Phi_{1})(\bar{x}^k)-(-\Delta)_p^s u_\tau(\bar{x}^k)\\
&=(-\Delta)_p^s (u+\varepsilon_k\Phi_{1})(\bar{x}^k)-(-\Delta)_p^s u(\bar{x}^k)+(-\Delta)_p^s u(\bar{x}^k)-(-\Delta)_p^s u_\tau(\bar{x}^k)\\
&\leq f(u(\bar{x}^k))-f(u_\tau(\bar{x}^k))+\varepsilon_k C_\delta+C\delta^{p(1-s)}\\
&\leq \varepsilon_k C_\delta+C\delta^{p(1-s)}.
\end{split}\end{equation}

On the other hand, we calculate
\begin{equation}\label{qqiiii}\begin{split}
&(-\Delta)_p^s (u+\varepsilon_k\Phi_{1})(\bar{x}^k)-(-\Delta)_p^s u_\tau(\bar{x}^k)\\
&=C_{n,s,p} P.V.\int_{\mathbb{R}^n}\frac{G(u(\bar{x}^k)+\varepsilon_k\Phi_{1}(\bar{x}^k)-u(y)-\varepsilon_k\Phi_{1}(y))-
G(u_\tau(\bar{x}^k)-u_\tau(y))}{|\bar{x}^k-y|^{n+sp}}dy\\
&=C_{n,s,p} P.V.\int_{B_{2}(x^k)}\frac{G(u(\bar{x}^k)+\varepsilon_k\Phi_{1}(\bar{x}^k)-u(y)-\varepsilon_k\Phi_{1}(y))-
G(u_\tau(\bar{x}^k)-u_\tau(y))}{|\bar{x}^k-y|^{n+sp}}dy\\
&\ \ \ +C_{n,s,p} \int_{\mathbb{R}^n\setminus{B_{2}(x^k)}}\frac{G(u(\bar{x}^k)+\varepsilon_k\Phi_{1}(\bar{x}^k)-u(y)-\varepsilon_k\Phi_{1}(y))-
G(u_\tau(\bar{x}^k)-u_\tau(y))}{|\bar{x}^k-y|^{n+sp}}dy\\
&=I_1+I_2.
\end{split}\end{equation}
For $I_1$, we first notice that
$$G(u(\bar{x}^k)+\varepsilon_k\Phi_{1}(\bar{x}^k)-u(y)-\varepsilon_k\Phi_{1}(y))-
G(u_\tau(\bar{x}^k)-u_\tau(y))\geq 0$$
due to the strict monotonicity of $G$ and the fact
\begin{equation*}\begin{split}
&u(\bar{x}^k)+\varepsilon_k\Phi_{1}(\bar{x}^k)-u(y)-\varepsilon_k\Phi_{1}(y)-
(u_\tau(\bar{x}^k)-u_\tau(y))\\
&\ \ =w_\tau(\bar{x}^k)+\varepsilon_k\Phi_{1}(\bar{x}^k)-(w_\tau(y)+\varepsilon_k\Phi_{1}(y))\geq 0,
\end{split}\end{equation*}
for any $y\in B_{2}(x^k)$. Thus
\begin{equation}\label{qccc}
I_1\geq0.
\end{equation}
Now we estimate $I_2$, one can infer from Lemma \ref{appendix} in Appendix and \eqref{lastin} that
\begin{equation*}\label{qddd}\begin{split}
I_2&= C_{n,s,p}\int_{\mathbb{R}^n\setminus{B_{2}(x^k)}}\frac{G(u(\bar{x}^k)+\varepsilon_k\Phi_{1}(\bar{x}^k)-u(y)-\varepsilon_k\Phi_{1}(y))-
G(u_\tau(\bar{x}^k)-u_\tau(y))}{|\bar{x}^k-y|^{n+sp}}dy\\
&\geq 2^{2-p}C_{n,s,p}\int_{\mathbb{R}^n\setminus{B_{2}(x^k)}}\frac{G[w_\tau(\bar{x}^k)+\varepsilon_k
\Phi_{1}(\bar{x}^k)-u(y)]}{|\bar{x}^k-y|^{n+sp}}dy\\
&\geq 2^{2-p}C_{n,s,p}\int_{\big(\mathbb{R}^n\setminus{B_{2}(x^k)}\big)\cap{\big(\mathbb{R}^n\setminus{\overline{\Omega}}\big)}
}\frac{G[w_\tau(\bar{x}^k)+\varepsilon_k\Phi_{1}(\bar{x}^k)-w_\tau(y)]}
{|\bar{x}^k-y|^{n+sp}}dy\\
&\geq A^{p-1}2^{2-p}C_{n,s,p}\int_{\big(\mathbb{R}^n\setminus{B_{2}(x^k)}\big)\cap{\big(\mathbb{R}^n\setminus{\overline{\Omega}}\big)}}\frac{1}{|\bar{x}^k-y|^{n+sp}}dy\\
&\geq c_1\int_{\big(\mathbb{R}^n\setminus{B_{2}(x^k)}\big)\cap{\big(\mathbb{R}^n\setminus{\overline{\Omega}}\big)}}\frac{1}{|x^k-y|^{n+sp}}dy,
\end{split}\end{equation*}
where in the last inequality we have used the fact
\begin{equation*}
|\bar{x}^k-y|\leq |\bar{x}^k-x^k|+|x^k-y|
\leq \frac{3}{2}|x^k-y|.
\end{equation*}
As a consequence, we get
\begin{equation}\label{qdddff}
I_2\geq c_1\int_{\big(\mathbb{R}^n\setminus{B_{2}(x^k)}\big)\cap{\big(\mathbb{R}^n\setminus{\overline{\Omega}}\big)}}\frac{1}{|x^k-y|^{n+sp}}dy\geq c_M>0.
\end{equation}
where $x^k\in\Omega_M$.

It follows from \eqref{qqffhhh}, \eqref{qqiiii}, \eqref{qccc}, \eqref{qdddff} that
\begin{equation*}
\varepsilon_k C_\delta+C\delta^{p(1-s)}\geq c_M.
\end{equation*}
Choosing $\delta=\big(\frac{c_M}{2C}\big)^{\frac{1}{p(1-s)}}$, we arrive at
\begin{equation}\label{lailai}
\varepsilon_k C_\delta\geq \frac{c_M}{2}.
\end{equation}
Since the left hand side of \eqref{lailai} must go to zero as $\varepsilon_k\rightarrow0$ ($k\rightarrow\infty$), which contradicts the right hand side of \eqref{lailai}.
Therefore, \eqref{tao} must be true for sufficiently large $\tau$.
\medskip

\emph{Step 2.}
Now we prove that for $\forall \tau>0$,
\begin{equation}\label{udandiao}
w_\tau(x)<0,\ \ \forall x\in\mathbb{R}^n.
\end{equation}

\emph{Step 1} provides a starting point from which we can decrease $\tau$ continuously from $\tau\geq N$ as long
as \eqref{tao} holds, define
$$\tau_0:=\inf\{\tau>0\mid \ w_\tau(x)\leq0,\  \forall x\in\mathbb{R}^n\}.$$
We show that
\begin{equation}\label{taoweiling}
\tau_0=0.
\end{equation}
Suppose on the contrary $\tau_0>0$. By continuity, we see that $w_{\tau_0}(x)\leq0$.
One can infer from \emph{strong maximum principle} (Lemma \ref{Strongmax}) that
$$w_{\tau_0}(x)<0,\  \forall x\in\mathbb{R}^n.$$

Then the following two cases may occur.
\medskip

\emph{Case 1.} Suppose that
$$\sup_{x\in \Omega_M}w_{\tau_0}(x)=0,$$
where $M$ is the same as \emph{Step 1}.

Then there exists a sequence $\{x^k\}$ in $\Omega_M$ such that
\begin{equation}\label{wtaoling}
w_{\tau_0}(x^k)\rightarrow0,\ \text{as}\ k\rightarrow\infty.
\end{equation}
Similar to the argument as Theorem \ref{unbonudedthma}, by \eqref{wtaoling},
then there exists $\bar{x}^k\in B_1(x^k)$ such that
\begin{equation}\label{jixianweizhi}
w_{\tau_0}(\bar{x}^k)+\varepsilon_k\Phi_1(\bar{x}^k)=\max_{x\in B_1(x^k)}[w_{\tau_0}(x)+\varepsilon_k\Phi_1(x)]
=\max_{x\in\mathbb{R}^n}[w_{\tau_0}(x)+\varepsilon_k\Phi_1(x)]>0.
\end{equation}
where the definition of $\Phi_1$ is the same as \eqref{fai} with $r_k=1$.

By \eqref{jixianweizhi}, we can deduce that
$$0>w_{\tau_0}(\bar{x}^k)\geq w_{\tau_0}(x^k)+\varepsilon_k\Phi_1(x^k)-\varepsilon_k\Phi_1(\bar{x}^k)\geq w_{\tau_0}(x^k)\rightarrow0,
\ \text{as}\ k\rightarrow\infty,$$
which implies that
\begin{equation}\label{wwwjixian}
w_{\tau_0}(\bar{x}^k)\rightarrow0,\ \text{as}\ k\rightarrow\infty.
\end{equation}
It follows from Lemma \ref{appendix2} in Appendix that

\begin{equation}\label{zuihou55}\begin{split}
&(-\Delta)_p^s (u+\varepsilon_k\Phi_1)(\bar{x}^k)-(-\Delta)_p^s u_{\tau_0}(\bar{x}^k)\\
&=(-\Delta)_p^s (u+\varepsilon_k\Phi_{1})(\bar{x}^k)-(-\Delta)_p^s u(\bar{x}^k)+(-\Delta)_p^s u(\bar{x}^k)-(-\Delta)_p^s u_{\tau_0}(\bar{x}^k)\\
&\leq f(u(\bar{x}^k))-f(u_{\tau_0}(\bar{x}^k))+\varepsilon_k C_\delta+C\delta^{p(1-s)}.\\
\end{split}\end{equation}

On the other hand, we calculate
\begin{equation}\label{qqiiii55}\begin{split}
&(-\Delta)_p^s (u+\varepsilon_k\Phi_{1})(\bar{x}^k)-(-\Delta)_p^s u_{\tau_0}(\bar{x}^k)\\
&=C_{n,s,p} P.V.\int_{\mathbb{R}^n}\frac{G(u(\bar{x}^k)+\varepsilon_k\Phi_{1}(\bar{x}^k)-u(y)-\varepsilon_k\Phi_{1}(y))-
G(u_{\tau_0}(\bar{x}^k)-u_{\tau_0}(y))}{|\bar{x}^k-y|^{n+sp}}dy\\
&=C_{n,s,p} P.V.\int_{B_{2}(x^k)}\frac{G(u(\bar{x}^k)+\varepsilon_k\Phi_{1}(\bar{x}^k)-u(y)-\varepsilon_k\Phi_{1}(y))-
G(u_{\tau_0}(\bar{x}^k)-u_{\tau_0}(y))}{|\bar{x}^k-y|^{n+sp}}dy\\
&\ \ \ +C_{n,s,p} \int_{\mathbb{R}^n\setminus{B_{2}(x^k)}}\frac{G(u(\bar{x}^k)+\varepsilon_k\Phi_{1}(\bar{x}^k)-u(y)-\varepsilon_k\Phi_{1}(y))-
G(u_{\tau_0}(\bar{x}^k)-u_{\tau_0}(y))}{|\bar{x}^k-y|^{n+sp}}dy\\
&=I_1+I_2.
\end{split}\end{equation}
For $I_1$, we first notice that
$$G(u(\bar{x}^k)+\varepsilon_k\Phi_{1}(\bar{x}^k)-u(y)-\varepsilon_k\Phi_{1}(y))-
G(u_{\tau_0}(\bar{x}^k)-u_{\tau_0}(y))\geq 0$$
due to the strict monotonicity of $G$ and the fact
\begin{equation*}\begin{split}
&u(\bar{x}^k)+\varepsilon_k\Phi_{1}(\bar{x}^k)-u(y)-\varepsilon_k\Phi_{1}(y)-
(u_{\tau_0}(\bar{x}^k)-u_{\tau_0}(y))\\
&\ \ =w_{\tau_0}(\bar{x}^k)+\varepsilon_k\Phi_{1}(\bar{x}^k)-(w_{\tau_0}(y)+\varepsilon_k\Phi_{1}(y))\geq 0,
\end{split}\end{equation*}
for any $y\in B_{2}(x^k)$. Thus
\begin{equation}\label{qccc55}
I_1\geq0.
\end{equation}
Now we estimate $I_2$, one can infer from Lemma \ref{appendix} in Appendix and \eqref{jixianweizhi} that
\begin{equation}\label{qddd55}\begin{split}
I_2&= C_{n,s,p}\int_{\mathbb{R}^n\setminus{B_{2}(x^k)}}\frac{G(u(\bar{x}^k)+\varepsilon_k\Phi_{1}(\bar{x}^k)-u(y)-\varepsilon_k\Phi_{1}(y))-
G(u_{\tau_0}(\bar{x}^k)-u_{\tau_0}(y))}{|\bar{x}^k-y|^{n+sp}}dy\\
&\geq 2^{2-p}C_{n,s,p}\int_{\mathbb{R}^n\setminus{B_{2}(x^k)}}\frac{G[w_{\tau_0}(\bar{x}^k)+\varepsilon_k
\Phi_{1}(\bar{x}^k)-w_{\tau_0}(y)]}{|\bar{x}^k-y|^{n+sp}}dy\\
&\geq c'\int_{\mathbb{R}^n\setminus{B_{2}(x^k)}}\frac{G[-w_{\tau_0}(y)]}
{|x^k-y|^{n+sp}}dy,\\
\end{split}\end{equation}
where for the last inequality we have used the fact
\begin{equation*}
|\bar{x}^k-y|\leq |\bar{x}^k-x^k|+|x^k-y|
\leq \frac{3}{2}|x^k-y|.
\end{equation*}

It follows from \eqref{qqiiii55},\eqref{qccc55} and \eqref{qddd55} that
\begin{equation}\label{vvgg55}\begin{split}
&(-\Delta)_p^s (u+\varepsilon_k\Phi_{1})(\bar{x}^k)-(-\Delta)_p^s u_{\tau_0}(\bar{x}^k)\\
&\geq c'\int_{\mathbb{R}^n\setminus{B_{2}(x^k)}}\frac{G[-w_{\tau_0}(y)]}
{|x^k-y|^{n+sp}}dy\\
&= c'\int_{\mathbb{R}^n\setminus{B_{2}(0)}}\frac{G[-w_{\tau_0}(y+x^k)]}
{|y|^{n+sp}}dy,
\end{split}\end{equation}
where $x^k\in\Omega_M$.

As a consequence, by \eqref{zuihou55}, we have
\begin{equation}\label{shouliandao}
f(u(\bar{x}^k))-f(u_{\tau_0}(\bar{x}^k))+\varepsilon_k C_\delta+C\delta^{p(1-s)}
\geq c'\int_{\mathbb{R}^n\setminus{B_{2}(0)}}\frac{G[-w_{\tau_0}(y+x^k)]}
{|y|^{n+sp}}dy.
\end{equation}

Let
$$w_{\tau_0}^k(y)=u(y+x^k)-u_{\tau_0}(y+x^k).$$

From Lemma \ref{holderestimate}, we know that $u$ is uniformly H\"{o}lder continuous in $\mathbb{R}^n$,
hence, $w_{\tau_0}^k$ is equi-continuous in $\mathbb{R}^n$. By the \emph{Arzel\`{a}-Ascoli theorem},
there exists $w_{\tau_0}^\infty$, such that
$$ w_{\tau_0}^k \rightarrow w_{\tau_0}^\infty, \;\; \mbox{ as } k\rightarrow\infty \mbox{ uniformly in } \mathbb{R}^n.$$
It follows that the right hand side of inequality \eqref{shouliandao}
converges to
\begin{equation*}
 c'\int_{\mathbb{R}^n\setminus{B_{2}(0)}}\frac{G(-w_{\tau_0}^\infty(y))}{|y|^{n+sp}}dy.
\end{equation*}
 Combining \eqref{wwwjixian} with the continuity of $f$ and the fact $\varepsilon_k\rightarrow0$ ($k\rightarrow\infty$),
we see that the left hand side of inequality \eqref{shouliandao}
converges to $C\delta^{p(1-s)}$ ($0<\delta<1$) as $k\rightarrow\infty$. Because of
the arbitrariness of $\delta$, we derive that
\begin{equation*}
\int_{\mathbb{R}^n\setminus{B_{2}(0)}}\frac{G(-w_{\tau_0}^\infty(y))}{|y|^{n+sp}}dy\equiv0,
\end{equation*}
which implies that
\begin{equation}\label{contradictionqq}
u_{\tau_0}^\infty(x)\equiv u^\infty(x),\ \ \forall x\in\mathbb{R}^n\setminus{B_{2}(0)}.
\end{equation}
Recall that $u>0$ in $\Omega$ while $u(x)\equiv0$, $x\in\mathbb{R}^n\setminus\Omega$.
Since $x^k\in\Omega_M$, there exists $x^o$ such that $u^\infty(x^o)=0$,
then by \eqref{contradictionqq}, we have
\begin{equation}
0=u_{\tau_0}^\infty(x^o)=u^\infty(x^o + \tau_0 e_n)=u_{\tau_0}^\infty(x^o + \tau_0 e_n)= u^\infty(x^o + 2 \tau_0 e_n) = \cdots = u^\infty(x^o + m \tau_0 e_n).
\label{B10}
\end{equation}
By Theorem \ref{mainthm1133}, we deduce that
$$u^\infty(x^o + m \tau_0 e_n) \rightarrow 1  \mbox{ as } m\rightarrow\infty.$$

This is a contradiction to \eqref{B10}!
\medskip

\emph{Case 2.} Suppose that
$$\sup_{x\in \Omega_M}w_{\tau_0}(x)<0.$$
From Lemma \ref{holderestimate}, we know that $u$ is uniformly H\"{o}lder continuous in $\mathbb{R}^n$,
then for any $\eta\in(0,\tau_0)$ small enough, we get
\begin{equation}\label{mmmm}
\sup_{x\in \Omega_M}w_{\tau}(x)<0,\ \ \forall \tau_0-\eta<\tau\leq\tau_0.
\end{equation}
If there exists a constant $A_1>0$ such that
\begin{equation}\label{mmnn}
\sup_{x\in \mathbb{R}^n}w_\tau(x)= A_1,\ \ \forall \tau_0-\eta<\tau\leq\tau_0,
\end{equation}
then there exists a sequence $\{x^k\}$ in $\mathbb{R}^n$ such that
$$w_\tau(x^k)\rightarrow A_1,\ \text{as}\ k\rightarrow\infty.$$
Since $u=0$ in $\mathbb{R}^n\setminus{\Omega}$, it yields that
$$w_\tau(x)\leq 0,\ \ \ \forall x\in\mathbb{R}^n\setminus{\Omega}.$$
Thanks to Theorem \ref{mainthm1133} and \eqref{mmmm}, there exists an $M_1>M>0$ such that the sequence $\{x^k\}$ is contained in $\Omega_{M_1}\setminus
\overline{{\Omega_{M}}}$. For any $x\in\Omega_{M_1}\setminus
\overline{{\Omega_{M}}}$, we have
$$u(x),u_\tau(x)\in(t_1,1),$$
in which $f(\cdot)$ is non-increasing due to condition (c).

Then, similar to the argument as in \emph{Step 1}. We can derive
\begin{equation*}
\varepsilon_k C_\delta\geq \frac{c_{A_1}}{2}>0.
\end{equation*}
This is a contradiction as $\varepsilon_k$ $(k\rightarrow\infty)$ goes to zero. So, we have
$$w_\tau(x)\leq 0, \ \ \forall \tau_0-\eta<\tau\leq\tau_0, \ \forall x\in\mathbb{R}^n.$$
This contradicts the definition of $\tau_0$.

It follows that
$$w_\tau(x)\leq 0, \ \ \forall \tau\geq0, \ \forall x\in\mathbb{R}^n.$$
Moreover, by \emph{strong maximum principle} (Lemma \ref{Strongmax}), we arrive at \eqref{udandiao}.

This implies $u$ is strictly monotone increasing in $x_n$.
\medskip

\emph{Step 3.}
Now we prove the uniqueness.
Assume that $u$ and $v$
are two bounded solutions of \eqref{fractionalhold}. For $\tau\geq0$, denote
$$u_\tau(x):=u(x+\tau e_n),\ \ \tilde{w}_\tau(x):=v(x)-u_\tau(x),$$
where $e_n=(0,0,\cdots,1)$.

We first show that for $\tau$ sufficiently large,
\begin{equation}\label{taoweitt}
\tilde{w}_\tau(x)\leq0,\ \  \forall x\in\mathbb{R}^n.
\end{equation}
The proof of \eqref{taoweitt} is completely similar to \emph{Step 1} of the proof of monotonicity, so we omit the
details.
\eqref{taoweitt} provides a starting point from which we can decrease $\tau$ continuously as long
as \eqref{taoweitt} holds.

We prove that
\begin{equation}\label{udandiaoweiyi11}
\tilde{w}_\tau(x)\leq0,\ \forall \tau\geq0, \ \forall x\in\mathbb{R}^n.
\end{equation}
Define
$$\tau_0:=\inf\{\tau>0\mid \ \tilde{w}_\tau(x)\leq0,\  \forall x\in\mathbb{R}^n\}.$$
We show that
\begin{equation}\label{taoweiling11}
\tau_0=0.
\end{equation}
Suppose on the contrary $\tau_0>0$.
Similar to the argument of monotonicity in \emph{Step 2}, one can deduce that
\begin{equation}\label{mocontradictionqqweiyi}
v^\infty(x)\equiv u_{\tau_0}^\infty(x),\ \ \forall x\in\mathbb{R}^n\setminus{B_{2}(0)}.
\end{equation}
To finish the proof of the uniqueness, we need the following lemma.
\begin{lem}(Li and Zhang \cite{LiZ})\label{lemlizhang}
Let $\psi(x)=(1-|x|^2)_+^s$, then there exists constant $C$, such that
\begin{equation}
|(-\Delta)_p^s \psi(x)| \leq C, \;\; \forall \, x \in B_1(0).
\label{crrr}
\end{equation}
\end{lem}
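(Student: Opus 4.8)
\emph{Proof strategy.} The plan is to split into the two regimes $|x|\le\tfrac12$ and $\tfrac12<|x|<1$, only the second being delicate. For $|x|\le\tfrac12$ the function $\psi$ is $C^\infty$ on $B_{1/8}(x)$ with $|\nabla\psi|+|D^2\psi|\le C$ there and $0\le\psi\le1$ globally; I would write the principal value in symmetric form,
$$\text{P.V.}\int_{B_{1/16}(x)}\frac{G(\psi(x)-\psi(y))}{|x-y|^{n+sp}}\,dy=\frac12\int_{B_{1/16}(0)}\frac{G(\psi(x)-\psi(x+z))+G(\psi(x)-\psi(x-z))}{|z|^{n+sp}}\,dz,$$
and use (since $p\ge2$) the inequality $|G(a)-G(b)|\le(p-1)(|a|+|b|)^{p-2}|a-b|$ together with $\psi(x+z)+\psi(x-z)-2\psi(x)=O(|z|^2)$ to bound the numerator by $C|z|^{p-2}|z|^2=C|z|^p$; the integrand is then $\lesssim|z|^{p(1-s)-n}$, integrable because $p(1-s)>0$, while the part over $\mathbb{R}^n\setminus B_{1/16}(x)$ is trivially $\le 2^{p-1}\int_{\mathbb{R}^n\setminus B_{1/16}(x)}|x-y|^{-n-sp}\,dy\le C$.

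For $\tfrac12<|x|<1$ I would set $\rho:=1-|x|\in(0,\tfrac12)$ and, using the rotational invariance of $\psi$ and of $(-\Delta)_p^s$, assume $x=(1-\rho)e_n$; then split $\mathbb{R}^n=B_{\rho/2}(x)\cup(\mathbb{R}^n\setminus B_{\rho/2}(x))$. On $B_{\rho/2}(x)$ one has $1-|y|^2\ge\rho/2$, so $\psi$ is smooth there with $|\nabla\psi|\lesssim\rho^{s-1}$, $|D^2\psi|\lesssim\rho^{s-2}$, and the symmetric--difference estimate shows this contribution is $\lesssim\rho^{-s}$ --- not bounded by itself. The same order $\rho^{-s}$ appears, with opposite sign, in the two tail pieces: over $\mathbb{R}^n\setminus B_1$, where $\psi\equiv0$, $\psi(x)^{p-1}\approx(2\rho)^{s(p-1)}$ and $\int_{\mathbb{R}^n\setminus B_1}|x-y|^{-n-sp}\,dy\approx\rho^{-sp}$ (a $+\rho^{-s}$ term); and over $B_1\setminus B_{\rho/2}(x)$, where $\psi(y)\ge\psi(x)$ deep inside so $G(\psi(x)-\psi(y))\le0$ (a $-\rho^{-s}$ term). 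The crux is that these leading contributions cancel. I would extract this by comparing $\psi$ with the tangent--plane model $\Phi(y):=\big(2(1-y_n)\big)_+^s$, which dominates $\psi$ everywhere --- this is exactly $2(1-y_n)-(1-|y|^2)=|y-e_n|^2\ge0$ --- whose gap $\Phi(y)-\psi(y)$ vanishes to order $|y-e_n|^{2s}$ as $y\to e_n$ and which satisfies $\Phi(x)-\psi(x)=O(\rho^{1+s})$. Integrating out the $(n-1)$ tangential variables reduces $(-\Delta)_p^s\Phi(x)$ to a one--dimensional integral, yielding the explicit value $c_{n,s,p}\,\rho^{-s}\int_{\mathbb{R}}G(1-\tau_+^s)\,|\tau-1|^{-1-sp}\,d\tau$; the remaining task is to bound $(-\Delta)_p^s\psi(x)-(-\Delta)_p^s\Phi(x)$ using the monotonicity of $G$, the sign $\Phi-\psi\ge0$, and the second--order flatness of $\partial B_1$ at $e_n$, showing that all leftover terms are $O(1)$ uniformly in $\rho$.

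The main obstacle is precisely this cancellation: the singular near--diagonal piece and the two tails are each only $O(\rho^{-s})$, so crude absolute--value estimates fail and one must track signs and the exact algebra of $(1-|x|^2)_+^s$ against its tangent model; in particular the elementary identity $2(1-y_n)-(1-|y|^2)=|y-e_n|^2$ and the finiteness (and, at $p=2$, vanishing) of the one--dimensional constant $\int_{\mathbb{R}}G(1-\tau_+^s)\,|\tau-1|^{-1-sp}\,d\tau$ must be used quantitatively. A secondary technical point: for $2<p<3$ the map $G(t)=|t|^{p-2}t$ is merely $C^1$, so second--order Taylor expansions of $G$ are unavailable and the bound $|G(a)-G(b)|\le(p-1)(|a|+|b|)^{p-2}|a-b|$ is needed throughout. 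Alternatively, one could invoke the sharp $C^s$ boundary behaviour of the fractional $p$--torsion function of $B_1$, which is comparable to $(1-|x|^2)_+^s$ near $\partial B_1$, and reduce the claim to bounding $(-\Delta)_p^s$ of the higher--order remainder.
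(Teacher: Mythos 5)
The paper does not prove this lemma; it is quoted verbatim from the preprint of Li and Zhang \cite{LiZ} and used as a black box, so there is no in-paper argument to compare your sketch against. On its own merits, the interior part ($|x|\le\tfrac12$) of your proposal is routine and correct: the symmetrized principal value together with the mean-value bound for $G$ (writing $G(a)+G(b)=G(a)-G(-b)$ and using $a+b=O(|z|^2)$) gives the $|z|^p$ numerator, and $p(1-s)>0$ makes the near-diagonal integral converge.

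The near-boundary argument, however, has a genuine gap. Your whole scheme rests on comparing $\psi$ with the tangent model $\Phi(y)=(2(1-y_n))_+^s$, for which you (correctly) compute $(-\Delta)_p^s\Phi(x)=c_{n,s,p}\,\rho^{-s}I_0$ with
$$I_0=\mathrm{P.V.}\int_{\mathbb{R}}\frac{G\big(1-\tau_+^s\big)}{|\tau-1|^{1+sp}}\,d\tau,$$
and then intend to show $(-\Delta)_p^s\psi-(-\Delta)_p^s\Phi=O(1)$. If $I_0\ne0$ this is self-defeating: your two claims together would force $(-\Delta)_p^s\psi\sim c\,\rho^{-s}$, disproving the lemma. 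You only note that $I_0$ vanishes when $p=2$, leaving $p>2$ unaddressed. The identity $I_0=0$ is in fact true for all $p\ge2$ --- it is the $(s,p)$-harmonicity of $t\mapsto t_+^s$ on $\{t>0\}$, and can be checked explicitly, e.g.\ for $s=\tfrac12,p=4$ one finds $\tfrac{1}{sp}=\tfrac12$, $\int_0^1(1+\sqrt y)^{-3}\,dy=\tfrac14$, $\int_1^\infty(1+\sqrt y)^{-3}\,dy=\tfrac34$, which cancel --- but this identity is a nontrivial prerequisite that your proposal must supply, not merely hope for. Moreover, even granting $I_0=0$, the assertion that ``all leftover terms are $O(1)$'' is exactly where the real work lies: the gap $\Phi-\psi$ is only of order $|y-e_n|^{2s}$ in tangential directions (and $|y-e_n|^{1+s}$ radially), and the difference of the two singular integrals must be partitioned carefully (e.g.\ over dyadic annuli around $x$ up to and beyond scale $\rho$) so as not to reintroduce a $\rho^{-s}$ tail. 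Your sketch correctly identifies the structure, the sign $\Phi\ge\psi$, and the need for exact cancellation, but as written it is a plan, not a proof.
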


Suppose that $z$ be a point on $\partial\Omega$. Without loss of generality, we may assume that there is
a ball $B \subset \Omega$ of radius 1 tangent to $\partial\Omega$ at point $z$. For simplicity of notation, we assume
that the center of the ball is the origin.

Let
$$D = \{x\in\Omega \mid dist(x,\partial\Omega)\geq2R_0\},$$
where $R_0$ is the same as in Lemma \ref{leme}.

We construct the sub-solution
$$\underline{u}(x)=u_D(x)+\varepsilon\psi(x),\ \ x\in B.$$
where $u_D:=u\cdot\chi_D$ and $\chi_D$ is defined as
\begin{equation*}
\chi_D(x)=\begin{cases}
&1, \ x\in D,\\
&0, \  x\in\mathbb{R}^n\setminus D.
\end{cases}\end{equation*}
It follows from \eqref{crrr}, Lemma \ref{leme} and Lemma \ref{appendix} in Appendix that for $x\in B$,
\begin{equation*}\begin{split}
(-\Delta)_p^s \underline{u}(x)&=(-\Delta)_p^s(u_D+\varepsilon\psi)(x)\\
&=C_{n,s,p} P.V.\int_{\mathbb{R}^n}\frac{G(u_D(x)+\varepsilon\psi(x)-u_D(y)-\varepsilon\psi(y))}{|x-y|^{n+sp}}dy\\
&=C_{n,s,p} P.V. \Big\{\int_{\mathbb{R}^n\setminus{B}}\frac{G(\varepsilon\psi(x)-u_D(y))-G(\varepsilon\psi(x))}{|x-y|^{n+sp}}dy\\
&\ \ +\int_{\mathbb{R}^n\setminus{B}}\frac{G(\varepsilon\psi(x))}{|x-y|^{n+sp}}dy
+\int_{B}\frac{G(\varepsilon\psi(x)-\varepsilon\psi(y))}{|x-y|^{n+sp}}dy\Big\}\\
&=\varepsilon^{p-1}(-\Delta)_p^s\psi(x)
+C_{n,s,p}\int_{D}\frac{G(\varepsilon\psi(x)-u(y))-G(\varepsilon\psi(x))}{|x-y|^{n+sp}}dy\\
&\leq \varepsilon^{p-1}C+2^{2-p}C_{n,s,p}\int_{D}\frac{G(-u(y))}{|x-y|^{n+sp}}dy\\
&\leq \varepsilon^{p-1}C-2^{2-p}\varepsilon_1^{p-1}C_{n,s,p}\int_{D}\frac{1}{|x-y|^{n+sp}}dy\\
&\leq \varepsilon^{p-1}C-2^{2-p}\varepsilon_1^{p-1}C_{n,s,p}C_{R_0}.\\
\end{split}\end{equation*}
One can choose $\varepsilon\leq\varepsilon_1\big[2^{2-p}C_{n,s,p}C_{R_0}C^{-1}\big]^{\frac{1}{p-1}}:=\varepsilon_0$ such that
$(-\Delta)_p^s \underline{u}(x)\leq 0$, $x\in B$. Then we fixed $\varepsilon=\frac{\varepsilon_0}{2}$, by Lemma \ref{lemll}
in Appendix, we arrive at
$$u(x)\geq\underline{u}(x)\geq \frac{\varepsilon_0}{2}\psi(x)=\frac{\varepsilon_0}{2}(1+|x|)^s(1-|x|)^s
\geq\frac{\varepsilon_0}{2} \delta^s(x),\ \ \forall x\in B.$$
For each fixed small $\delta_1\in \big(0,\min\{\tau_0,1\}\big)$, choosing $|x_n-z_n|=\delta_1$,
we derive that
\begin{equation}\label{udayu0}\begin{split}
u(x)&\geq\frac{\varepsilon_0}{2} \delta^s(x)\\
&\geq\frac{\varepsilon_0}{2} |x_n-z_n|^s\\
&=\frac{\varepsilon_0}{2} \delta_1^s>0.\\
\end{split}\end{equation}

Since $u$ is strictly monotone increasing in $x_n$, by \eqref{udayu0}, we infer that
\begin{equation}\label{maoduntt}
u_{\tau_0}(z)>\frac{\varepsilon_0}{2} \delta_1^s>0,\ \ \forall z\in\partial\Omega.
\end{equation}
Obviously, this property is preserved under translation. Let
$$\Omega^k = \{x \mid x+x^k \in \Omega\} \mbox{ and } \Omega^\infty = \lim_{k \rightarrow \infty} \Omega^k.$$

Taking a point  $x^0\in \partial\Omega^\infty$,  we deduce from \eqref{maoduntt} that
$$u_{\tau_0}^\infty(x^0)>0\ \ \text{but}\ \  v^\infty(x^0)=0.$$
This contradicts \eqref{mocontradictionqqweiyi}. So we must have $\tau_0=0$.
This proves \eqref{udandiaoweiyi11}, which implies that $v(x)\leq u(x)$.
Interchanging $u$ and $v$, we can also derive $u(x) \leq v(x)$. Therefore, we must have $u \equiv v$.
This yields the uniqueness.

This completes the proof of Theorem \ref{mainthmaaa}.
\end{proof}

\section{The proof of Theorem \ref{bankongjian}}
In this section, we consider a special case where $\Omega$ is an upper half space:
\begin{equation}\label{shangbankongjian11}\begin{cases}
(-\Delta)_p^s u(x)=f(u(x)),&x\in\mathbb{R}^n_+,\\
u(x)>0,&x\in\mathbb{R}^n_+,\\
u(x)=0, &x\in \mathbb{R}^n\setminus {\mathbb{R}^n_+}.
\end{cases}\end{equation}
\medskip

We are able to use the \emph{sliding method} in any direction to obtain a stronger result.

\begin{thm}\label{bankongjian11}
Suppose that $u\in C_{loc}^{1,1}\cap \mathcal{L}_{sp}$ be a bounded solution of \eqref{shangbankongjian11}.
Assume that  $f$ is continuous and satisfies condition (a)-(c) for some $0<t_0<t_1<\mu$.

Then $u$ is strictly monotone increasing in $x_n$, and moreover it depends on $x_n$ only.

Furthermore the bounded solution of \eqref{shangbankongjian11} is unique.
\end{thm}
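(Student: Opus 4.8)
The plan is to run the sliding method in an arbitrary direction $\nu$, not just $e_n$, exploiting the translation invariance of the half-space $\mathbb{R}^n_+$. First I would establish monotonicity in $x_n$ exactly as in Theorem \ref{mainthmaaa}: Theorem \ref{mainthm1133} applies since $\mathbb{R}^n_+$ trivially satisfies the uniform two-sided ball condition (it is the region above the graph of $\varphi \equiv 0$, and every boundary point has interior and exterior tangent balls of any radius), so $u(x) \to 1$ uniformly as $x_n \to \infty$; Lemma \ref{leme} gives $u > \varepsilon_1$ away from $\partial\mathbb{R}^n_+$. Then Steps 1--2 of the proof of Theorem \ref{mainthmaaa} go through verbatim with $w_\tau(x) = u(x) - u(x + \tau e_n)$, yielding $w_\tau(x) < 0$ for all $\tau > 0$, i.e. strict monotonicity in $x_n$.

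The new ingredient is the direction-independence. Fix any unit vector $\nu = (\nu', \nu_n)$ with $\nu_n > 0$ and, for $\tau \geq 0$, set $w_\tau^\nu(x) = u(x) - u(x + \tau\nu)$. The key geometric observation is that if $\nu_n > 0$ then $x \in \mathbb{R}^n \setminus \mathbb{R}^n_+$ implies $x + \tau\nu$ may or may not lie in $\mathbb{R}^n_+$, but on $\{x_n \le 0\}$ we have $u(x) = 0$ while $u(x+\tau\nu) \geq 0$, so $w_\tau^\nu \leq 0$ there; this is the same exterior sign condition used in Step 1. Moreover, for $\tau$ large, Theorem \ref{mainthm1133} forces $u(x+\tau\nu)$ to be close to $1$ on any slab $\{0 < x_n < M\}$ (since $\mathrm{dist}(x+\tau\nu, \partial\mathbb{R}^n_+) = x_n + \tau\nu_n \to \infty$), so the identical contradiction argument via the singular-integral estimate at approximate maximum points (Lemmas \ref{appendix}, \ref{appendix2}) gives the starting point $w_\tau^\nu \le 0$. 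Sliding $\tau$ down to $\tau_0^\nu = \inf\{\tau > 0 : w_\tau^\nu \le 0 \text{ on } \mathbb{R}^n\}$, the argument of Step 2 applies: in the boundary-touching case one obtains $u_{\tau_0}^\infty \equiv u^\infty$ outside a ball, and then the same contradiction as in \eqref{B10} — using that along $\nu$ one can still iterate translations to push a zero of $u^\infty$ arbitrarily far in the $x_n$-direction (since $\nu_n > 0$) while Theorem \ref{mainthm1133} forces the limit to be $1$. Hence $\tau_0^\nu = 0$, giving $\partial_\nu u \geq 0$, and by the strong maximum principle (Lemma \ref{Strongmax}) in fact $u(x) > u(x+\tau\nu)$ is impossible unless... more precisely $w_\tau^\nu < 0$ for $\tau>0$, i.e. $u$ is monotone in every direction $\nu$ with $\nu_n > 0$.

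From monotonicity in all directions $\nu$ with positive $n$-th component, $x_n$-independence follows: if $\nu = (\nu', \nu_n)$ and $-\nu = (-\nu', -\nu_n)$ both had positive last component we would be done immediately, but instead I take any horizontal direction $e = (e', 0)$ and approximate it by $\nu_\pm = (e', \pm\epsilon)/|(e',\pm\epsilon)|$; applying monotonicity along $\nu_+$ (which has positive last component) gives $u$ nondecreasing in $\nu_+$, and a symmetric argument — or rather, noting that $\nu_- $ does \emph{not} have positive $n$-component, one instead uses $-\nu_- = (-e', \epsilon)$ which does — gives $u$ nondecreasing along $(-e',\epsilon)$ as well; letting $\epsilon \to 0$ and combining yields $u(x + te') = u(x)$ for all $t$ and all horizontal $e'$, so $u = u(x_n)$ depends on $x_n$ only. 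Finally, uniqueness follows from Step 3 of the proof of Theorem \ref{mainthmaaa} applied with $\Omega = \mathbb{R}^n_+$, the sub-solution construction near the flat boundary being only easier here. I expect the main obstacle to be the direction-independence step: one must be careful that the slab/approximate-maximum-point machinery still applies when sliding obliquely, since the relevant region where $w_\tau^\nu$ could be positive is now an oblique slab rather than $\Omega_M$, and one needs Theorem \ref{mainthm1133} to control $u_{\tau\nu}$ on it — this works precisely because $\nu_n > 0$ guarantees $\mathrm{dist}(x + \tau\nu, \partial\mathbb{R}^n_+) \to \infty$ on bounded slabs, but the bookkeeping for the limiting domain $\Omega^\infty$ and the iteration \eqref{B10} in an oblique direction requires care.
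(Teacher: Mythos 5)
Your proposal is correct and is essentially the paper's own proof: one slides in an arbitrary direction $\nu$ with $\nu_n>0$ exactly as in Theorem \ref{mainthmaaa}, then passes to the horizontal limit $\nu_n\to 0$ to conclude $u$ depends on $x_n$ only (the paper invokes continuity of $\nabla u$ to get $\partial_\nu u=0$ for horizontal $\nu$, while you take limits of the monotone finite-difference inequalities $u(x+t\nu_\pm)\geq u(x)$; both routes give the same conclusion and yours needs only continuity of $u$). The concern you raise at the end about oblique slabs is in fact not an issue: the approximate maximum points of $w_\tau^\nu$ still lie in a fixed horizontal slab $\Omega_M$, because for $x$ with $x_n>M$ the shifted point has $(x+\tau\nu)_n=x_n+\tau\nu_n>x_n>M$, so both $u(x)$ and $u(x+\tau\nu)$ are near $1$ by Theorem \ref{mainthm1133} and $w_\tau^\nu(x)$ cannot approach a positive supremum there.
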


\begin{proof}
For $\tau\geq0$, denote
$$u_\tau(x):=u(x+\tau \nu),\ \ w_\tau(x):=u(x)-u_\tau(x),$$
where $\nu=(\nu_1,\nu_2,\cdots,\nu_n)$ with $\nu_n>0$.

Similar to the proof of Theorem \ref{mainthmaaa},
for $\forall \tau>0$, we obtain
\begin{equation*}
w_\tau(x)<0,\ \ \forall x\in\mathbb{R}^n,
\end{equation*}
which implies that $u$ is strictly monotone increasing in any
direction $\nu=(\nu_1,\cdots,\nu_n)$ with $\nu_n>0$.

For each fixed point $x \in \mathbb{R}^n_+$, let $\nu_n \rightarrow 0$.
By the continuity of $\nabla u$, we deduce
that $\partial_\nu u(x) \geq 0$ for any $\nu$ with $\nu_n=0$. Replacing $\nu$ by  $-\nu$, we obtain
$\partial_\nu u=0$. Since this is true for all $\nu$ with $\nu_n=0$, we conclude that $u$ depends on $x_n$ only.

This completes the proof of Theorem \ref{bankongjian11}.
\end{proof}

\medskip
\section{Appendix}
In this section, we prove three lemmas.

\begin{lem}\label{appendix}
For $G(t)=|t|^{p-2}t$ ($p\geq2$), assume that $t_1+t_2>0$. Then
\begin{equation}\label{ggg}
G(t_1+t_2)\leq 2^{p-2}\big(G(t_1)+G(t_2)\big).
\end{equation}
\end{lem}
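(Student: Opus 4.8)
The plan is to prove the inequality $G(t_1+t_2)\leq 2^{p-2}\bigl(G(t_1)+G(t_2)\bigr)$ under the hypothesis $t_1+t_2>0$ by a case analysis on the signs of $t_1$ and $t_2$, reducing everything to the convexity of the map $s\mapsto s^{p-1}$ on $[0,\infty)$ (valid since $p\geq 2$). Note first that $t_1+t_2>0$ forces $G(t_1+t_2)=(t_1+t_2)^{p-1}>0$, so the left-hand side is positive.

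The main case is when both $t_1\geq 0$ and $t_2\geq 0$ (not both zero). Then $G(t_i)=t_i^{p-1}$, and the claim reduces to $(t_1+t_2)^{p-1}\leq 2^{p-2}(t_1^{p-1}+t_2^{p-1})$. This is exactly the statement that $s\mapsto s^{p-1}$ is convex: writing $\tfrac{t_1+t_2}{2}=\tfrac12 t_1+\tfrac12 t_2$ and applying Jensen's inequality (or equivalently the power-mean inequality) gives $\bigl(\tfrac{t_1+t_2}{2}\bigr)^{p-1}\leq \tfrac12(t_1^{p-1}+t_2^{p-1})$, and multiplying by $2^{p-1}$ yields the desired bound. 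The remaining cases are easy: if one of the variables, say $t_2<0$, then since $t_1+t_2>0$ we have $0<t_1+t_2<t_1$, hence by monotonicity of $G$ on all of $\mathbb{R}$, $G(t_1+t_2)<G(t_1)\leq G(t_1)-G(t_2)=G(t_1)+|G(t_2)|$; since $G(t_2)<0$ this is even stronger than $2^{p-2}(G(t_1)+G(t_2))$ once we check $2^{p-2}\geq 1$ — actually more care is needed here, so the cleaner route is: $G(t_1+t_2)\leq G(t_1)\leq 2^{p-2}G(t_1)\leq 2^{p-2}\bigl(G(t_1)-|G(t_2)|\bigr)$ fails in general, so instead simply observe $G(t_1+t_2)=(t_1+t_2)^{p-1}\leq t_1^{p-1}=G(t_1)\leq 2^{p-2}G(t_1)+2^{p-2}G(t_1+t_2)\cdot 0$, i.e. it suffices to note $G(t_1+t_2)\leq 2^{p-2}G(t_1)$ and that $2^{p-2}G(t_2)\geq$ — here one really does want $G(t_1+t_2)\le G(t_1)$ together with $G(t_2)\le 0$ and the algebraic identity making $2^{p-2}(G(t_1)+G(t_2))$ compared against $2^{p-2}G(t_1)$; since $G(t_2)\le0$ this needs the reverse, so in fact the correct handling is: by monotonicity $G(t_1+t_2)\le G(t_1)$, and separately $G(t_1)\le 2^{p-2}(G(t_1)+G(t_2))$ is what we want — equivalently $(2^{p-2}-1)G(t_1)\ge -2^{p-2}G(t_2)=2^{p-2}|t_2|^{p-1}$, which can fail. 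So the honest approach in the mixed-sign case is to bound $G(t_1+t_2)$ directly: set $a=t_1$, $b=-t_2\ge 0$ with $a>b\ge 0$, so $G(t_1+t_2)=(a-b)^{p-1}$ and we need $(a-b)^{p-1}\le 2^{p-2}(a^{p-1}-b^{p-1})$. Factor $a^{p-1}-b^{p-1}=(a-b)\sum_{j=0}^{p-2}a^j b^{p-2-j}\ge (a-b)a^{p-2}$ when $p$ is an integer, and in general use $a^{p-1}-b^{p-1}\ge (a-b)^{p-1}$ for $a>b\ge0$, $p-1\ge1$ (superadditivity of $s\mapsto s^{p-1}$), which gives $(a-b)^{p-1}\le a^{p-1}-b^{p-1}\le 2^{p-2}(a^{p-1}-b^{p-1})$ since $2^{p-2}\ge 1$. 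The case $t_1<0$, $t_2>0$ is symmetric, and $t_1,t_2$ cannot both be negative since $t_1+t_2>0$.

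The only genuine ingredients are (i) convexity of $s\mapsto s^{p-1}$ on $[0,\infty)$, which handles the same-sign case, and (ii) the superadditivity inequality $a^{p-1}-b^{p-1}\le(a-b)^{p-1}$ for $a>b\ge0$ — wait, the inequality needed is $(a-b)^{p-1}\le a^{p-1}-b^{p-1}$, i.e. \emph{subadditivity fails but} $(a-b)^{p-1}+b^{p-1}\le a^{p-1}$, which is precisely superadditivity of $t\mapsto t^{p-1}$ applied to $(a-b)+b=a$. Both are elementary consequences of $p\ge 2$. I expect the main (very minor) obstacle is simply organizing the sign cases cleanly so that the constant $2^{p-2}$ — which is sharp only in the equal-positive case $t_1=t_2$ — is seen to be more than enough in every mixed-sign configuration; there is no analytic difficulty, only bookkeeping.
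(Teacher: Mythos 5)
Your argument is correct, but it is genuinely different from the paper's. The paper normalizes $t_2=1$ (WLOG), sets $F(t)=G(1+t)-2^{p-2}(G(1)+G(t))$ for $t>-1$, and runs a one-variable calculus argument: the only critical points are $t=-\tfrac13$ (local minimum) and $t=1$ (local maximum with $F(1)=0$), and together with $F(t)\to 0$ as $t\to-1^+$ this forces $F\le 0$ on $(-1,\infty)$. You instead split by sign: in the case $t_1,t_2\ge 0$ you use convexity of $s\mapsto s^{p-1}$ (where the constant $2^{p-2}$ is exactly what Jensen produces, and is attained at $t_1=t_2$), and in the mixed-sign case $a=t_1>b=-t_2\ge 0$ you use superadditivity of the convex function $s\mapsto s^{p-1}$ vanishing at $0$, namely $(a-b)^{p-1}+b^{p-1}\le a^{p-1}$, after which $2^{p-2}\ge 1$ finishes. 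Both ingredients are standard and valid for all real $p\ge 2$ (your aside about factoring $a^{p-1}-b^{p-1}$ "when $p$ is an integer" is unnecessary; the superadditivity route you settle on covers general $p$). What your approach buys is transparency: it identifies exactly where the constant comes from and avoids the paper's slightly terse verification of the critical-point structure of $F$; what the paper's approach buys is uniformity, since a single formula handles all signs at once. One editorial remark: your write-up contains several visible false starts in the mixed-sign case before arriving at the correct superadditivity argument; in a final version you should delete those dead ends and state the two-case proof directly, since the surviving argument is complete and needs no caveats.
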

\begin{proof}
Without loss of generality, we may assume that $t_2>0$, let $t=\frac{t_1}{t_2}$.
Inequality \eqref{ggg} is equivalent to
$$G(1+t)\leq 2^{p-2}\big(G(1)+G(t)\big),\ t>-1.$$
Let
$$F(t)=G(1+t)- 2^{p-2}\big(G(1)+G(t)\big).$$
Assume that $F'(t)=0$, we can derive that $t=-\frac{1}{3}$ (local minimum point), $t=1$ (local maximum point), and
$$F(-\frac{1}{3})\leq 0,\ \ F(1)=0.$$
We also can calculate
$$\lim_{t\rightarrow-1^+} F(t)=0.$$
It follows that $F(t)\leq0$ for any $t>-1$.

This completes the proof of the lemma.
\end{proof}
\medskip

\begin{lem}\label{appendix2}
Assume that $u$
is the bounded solution of \eqref{fractionalbianjiewai} and $\Phi\in C_0^\infty(\mathbb{R}^n)$.
For any $0<\delta<1$, then
\begin{equation}\label{wwww}
|(-\Delta)_p^s(u+\varepsilon_k \Phi)(x)-(-\Delta)_p^su(x)|<\varepsilon_k C_\delta+C\delta^{p(1-s)}.
\end{equation}
\end{lem}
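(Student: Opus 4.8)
The plan is to prove the estimate \eqref{wwww} by splitting the difference of the two singular integrals into a local piece near $x$ and a tail piece away from $x$, exploiting the compact support of $\Phi$. Write
\[
(-\Delta)_p^s(u+\varepsilon_k\Phi)(x)-(-\Delta)_p^su(x)
= C_{n,s,p}\,\mathrm{P.V.}\int_{\mathbb{R}^n}
\frac{G\bigl(a(y)+b(y)\bigr)-G\bigl(a(y)\bigr)}{|x-y|^{n+sp}}\,dy,
\]
where $a(y)=u(x)-u(y)$ and $b(y)=\varepsilon_k\bigl(\Phi(x)-\Phi(y)\bigr)$. Fix a small radius $\delta\in(0,1)$ and split the integral over $B_\delta(x)$ and its complement. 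On $\mathbb{R}^n\setminus B_\delta(x)$ one has $|x-y|\geq\delta$, the kernel is integrable against the bounded-ness/$\mathcal{L}_{sp}$ information on $u$ and the boundedness of $\Phi$, and the mean value theorem applied to $G$ (using $p\ge2$, with the derivative $G'(t)=(p-1)|t|^{p-2}$ controlled by the $L^\infty$ bounds on $u$, $\Phi$) gives $\bigl|G(a+b)-G(a)\bigr|\le C\,|b|\le C\varepsilon_k\bigl(|\Phi(x)|+|\Phi(y)|\bigr)$; integrating the kernel over the exterior region then yields a bound of the form $\varepsilon_k C_\delta$, with $C_\delta$ depending on $\delta$ through $\int_{\mathbb{R}^n\setminus B_\delta(x)}|x-y|^{-n-sp}dy\sim\delta^{-sp}$ together with the tail of $u$.

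For the local piece over $B_\delta(x)$, the key point is that $G$ is only Hölder — not Lipschitz — near the origin, so I cannot use the mean value theorem uniformly; instead I use the elementary inequality for $p\geq2$ that $\bigl|G(a+b)-G(a)\bigr|\le C\bigl(|a|^{p-2}|b|+|b|^{p-1}\bigr)$. Since $u\in C^{1,1}_{loc}$ and $\Phi\in C_0^\infty$, both $|a(y)|=|u(x)-u(y)|$ and $|b(y)|=\varepsilon_k|\Phi(x)-\Phi(y)|$ are bounded by $C|x-y|$ on $B_\delta(x)$ (with the constant for $b$ carrying a factor $\varepsilon_k$ that we may simply absorb since $\varepsilon_k<1$; what matters for this piece is the $\delta$-smallness). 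Plugging in and integrating,
\[
\int_{B_\delta(x)}\frac{|a|^{p-2}|b|+|b|^{p-1}}{|x-y|^{n+sp}}\,dy
\le C\int_{B_\delta(x)}\frac{|x-y|^{p-1}}{|x-y|^{n+sp}}\,dy
= C\int_0^\delta \rho^{\,p-1-1-sp}\,d\rho
= C\,\delta^{\,p-sp}=C\,\delta^{p(1-s)},
\]
where the exponent $p-1-sp = p(1-s)-1 > -1$ because $s<1$, so the radial integral converges; this is exactly the $C\delta^{p(1-s)}$ term, and it also absorbs the principal-value symmetrization near $x$. Adding the two contributions gives \eqref{wwww}.

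The main obstacle is the treatment of the near-diagonal part: one must be careful that $G$'s lack of Lipschitz regularity at $0$ does not spoil the estimate, which is why the split at scale $\delta$ is essential — on $B_\delta(x)$ one trades the non-Lipschitz behaviour for the $C^{1,1}$-smallness $|u(x)-u(y)|\lesssim|x-y|$ and gets the convergent power $\delta^{p(1-s)}$, while on the exterior one is away from the singularity of $G$ and can afford the $\delta$-dependent constant $C_\delta$. One should also verify that the principal value is harmless: by the $C^{1,1}$ regularity the integrand over $B_\delta(x)$ is absolutely integrable after the standard symmetric cancellation (replacing the integrand by its average over $\pm(y-x)$), so no extra care beyond the above bound is needed. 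I would present the two elementary inequalities for $G$ (the global mean-value bound away from $0$, and the $|a|^{p-2}|b|+|b|^{p-1}$ bound) as the only nontrivial analytic inputs and then assemble the estimate as above.
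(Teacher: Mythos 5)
Your decomposition into $B_\delta(x)$ and its complement, and your treatment of the exterior piece, match the paper's proof (the paper writes the exterior difference via the identity $|b|^{p-2}b-|a|^{p-2}a=(p-1)(b-a)\int_0^1|a+t(b-a)|^{p-2}\,dt$, which is the integral form of your mean value argument, and obtains $|J_\delta|\le C\varepsilon_k\delta^{-sp}=C_\delta\varepsilon_k$). The gap is in the interior piece. Your pointwise bound $|G(a+b)-G(a)|\le C(|a|^{p-2}|b|+|b|^{p-1})$ together with $|a|,|b|\le C|x-y|$ only gives an integrand of size $|x-y|^{p-1}/|x-y|^{n+sp}$, and
\[
\int_{B_\delta(x)}\frac{|x-y|^{p-1}}{|x-y|^{n+sp}}\,dy=C\int_0^\delta\rho^{p-2-sp}\,d\rho,
\]
which converges only when $p-2-sp>-1$, i.e.\ $p(1-s)>1$; it diverges, for instance, when $p=2$ and $s\ge\tfrac12$. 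Your convergence check confuses the exponent of the integrand ($p-2-sp$) with the exponent of its antiderivative ($p-1-sp$), and even in the regime where the integral does converge its value is $C\delta^{p(1-s)-1}$, not $C\delta^{p(1-s)}$. The closing remark that the symmetric cancellation requires ``no extra care beyond the above bound'' is precisely where the argument breaks: the cancellation is not a formality but the source of one extra power of $|x-y|$, without which the local integral need not exist at all.

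The paper's proof supplies exactly this missing step. Setting $v=u+\varepsilon_k\Phi$ and Taylor expanding, $v(x)-v(y)=\nabla v(x)\cdot(x-y)+O(|x-y|^2)$; applying the elementary inequality $\big||A+B|^{p-2}(A+B)-|A|^{p-2}A\big|\le C(|A|+|B|)^{p-2}|B|$ with $A=\nabla v(x)\cdot(x-y)$ and $B=O(|x-y|^2)$ shows that $G(v(x)-v(y))$ differs from $G(\nabla v(x)\cdot(x-y))$ by $O(|\nabla v(x)|^{p-2}|x-y|^{p})$, and the leading term has vanishing principal value over $B_\delta(x)$ because it is odd under $y\mapsto 2x-y$. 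Doing the same for $u$ and subtracting, the interior integrand is $O(|x-y|^{p}/|x-y|^{n+sp})$, and $\int_0^\delta\rho^{p-1-sp}\,d\rho=C\delta^{p(1-s)}$ converges for all $s\in(0,1)$ and $p\ge2$ and produces the claimed power of $\delta$. You need to replace your local estimate with this second-order cancellation argument (or an equivalent one) for the lemma to hold in the full stated range of $s$ and $p$.
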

\begin{proof}
For any $x\in\mathbb{R}^n$, we divide the integral into two parts.
\begin{equation*}\begin{split}
&(-\Delta)_p^s(u+\varepsilon_k \Phi)(x)-(-\Delta)_p^su(x)\\
&=C_{n,s,p} P.V.\int_{\mathbb{R}^n}\frac{G(u(x)+\varepsilon_k\Phi(x)-u(y)-\varepsilon_k\Phi(y))-
G(u(x)-u(y))}{|x-y|^{n+sp}}dy\\
&=C_{n,s,p} P.V.\int_{B_{\delta}(x)}\frac{G(u(x)+\varepsilon_k\Phi(x)-u(y)-\varepsilon_k\Phi(y))-
G(u(x)-u(y))}{|x-y|^{n+sp}}dy\\
&\ \ \ +C_{n,s,p} \int_{\mathbb{R}^n\setminus{B_{\delta}(x)}}\frac{G(u(x)+\varepsilon_k\Phi(x)-u(y)-\varepsilon_k\Phi(y))-
G(u(x)-u(y))}{|x-y|^{n+sp}}dy\\
&:=I_\delta(x)+J_\delta(x).
\end{split}\end{equation*}
Now we first estimate $I_\delta(x)$. Applying the \emph{mean value theorem} to the function $G(t)=|t|^{p-2}t$,
we derive that there is a constant $C>0$ such that for any two quantities A and B, it holds
\begin{equation}\label{budengshi}
\big||A+B|^{p-2}(A+B)-|A|^{p-2}A\big|\leq C(|A|+|B|)^{p-2}|B|.
\end{equation}
Define
$$v(x):=u(x)+\varepsilon_k\Phi(x).$$
Since $u\in C_{loc}^{1,1}$ and $\Phi\in C_{0}^{\infty}(\mathbb{R}^n)$, by Taylor expansion, we have
$$v(x)-v(y)=\nabla v(x)\cdot (x-y)+O(|y-x|^2).$$
Let
$$A=\nabla v(x)\cdot (x-y)\ \ \text{and}\ \ B=O(|y-x|^2).$$
Then it follows from \eqref{budengshi} that
\begin{equation}\label{zuihouguji}\begin{split}
&\big||v(x)-v(y)|^{p-2}(v(x)-v(y))-|\nabla v(x)\cdot (x-y)|^{p-2}\nabla v(x)\cdot (x-y)\big|\\
&\leq C(|\nabla v(x)\cdot (x-y)|+|x-y|^2)^{p-2}|x-y|^2\\
&\leq C(|\nabla v(x)|^{p-2}|x-y|^p.
\end{split}\end{equation}
Then anti-symmetry of $\nabla v(x)\cdot (x-y)$ for $y\in B_\delta(x)$ implies
\begin{equation}\label{zuihouguji11}
P.V.\int_{B_{\delta}(x)}\frac{|\nabla v(x)\cdot (x-y)|^{p-2}\nabla v(x)\cdot (x-y)}{|x-y|^{n+sp}}dy=0.
\end{equation}
The estimate of the integral of $u$
on $B_\delta(x)$ is similar to the estimate of $v$, hence by virtue of \eqref{zuihouguji} and \eqref{zuihouguji11},
\begin{equation*}\begin{split}
&|I_\delta(x)|=C_{n,s,p} \Big|P.V.\int_{B_{\delta}(x)}\frac{G(u(x)+\varepsilon_k\Phi(x)-u(y)-\varepsilon_k\Phi(y))-
G(u(x)-u(y))}{|x-y|^{n+sp}}dy\Big|\\
& \ \ \ \ \ \ \ \ \ \leq C\big(|\nabla v(x)|^{p-2}+|\nabla u(x)|^{p-2}\big)
\int_{B_{\delta}(x)}\frac{|x-y|^p}{|x-y|^{n+sp}}dy\\
& \ \ \ \ \ \ \ \ \ \leq C\delta^{p(1-s)}.
\end{split}\end{equation*}
For the estimate of $J_\delta$, we have
\begin{equation*}\begin{split}
&J_\delta(x)=C_{n,s,p} \int_{\mathbb{R}^n\setminus{B_{\delta}(x)}}\frac{G(u(x)+\varepsilon_k\Phi(x)-u(y)-\varepsilon_k\Phi(y))-
G(u(x)-u(y))}{|x-y|^{n+sp}}dy\\
& \ \ \ \ \ \ \ =\varepsilon_kC_{n,s,p} \int_{\mathbb{R}^n\setminus{B_{\delta}(x)}}\frac{(\Phi(x)-\Phi(y))Q(x,y)}{|x-y|^{n+sp}}dy,
\end{split}\end{equation*}
where
$$Q(x,y)=(p-1)\big(\Phi(x)-\Phi(y)\big)\int_0^1|u(x)-u(y)+
t\varepsilon_k\big(\Phi(x)-\Phi(y)\big)|^{p-2}dt,$$
and we have used the following identity
$$|b|^{p-2}b-|a|^{p-2}a=(p-1)(b-a)\int_0^1|a+t(b-a)|^{p-2}dt.$$
Since $u$
is the bounded solution of \eqref{fractionalbianjiewai} and $\Phi\in C_0^\infty(\mathbb{R}^n)$, then
$$|J_\delta(x)|\leq \frac{C}{\delta^{sp}}\varepsilon_k:=C_\delta\varepsilon_k.$$

This completes the proof of the lemma.
\end{proof}

\begin{lem}(A comparison principle)\label{lemll}
Let $\Gamma$ be a bounded domain in $\mathbb{R}^n$.
Assume that $u,v\in C^{1,1}_{loc}(\Gamma)\cap\mathcal{L}_{sp}$
be lower semi-continuous on $\bar{\Gamma}$, and satisfy
\begin{equation}\label{comparison1}
\begin{cases}
(-\Delta)_p^s u(x)\geq(-\Delta)_p^s v(x), & x\in \Gamma,\\
u(x)\geq v(x), & x\in\mathbb{R}^n\setminus \Gamma.
\end{cases}\end{equation}
Then
\begin{equation}\label{comparison2}
u(x)\geq v(x), \ \ x\in\Gamma.
\end{equation}
If $u(x)=v(x)$ at some point $x\in\Gamma$, then
$$u(x)=v(x) \ \ \text{almost everywhere in} \ \mathbb{R}^n.$$
\end{lem}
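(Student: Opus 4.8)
\medskip
\noindent\emph{Proof idea (plan).}
The plan is to run a strong minimum principle directly on the singular integral defining $(-\Delta)_p^s$, in the spirit of the proofs of Theorem~\ref{unbonudedthma} and Lemma~\ref{Strongmax}. Put $w:=u-v$; it suffices to show $w\geq0$ in $\Gamma$, which is \eqref{comparison2}. Suppose not. Since $w\geq0$ on $\mathbb{R}^n\setminus\Gamma$, we then have $m:=\inf_{\mathbb{R}^n}w=\inf_{\Gamma}w<0$. The interior regularity $u,v\in C^{1,1}_{loc}(\Gamma)$ makes $w$ continuous in $\Gamma$, so $m$ is approached along a sequence in $\Gamma$; using the compactness of $\overline{\Gamma}$, the semicontinuity of $u$ and $v$ on $\overline{\Gamma}$, and the ordering $u\geq v$ on $\partial\Gamma\subset\mathbb{R}^n\setminus\Gamma$, one rules out that a minimizing sequence escapes to the boundary, so $m$ is attained at some interior point $x_0\in\Gamma$. (Alternatively, to bypass any subtlety near $\partial\Gamma$ one may perturb by a smooth cutoff $\varepsilon_k\Phi_{r_k}$ centered at an approximate minimizer and locate an exact global minimum of $w-\varepsilon_k\Phi_{r_k}$ as in the proof of Theorem~\ref{unbonudedthma}, carrying along the harmless term produced by $(-\Delta)_p^s(\varepsilon_k\Phi_{r_k})$.)

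At such an interior minimum $x_0$, both $(-\Delta)_p^su(x_0)$ and $(-\Delta)_p^sv(x_0)$ are finite because $u,v\in C^{1,1}_{loc}\cap\mathcal{L}_{sp}$, so with $G(t)=|t|^{p-2}t$,
\[
(-\Delta)_p^su(x_0)-(-\Delta)_p^sv(x_0)=C_{n,s,p}\,\mathrm{P.V.}\int_{\mathbb{R}^n}\frac{G\big(u(x_0)-u(y)\big)-G\big(v(x_0)-v(y)\big)}{|x_0-y|^{n+sp}}\,dy.
\]
Since $x_0$ minimizes $w$ over $\mathbb{R}^n$ one has $u(x_0)-u(y)=\big(w(x_0)-w(y)\big)+\big(v(x_0)-v(y)\big)\leq v(x_0)-v(y)$ for every $y$, so by the strict monotonicity of $G$ the integrand is pointwise nonpositive; hence the principal value integral is $\leq0$, i.e.\ $(-\Delta)_p^su(x_0)\leq(-\Delta)_p^sv(x_0)$. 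Together with the hypothesis $(-\Delta)_p^su(x_0)\geq(-\Delta)_p^sv(x_0)$ this gives equality, which forces the nonpositive integrand to vanish for a.e.\ $y$; by injectivity of $G$ we get $w(y)=w(x_0)=m$ for a.e.\ $y\in\mathbb{R}^n$, contradicting $w\geq0$ on the set $\mathbb{R}^n\setminus\Gamma$ of positive measure. Therefore $w\geq0$ in $\Gamma$.

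For the rigidity statement, assume $u(x_1)=v(x_1)$ at some $x_1\in\Gamma$. By what was just proved $w\geq0$ on all of $\mathbb{R}^n$, hence $w(x_1)=0=\min_{\mathbb{R}^n}w$ and $x_1$ is a global minimum lying in the open set $\Gamma$; repeating the computation above at $x_1$ (this is precisely the mechanism of Lemma~\ref{Strongmax}) yields $w\equiv0$ a.e.\ in $\mathbb{R}^n$, i.e.\ $u=v$ a.e. The main obstacle I anticipate is the very first step, namely securing that the negative infimum of $w$ is attained at an \emph{interior} point of $\Gamma$ (equivalently, controlling the boundary behaviour of $w$ under only the semicontinuity hypotheses); once a global minimizer $x_0\in\Gamma$ is in hand, the rest is the routine sign analysis of the singular integral, using only the monotonicity and injectivity of $G$ together with $|\mathbb{R}^n\setminus\Gamma|>0$, exactly as in the arguments already carried out in Section~2.
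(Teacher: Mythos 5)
Your proposal is correct and matches the paper's own proof in every essential respect: the negative interior minimum $x_0$ of $w=u-v$ is secured exactly as you describe (lower semicontinuity on the compact $\overline{\Gamma}$ together with $w\geq0$ on $\partial\Gamma\subset\mathbb{R}^n\setminus\Gamma$ forces the minimizer into $\Gamma$), and the contradiction comes from the same monotonicity of $G(t)=|t|^{p-2}t$, which makes the singular integral for $(-\Delta)_p^su(x_0)-(-\Delta)_p^sv(x_0)$ strictly negative because $w(y)\geq 0>w(x_0)$ on the positive-measure set $\mathbb{R}^n\setminus\Gamma$. Your parenthetical perturbation by $\varepsilon_k\Phi_{r_k}$ is unnecessary here, and your route via forcing equality and then invoking injectivity of $G$ is just a slight rephrasing of the paper's direct strict inequality obtained by restricting the integral to $\mathbb{R}^n\setminus\Gamma$.
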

\begin{proof}
Let
$$w(x)=u(x)-v(x).$$
Suppose \eqref{comparison2} is violated, then since $w$ is lower semi-continuous on $\bar{\Gamma}$, there exists
$x^0$ in $\Gamma$ such that
$$w(x^0)=\min_{\Gamma}w(x)<0.$$
It follows from the second inequality in \eqref{comparison1} that
\begin{equation*}\begin{split}
&(-\Delta)_p^s u(x^0)-(-\Delta)_p^s v(x^0)\\
&=C_{n,s,p} P.V.\int_{\mathbb{R}^n}\frac{G(u(x^0)-u(y))-G(v(x^0)-v(y))}{|x^0-y|^{n+sp}}dy\\
&\leq C_{n,s,p} \int_{\mathbb{R}^n\setminus\Gamma}\frac{G(u(x^0)-u(y))-G(v(x^0)-v(y))}{|x^0-y|^{n+sp}}dy\\
&<0.\\
\end{split}\end{equation*}
This contradicts the first inequality in \eqref{comparison1} and hence \eqref{comparison2} must be true. It follows that
if $w(x^0)=0$ at some point $x^0\in\Gamma$, then
\begin{equation*}\begin{split}
&(-\Delta)_p^s u(x^0)-(-\Delta)_p^s v(x^0)\\
&=C_{n,s,p} P.V.\int_{\mathbb{R}^n}\frac{G(u(x^0)-u(y))-G(v(x^0)-v(y))}{|x^0-y|^{n+sp}}dy\\
&\leq0,\\
\end{split}\end{equation*}
while on the other hand, from the first inequality in \eqref{comparison1}, we should have
$$(-\Delta)_p^s u(x^0)-(-\Delta)_p^s v(x^0)\geq0$$
and hence the integral must be 0. Taking into account that $w$ is already nonnegative, we
derive
$$w(x)=0 \ \ \text{almost everywhere in} \ \mathbb{R}^n.$$

This proves the lemma.
\end{proof}

\medskip

\section*{Acknowledgements}
This work was completed while the first author was
visiting Yeshiva University. He would like to thank the Department of Mathematical Science
at Yeshiva University for the hospitality and the stimulating environment.


\begin{thebibliography}{99}



\bibitem{BCN1} H. Berestycki, L. Caffarelli and L. Nirenberg, {\it Monotonicity for elliptic equations in
an unbounded Lipschitz domain}, Comm. Pure Appl. Math., \textbf{50} (1997), 1089--1112.

\bibitem{BCN2} H. Berestycki, L. Caffarelli and L. Nirenberg, {\it Symmetry for elliptic equations in a
half space}, pp. 27--42 in: Boundary Value Problems for Partial Differential Equations and
Applications, RMA Res. Notes Appl. Math., 29, J. L. Lions et al., eds., Masson, Paris, 1993.

\bibitem{BHM} H. Berestycki, F. Hamel and R. Monneau, {\it One-dimensional symmetry of bounded entire solutions of some elliptic equations},
Duke Math. J., \textbf{103} (2000), 375--396.

\bibitem{BN} H. Berestycki and L. Nirenberg, {\it On the method of moving planes and the sliding method},
Boletim Soc. Brasil. de Mat. Nova Ser., \textbf{22} (1991), 1--37.

\bibitem{BP} I. Birindelli and J. Prajapat, {\it One-dimensional symmetry in the Heisenberg group}, Ann. Scuola Norm. Sup. Pisa Cl. Sci.,
\textbf{30} (2001), 269--284.

\bibitem{BCDS} C. Brandle, E. Colorado, A. de Pablo and U. Sanchez, {\it A concave-convex elliptic problem involving the
fractional Laplacian}, Proc. Roy. Soc. Edinburgh, \textbf{143} (2013) 39--71.

\bibitem{BLS} L. Brasco, E. Lindgren and A. Schikorra, {\it Higher H\"{o}lder regularity for the fractional $p$-Laplacian in the superquadratic case},
 Adv. Math., \textbf{338} (2018), 782--846.

\bibitem{CGS} L. Caffarelli, B. Gidas and J. Spruck, {\it Asymptotic symmetry and local behavior of semilinear elliptic equations with critical Sobolev growth}, Comm. Pure Appl. Math., \textbf{42} (1989), 271--297.

\bibitem{CS} L. Caffarelli and L. Silvestre, {\it An extension problem related to the fractional Laplacian}, Comm. PDE., \textbf{32} (2007), 1245--1260.

\bibitem{CS1} L. Caffarelli and L. Silvestre, {\it Regularity theory for fully nonlinear integro-differential equations}, Comm. Pure Appl. Math., \textbf{62} (2009) 597--638.

\bibitem{CY} S.-Y.A. Chang and P. Yang, {\it On uniqueness of solutions of $n$-th order differential equations in conformal geometry}, Math. Res. Lett., \textbf{4} (1997), 91--102.

\bibitem{CW} H. Chan and J. Wei, {\it Traveling wave solutions for bistable fractional Allen-Cahn equations with a pyramidal front}, J. Differential Equations \textbf{262} (2017), no. 9, 4567--4609

\bibitem{CFY} W. Chen, Y. Fang and R. Yang, {\it Liouville theorems involving the fractional Laplacian on a half space}, Adv. Math., \textbf{274} (2015), 167--198.

\bibitem{CL} W. Chen and C. Li, {\it Classification of solutions of some nonlinear elliptic equations}, Duke Math. J., \textbf{63(3)} (1991), 615--622.

\bibitem{CL1} W. Chen and C. Li, {\it Methods on Nonlinear Elliptic Equations}, AIMS Book Series on Diff. Equa. and Dyn. Sys., Vol. 4, 2010.
\bibitem{CL2} W. Chen and C. Li, {\it Maximum principles for the fractional p-Laplacian and symmetry of solutions}, Adv. Math., \textbf{335} (2018),
735--758.

\bibitem{CLLG} W. Chen, C. Li and G. Li, {\it Symmetry of solutions for nonlinear problems involving fully nonlinear nonlocal operators}, Calc. Var. Partial Differential Equations, \textbf{272} (2017) 4131--4157.

\bibitem{CLL}  W. Chen, C. Li and Y. Li, {\it A direct method of moving planes for fractional Laplacian}, Adv. Math., \textbf{308} (2017) 404--437.

\bibitem{CLL1}  W. Chen, C. Li and Y. Li, {\it A direct blowing-up and rescaling argument on nonlocal elliptic
equations}, Internat. J. Math., \textbf{27} (2016), 1650064, 20 pp.

\bibitem{CLO} W. Chen, C. Li and B. Ou, {\it Classification of solutions for an integral equation}, Comm. Pure Appl. Math., \textbf{59} (2006), 330--343.

\bibitem{CLO1} W. Chen, C. Li and B. Ou, {\it Classification of solutions for a system of integral equations}, Comm. PDE, \textbf{30} (2005), 59--65.

\bibitem{CLZ} W. Chen, Y. Li and R. Zhang, {\it A direct method of moving spheres on fractional order equations}.
J. Funct. Anal., \textbf{272} (2017), 4131--4157.

\bibitem{CZ} W. Chen and J. Zhu, {\it Indefinite fractional elliptic problem and Liouville theorems}, J. Differential Equations,
\textbf{260} (2016) 4758--4785.

\bibitem{DFHQW} W. Dai, Y. Fang, J. Huang, Y. Qin and B. Wang, {\it Regularity and classification of solutions to static Hartree equations involving fractional Laplacians}, Discrete and Continuous Dynamical Systems - A, \textbf{39} (2019), no. 3, 1389--1403.

\bibitem{DFQ} W. Dai, Y. Fang and G. Qin, {\it Classification of positive solutions to fractional order Hartree equations via a direct method of moving planes}, J. Differential Equations, \textbf{265} (2018), no. 5, 2044--2063.

\bibitem{DLL} W. Dai, Z. Liu and G. Lu, {\it Liouville type theorems for PDE and IE systems involving fractional Laplacian on a half space}, Potential Analysis, \textbf{46} (2017), 569--588.

\bibitem{DQ} W. Dai and G. Qin, {\it Classification of nonnegative classical solutions to third-order equations}, Adv. Math., \textbf{328} (2018), 822--857.

\bibitem{De} E. De Giorgi, {\it Sulla differenziabilit\`{a} e l'analiticit\`{a} delle estremali degli integrali
multipli regolari}, Mem. Accad. Sci. Torino. Cl. Sci. Fis. Mat. Nat., \textbf{3} (1957) 25--43.

\bibitem{De2} E. De Giorgi, {\it ``Convergence problems for functionals and operators" in Proceedings of the International
Meeting on Recent Methods in Nonlinear Analysis (Rome, 1978), Pitagora,
Bologna}, 1979, 131--188.

\bibitem{DSV} S. Dipierro, N. Soave and E. Valdinoci, {\it On fractional elliptic equations in Lipschitz sets and epigraphs: regularity, monotonicity and rigidity results}, Math. Ann. \textbf{369} (2017), no. 3-4, 1283--1326.

\bibitem{DGYW} Z. Du, C. Gui, Y. Sire and J. Wei, {\it Layered solutions for a fractional inhomogeneous Allen-Cahn equation},
NoDEA Nonlinear Differential Equations Appl., \textbf{23} (2016), no. 3, Art. 29, 26 pp.

\bibitem{FC} Y. Fang and W. Chen, {\it A Liouville type theorem for poly-harmonic Dirichlet problems in a half space}, Adv. Math., \textbf{229} (2012), 2835--2867.

\bibitem{GNN} B. Gidas, W. Ni and L. Nirenberg, {\it Symmetry of positive solutions of nonlinear elliptic equations in $\mathbb{R}^{n}$}, Mathematical Analysis and Applications, vol. 7a of the book series Advances in Mathematics, Academic Press, New York, 1981.

\bibitem{GNN1} B. Gidas, W. Ni and L. Nirenberg, {\it Symmetry and related properties via maximum principle}, Comm. Math. Phys., \textbf{68} (1979), 209--243.

\bibitem{JL} L. Jin and Y. Li, {\it A Hopf's lemma and the boundary regularity for the fractional p-Laplacian}, Discrete Contin. Dyn. Syst.,
\textbf{39} (2019), 1477--1495.

\bibitem{Lin} C. Lin, {\it A classification of solutions of a conformally invariant fourth order equation in $\mathbb{R}^{n}$}, Comment. Math. Helv.,
\textbf{73} (1998), 206--231.

\bibitem{LD} Z. Liu and W. Dai, {\it A Liouville type theorem for poly-harmonic system with Dirichlet boundary conditions in a half space}, Advanced Nonlinear Studies, \textbf{15} (2015), 117--134.

\bibitem{LZ} Y. Li and M. Zhu, {\it Uniqueness theorems through the method of moving spheres}, Duke Math. J., \textbf{80} (1995), 383--417.

\bibitem{Li} Z. Li, {\it Global H\"{o}lder regularity of bounded solutions for fractional p-Laplacian}, preprint.

\bibitem{LiZ} Z. Li and Q. Zhang, {\it Subsolutions and Hopf's lemma for fractional p-Laplacian}, preprint, arXiv: 1905.00127.

\bibitem{LZ1}  G. Lu and J. Zhu, {\it The axial symmetry and regularity of solutions to an integral equation in a half
space}, Pacific J. Math., \textbf{253} (2011) 455--473.

\bibitem{LZ2}  G. Lu and J. Zhu, {\it Symmetry and regularity of extremals of an integral equation related to the Hardy-
Sobolev inequality}, Calc. Var. Partial Differential Equations, \textbf{42} (2011) 563--577.

\bibitem{LZ3}  G. Lu and J. Zhu, {\it An overdetermined problem in Riesz-potential and fractional Laplacian}, Nonlinear
Anal., \textbf{75} (2012) 3036--3048.

\bibitem{MZ} L. Ma and L. Zhao, {\it Classification of positive solitary solutions of the nonlinear Choquard equation}, Arch. Rational Mech. Anal., \textbf{195} (2010), no. 2, 455--467.

\bibitem{SV} O. Savin and E. Valdinoci, {\it $\Gamma$-convergence for nonlocal phase transitions}. Ann. Inst. H. Poincar\'{e} Anal. NonLin\'{e}aire, \textbf{29}
(2012), 479--500.

\bibitem{S} L. Silvestre, {\it Regularity of the obstacle problem for a fractional power of the Laplace operator}, Comm. Pure Appl. Math., \textbf{60} (2007), 67--112.

\bibitem{Se} J. Serrin, {\it A symmetry problem in potential theory}, Arch. Rational Mech. Anal., \textbf{43} (1971), 304--318.

\bibitem{WX} J. Wei and X. Xu, {\it Classification of solutions of higher order conformally invariant equations}, Math. Ann., \textbf{313} (1999), no. 2, 207--228.

\bibitem{ZCCY}  R. Zhuo, W. Chen, X. Cui and Z. Yuan, {\it Symmetry and non-existence of solutions for a nonlinear system involving the fractional Laplacian}, \, Disc. Cont. Dyn. Sys., \textbf{36} (2016) 1125--1141.

\end{thebibliography}
\end{document}